\def\fg{\mathfrak{g}}
\def\fh{\mathfrak{h}}
\def\fp{\mathfrak{p}}
\def\fk{\mathfrak{k}}
\def\fl{\mathfrak{l}}
\def\fa{\mathfrak{a}}
\def\fx{\mathfrak{x}}
\def\fs{\mathfrak{s}}
\def\fu{\mathfrak{u}}
\def\fL{\mathfrak{L}}
\def\fsp{\mathfrak{sp}}
\def\fsl{\mathfrak{sl}}
\def\RR{\mathbb{R}}
\def\ZZ{\mathbb{Z}}
\def\CC{\mathbb{C}}
\def\sM{\mathscr{M}}
\def\sD{\mathscr{D}}
\def\sE{\mathscr{E}}
\def\sG{\mathscr{G}}
\def\GL{\mathsf{GL}}
\def\SL{\mathsf{SL}}
\def\bC{\mathbf{C}}
\def\PP{\mathbb{P}}
\def\fn{\mathfrak{n}}
\def\fM{\mathfrak{M}}
\def\sK{\mathscr{K}}
\def\sL{\mathscr{L}}
\def\OO{\mathscr{O}}
\DeclareMathOperator{\Mor}{\mathrm{Mor}}
\DeclareMathOperator{\Hom}{\mathrm{Hom}}
\DeclareMathOperator{\Frac}{\mathrm{Frac}}
\DeclareMathOperator{\Aut}{\mathrm{Aut}}
\DeclareMathOperator{\End}{\mathrm{End}}
\DeclareMathOperator{\sym}{\mathrm{sym}}
\DeclareMathOperator{\ev}{\mathrm{ev}}
\DeclareMathOperator{\id}{\mathrm{id}}
\DeclareMathOperator{\Ht}{\mathrm{ht}}
\DeclareMathOperator{\gr}{\mathrm{gr}}
\DeclareMathOperator{\ad}{\mathrm{ad}}
\DeclareMathOperator{\Ad}{\mathrm{Ad}}
\DeclareMathOperator{\MC}{\mathrm{MC}}
\DeclareMathOperator{\Der}{\mathrm{Der}}
\def\u{\underline}
\def\Model{\mathbf{Model}}
\def\Germ{\mathbf{Germ}}
\def\GermSym{\mathbf{GermSym}}
\def\Def{\mathbf{Def}}
\def\bMC{\mathbf{MC}}
\def\Sub{\mathbf{Sub}}
\def\Cartan{\mathbf{Cartan}}
\def\FLA{\mathbf{FLA}}
\def\GLA{\mathbf{GLA}}
\def\DGLA{\mathbf{DGLA}}
\def\Symbol{\mathbf{Symbol}}
\theoremstyle{plain}
\newtheorem{lem}{Lemma}
\newtheorem{pro}{Proposition}
\newtheorem{thm}{Theorem}
\newtheorem{cor}{Corollary}
\theoremstyle{definition}
\newtheorem{defn}{Definition}
\title{Homogeneous models of $C_3$ Monge geometries}
\author{Jan Gutt}
\address{Center for Theoretical Physics, Polish Academy of Sciences, Al. Lotnik\'ow 32/46, 02-668 Warszawa}
\begin{document}
\maketitle
\sloppy
\setcounter{tocdepth}{1}
\tableofcontents

\section{Preliminaries}

\subsection{Introduction} 
\emph{Distributions of Monge type} are a class of strongly regular bracket-generating
distributions introduced by I. Anderson, Zh. Nie and P. Nurowski in~\cite{ANN}. Their symbol algebras
prolong to semisimple graded Lie algebras, thus allowing one to associate a \emph{parabolic geometry}
to any given \emph{Monge distribution}. This article is devoted to the classification problem
for homogeneous models of Monge distributions of type $C_3$ in dimension eight. 
These are rank $3$ distributions
in dimension $8$, whose symbol algebra prolongs to the semisimple Lie algebra $\fsp(6,\RR)$
with a suitable grading. Applying the techniques of parabolic geometry, one associates to
every such distribution $\sD$ a pair of invariants (components of the harmonic curvature): 
a `scalar', and a `quintic'. The
scalar invariant is a section of a natural line bundle; the quintic one is
a section of $S^5\sD^*$ twisted by a natural line bundle. We restrict the classification problem to
at least $2$-transitive homogeneous $C_3$ Monge distributions whose scalar invariant vanishes.
The general classification algorithm, as well as most of its application to the particular
problem, are joint work with Ian Anderson (USU Logan); its formalisation in this
paper is a sole responsibility of the present author.

The classification algorithm we use had been developed by Ian Anderson
and the author in the context of classifying homogeneous models of
strongly regular distributions whose symbol prolongs to a semi-simple Lie algebra $\fg$. 
It relies on a reformulation of the classification problem in terms of
filtered deformations of certain graded subalgebras of $\fg$, at the same time
using parabolic geometry to compute the invariants (harmonic curvature) and
symmetries. Necessary techniques of deformation theory and parabolic geometry have been
implemented in the {\tt DifferentialGeometry} package for {\sc Maple}. 
An in-depth technical
presentation will appear as a separate work (in preparation). Here we do not touch
upon these technical details.

On the other hand, a significant amount of space is devoted to showing that the conceptual
foundations of our
classification method are sound. Since we pass through several reformulations
of a classification problem, it becomes crucial to carefully keep track not only of
the objects being classified, but also of the notion of equivalence. While there
are other ways to do it, we subscribe to the view that a categorical formalisation
is the most convenient and reliable bookkeeping device. Hence we use the categorical
language, perhaps to a somewhat larger extent than typical for this area. We hope that,
as intended,
the reader will find it an aid rather than an obstacle.

The present paper is to be regarded as forming a pair with a parallel development
due to Ian Anderson and Pawel Nurowski~\cite{AN}. The latter work approaches the 
same classification problem, although covering a slightly different area: it does
not restrict to at least $2$-transive models, but, on the other hand, considers
only the case where the quintic invariant is a fifth power (i.e., type $N$ below).
Importantly, Anderson and Nurowski use a very different framework, namely the
method of \emph{Cartan reduction}. 
There, a generic Monge geometry is viewed
as an abstract exterior differential system on a principal bundle; one then
exploits the action of the structure group to normalise the structure functions
(curvatures, torsions), gradually reducing the bundle. As each normalisation step
may introduce a number of branches (corresponding to the orbits of the structure
group in some space of structure coefficients), one eventually obtains a
tree whose leaves carry the various families of homogeneous models; tracing
the path form a leaf to the root, one may label these families by the
normalisation conditions.
Thus, in the course of classifying the homogeneous
models, Anderson and Nurowski produce also the invariants for different classes of models,
as well as explicit Cartan connections with strong normalisations. Furthermore, they
provide realisations of Monge geometries in terms of ordinary differential equaitons,
and identify the structure invariants of a Cartan geometry with the differential invariants of an
ODE --
an aspect we do not touch upon in the present paper. We shall comment further on
the relation between the two papers in the final section.

\subsection{Acknowledgements}
I am immensely grateful to Professor Ian Anderson for a rewarding collaboration
and his hospitality during my stay at USU Logan in Spring 2015. The present article
is to a very large degree based on our joint work. I am likewise indebted to Professor
Pawe\l{} Nurowski for inviting me to join the project, providing an excellent working environment,
and generously sharing the details of his own approach to the problem.
I acknowledge the support of the Polish National Science Centre (NCN)
grant DEC-2013/09/B/ST1/01799.

\subsection{Lie-theoretic setup} 
Throughout the paper, we let $\fg \simeq \fsp(6,\RR)$ be the split real form
of the semisimple Lie algebra of type $C_3$. Fixing a Cartan subalgebra $\fh \subset \fg$,
we have the root system $\Phi \subset \fh^*$, a subset of positive roots $\Phi^+ \subset \Phi$
and simple roots $\Delta \subset \Phi^+$. We use Bourbaki labelling $\Delta = \{ \alpha_1,
\alpha_2,\alpha_3\}$ where $\alpha_3$ is long; let $\varpi_1,\varpi_2,\varpi_3 \in \fh^*$
be the corresponding fundamental weights so that 
$\langle \varpi_i, \alpha_j^\vee\rangle = \delta_{ij}$. The height function
$$ \Ht : \Phi \to \ZZ,\quad \Ht(\alpha) = \langle \varpi_2+\varpi_3,\alpha^\vee \rangle. $$
induces a grading
$\fg_\bullet$ such that
$$ \fg_0 = \fh \oplus \bigoplus_{\Ht(\alpha)=0} \fg_\alpha,
\quad
\fg_i = \bigoplus_{\Ht(\alpha) = i} \fg_\alpha,\
i\neq 0. $$
This turns $\fg$ into a graded Lie algebra
$$
\fg = \fg_{-3} \oplus \fg_{-2} \oplus \fg_{-1} \oplus \fg_0 \oplus \fg_1 \oplus \fg_2 \oplus \fg_3.
$$
We also use the filtration $\fg^\bullet$ where $\fg^i = \bigoplus_{j\ge i}\fg_j$,
with a natural identification $\gr \fg^\bullet = \fg_\bullet$. In particular,
$\fp = \fg^0$ is a parabolic subalgebra, $\fp_+ = \fg^1$ its nilpotent radical,
and $\fg_0$ the Levi factor. 

Since $\fg$ is semi-simple, the grading $\fg_\bullet$ is induced by
a unique $E \in \fh$, called the \emph{grading element}, such that $\fg_i$ is the eigenspace of
$\ad_E$ with eigenvalue $i$. This allows us to induce a compatible grading on
any finite-dimensional $U(\fh)$-module $V$, defining $V_i$ to be the eigenspace
of $E\cdot$ with eigenvalue $i$. Again, we $V^i = \bigoplus_{j\le i} V_j$ so that
$V^\bullet$ is a filtration such that $\gr V^\bullet = V_\bullet$ naturally
(as $U(\fh)$-modules). If $V$ is a representation of $\fg_0$, 
the grading $V_\bullet$ and the filtration $V^\bullet$
are  $\fg_0$-equivariant. If $V$ is a representation
of $\fp$, the filtration $V^\bullet$ is $\fp$-equivariant.

\subsection{Monge distributions} 

We let $\fg_- = \bigoplus_{i<0}\fg_i$,
a graded nilpotent Lie algebra. As is shown in [ANN],
$\fg$ is precisely the
Tanaka prolongation of $\fg_-$. In particular, $\fg_0$
is the Lie algebra of derivations of $\fg_-$; we define
$G_0 = \Aut \fg_-$. Note that $G_0$ acts naturally
on $\fg$, and the action preserves the grading.

The ranks of the relevant graded subspaces are as follows:
$$
\dim \fg_{-3} = 3,\ \dim \fg_{-2} = 2,\ \dim \fg_{-1} = 3,\ \dim \fg_0 = 5.
$$
Following op. cit., we have $G_0$-equivariant identifications
\begin{eqnarray*}
\begin{split}\fg_{-1} &= \fx \oplus \fa \\
\fg_{-2} &= \fx \otimes \fa \\
\fg_{-3} &= \fx \otimes S^2\fa \end{split}
\qquad
\begin{split} \fg_0 &= \End \fx \oplus \End \fa \\
G_0 &= \GL(\fx) \times \GL(\fa)\end{split}
\end{eqnarray*}
where $\fx$ is the root subspace of $-\alpha_3$,
$\fa$ is a rank $2$ abelian subalgebra in $\fg_-$,
and the Lie bracket is given by the natural
$G_0$-equivariant projections.

\begin{defn}
A \emph{$C_3$ Monge distribution} is a strongly regular rank $3$ distribution
on an $8$-dimensional manifold, whose symbol is isomorphic to $\fg_-$.
\end{defn}

\subsection{Associated parabolic geometries} \label{ss:para}

Let $P_+$ be the connected,
simply-connected unipotent Lie group with Lie algebra $\fp_+$, and set
$P = G_0 \ltimes P_+$ with respect to the natural $G_0$-action on $\fp_+\simeq\fg_-^*$.
\begin{pro}
\label{pro:equiv-para}
There is an equivalence of categories between:
\begin{enumerate}
\item the category of $C_3$ Monge distributions and local equivalences,
\item the category of regular, normal parabolic geometries of type $(\fg,P)$,
and local equivalences.
\end{enumerate}
\end{pro}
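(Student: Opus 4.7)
My plan is to deduce the statement by specialising the general equivalence of categories from parabolic geometry: between regular infinitesimal flag structures of type $(\fg,P)$ and regular normal parabolic geometries of the same type (the Čap-Schichl-Morimoto prolongation theorem).

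First I would construct the functor $(1)\to(2)$. A $C_3$ Monge distribution $\sD$ on $M$ canonically determines the weak derived flag $TM = T^{-3}M \supset T^{-2}M \supset T^{-1}M = \sD$; strong regularity means $\gr TM$ is a locally trivial bundle of graded nilpotent Lie algebras with each fibre isomorphic to $\fg_-$. Because $G_0 = \Aut \fg_-$, the graded frame bundle $\sG_0 := \Isom_{\mathrm{gr}}(\fg_-, \gr TM) \to M$ is automatically a principal $G_0$-bundle, and no further reduction of structure group is required: this is precisely a regular infinitesimal flag structure of type $(\fg, P)$. The prolongation theorem then yields, canonically and uniquely up to isomorphism, a regular normal parabolic geometry $(\sG \to M, \omega)$ of type $(\fg, P)$, functorially in local equivalences.

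The technical heart of the argument is the verification of the two algebraic hypotheses of the prolongation theorem: (a) $\fg$ is the Tanaka prolongation of $\fg_-$, and (b) $H^1(\fg_-, \fg)_\ell = 0$ for all $\ell > 0$. Hypothesis (a) is cited in the excerpt from \cite{ANN}. For (b), I would apply Kostant's theorem, which identifies $H^1(\fg_-, \fg)$ as a $\fg_0$-module with a direct sum of irreducibles $V_w$ indexed by the length-one elements $w \in W^\fp$ of the Hasse diagram, together with explicit formulas for the highest weights. In our setting the Levi of $\fp$ is generated by $s_{\alpha_1}$, so $W^\fp$ has exactly two length-one elements, $s_{\alpha_2}$ and $s_{\alpha_3}$; a routine computation using Kostant's formula places the highest weights of the corresponding $V_w$ in homogeneities $\le 0$, establishing (b). This cohomological verification is the only step that requires genuine checking.

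Finally, for the functor $(2) \to (1)$, any regular normal parabolic geometry $(\sG \to M, \omega)$ recovers a distribution $\sD \subset TM$ as the image under $T\pi\colon T\sG \to TM$ of $\omega^{-1}(\fg^{-1})$; regularity ensures $\sD$ has symbol $\fg_-$, so $\sD$ is a $C_3$ Monge distribution. The general theorem then supplies natural isomorphisms showing the two functors are mutually quasi-inverse, and both manifestly respect local equivalences, yielding the claimed equivalence of categories.
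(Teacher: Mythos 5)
Your proposal is correct and follows essentially the same route as the paper: the paper's proof simply invokes Theorem 3.1.14 of \cite{CS} (the prolongation equivalence between regular infinitesimal flag structures and regular normal parabolic geometries), citing \cite{ANN} for the condition $H^1(\fg_-,\fg)^1=0$. The only difference is that you verify this cohomological condition directly via Kostant's theorem (correctly: the two length-one elements $s_{\alpha_2},s_{\alpha_3}$ of $W^\fp$ give components in non-positive homogeneity) and spell out the construction of the underlying $G_0$-structure, both of which the paper delegates to references.
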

\begin{proof}
This follows from Theorem 3.1.14 of~\cite{CS}, where we refer to~\cite{ANN} for
the fact that $H^1(\fg_-,\fg)^1=0$.
\end{proof}
Here a \emph{local equivalence} between a pair of distributions $\sD \subset TM$
and $\sE \subset TN$ is a local diffeomorphism $f:M \to N$
such that $f^{-1}\sE = \sD$. A \emph{parabolic geometry} of type $(\fg,P)$
is a $P$-principal bundle $\pi:\sG \to M$ together with a $\fg$-valued Cartan connection $\omega$
inducing an isomorphism of $\pi^*TM$ with the trivial bundle with fibre $\fg/\fp$.
It is \emph{regular} if its curvature function $\kappa:\sG \to \Lambda^2\fp_+\otimes\fg$,
factors through the degree $1$ subspace for the $\fp$-equivariant filtration induced
by the grading element.
If is \emph{normal} if the curvature function factors through the space of cycles
$Z_2(\fp_+,\fg)$
for the natural codifferential on the chain complex $C_\bullet(\fp_+,\fg)$ computing
Lie algebra homology of $\fp_+$ with values in $\fg$ (using the adjoint representation).
Finally, a \emph{local equivalence} between a pair of parabolic geometries
$(\sG/M, \omega)$ and $(\sG'/N, \omega')$ is a local diffeomorphism
$f:M\to N$ together with a $P$-principal bundle isomorphism
$\tilde f: \sG \to f^*\sG'$ over $M$ such that the induced map $\sG \to \sG'$
pulls back $\omega'$ to $\omega$.

We refer to~\cite{CS} for the necessary background on Cartan and in particular
parabolic geometries. Let us recall that viewing the curvature function
of a regular, normal parabolic geometry as a function
$$ \kappa : \sG \to Z_2(\fp_+,\fg)^1 $$
we may consider its composite with projection to homology: that is the \emph{harmonic
curvature}
$$ \kappa_H : \sG \to H_2(\fp_+,\fg)^1 $$
where the upper index $\cdot^1$ refers to the $\fp$-equivariant filtration
induced by the grading element. The homology space is a completely reducible
representation of $\fp$, so that $\fp_+$ acts trivially, and the action factors through
$\fg_0$. The celebrated theorem of Kostant allows one to find the highest weights of 
the simple $U(\fg_0)$-submodules. One then finds that
$$
H_2(\fp_+,\fg)^1 \simeq \RR[-10,4] \oplus S^5\fa^* \otimes \RR[-3,1]
$$
where $\RR[p,q] \simeq \fx^{-q} \otimes (\det\fa)^{\frac{q-p}{2}}$ 
is a one-dimensional
representation of $G_0$ (provided $p+q$ is even). In particular restricting to the semisimple part
of $G_0$, isomorphic to $\SL(2,\RR)$, the two summands correspond to
scalars and quintic binomials, respectively. Accordingly, the harmonic curvature
$\kappa_H$ decomposes into what we referred to as the scalar and quintic component.

Observe that the harmonic curvature as an $H_2(\fp_+,\fg)^1$-valued
function on $\sG$ factors through the $G_0$-principal bundle $\sG_0 = \sG/P_+$.
Being $P$-equivariant, it may then be viewed as a $G_0$-equivariant
map $\sG_0 \to H_2(\fp_+,\fg)^1$, and thus a section of the associated
vector bundle over $M$. Since $\sG_0$ may be identified with an adapted
frame bundle for $\sD$, it follows that the associated bundle
$\sG_0\times^{G_0} H_2(\fp_+,\fg)^1$ is a certain tensor bundle; thus,
$\kappa_H$ may be viewed as a tensorial invariant defined on $M$.

\subsection{Classification problem} 
We may now state the classification problem in a precise way:
\emph{classify local equivalence classes of Monge $C_3$ distributions such that
(1) the distribution admits a transitive symmetry group
with at least two-dimensional 
isotropy, (2) the scalar component of the harmonic curvature of the corresponding
parabolic geometry of type $(\fg,P)$ vanishes.}

We follow the idea that an equivalence problem (including auto-equivalences, i.e. symmetries)
is naturally organised into a groupoid, i.e. a small
category whose arrows are isomorphisms (without the smallness assumption,
we have a `large groupoid'). 
The classification problem amounts to
describing the set of isomorphism classes of objects, and the latter
is invariant under equivalence of categories. In particular, even if
the category we begin with is large, we may pass to an equivalent
small gropoid.

In the case at hand, we shall set up such framework for a slightly more general problem, namely
that of classifying \emph{all} locally homogeneous Monge $C_3$ distributions 
up to local equivalence.
We begin with the following category of \emph{pointed} locally homogeneous models.
\begin{defn}$\Model$ is the category whose objects are
triples $(M, \sD, m)$ where $\sD \subset TM$ is a $C_3$ Monge distribution
with an \emph{infinitesimally transitive} symmetry algebra, and
$m \in M$ is a point. Morphisms $(M,\sD,m) \to (N,\sE,n)$ are
local diffeomorphisms $f:M \to N$ such that $f^{-1}\sE = \sD$ and $f(m)=n$. 
\end{defn}
Thus defined, $\Model$ is not a (large) groupoid, but we can turn it
into one by formally inverting its morphisms.
Recall that given a category $\bC$, its \emph{localisation} at
the collection $\Mor\bC$ of all morphisms is the large groupoid
$\Frac\bC = \bC[(\Mor\bC)^{-1}]$, together with a functor $\bC \to \Frac\bC$,
satisfying a universal property with respect to functors from $\bC$ to
large groupoids. If $\bC$ has pullbacks,
one may represent the arrows $a \to c$ of $\Frac\bC$ by diagrams 
$a \leftarrow b \rightarrow c$ in $\bC$
(two such diagrams define
the same arrow if they both `factor' a third such diagram).
Note that $\Model$ does have pullbacks, whence
$\Frac\Model$ is simply the category of triples $(M,\sD,m)$
where a morphism to $(N,\sE,n)$ is a diagram $M\leftarrow \tilde M \to N$
of pointed local equivalences. 

It is intuitively clear
that, having inverted pointed local diffeomorphisms, we might as well work
with germs of distribution at a point $o$ of the fixed manifold $\RR^8$.
These are defined in the usual way as equivalence classes $\sD$ of pairs $(U,\sD_U)$ where
$U$ is an open neighbourhood of $o$ and $\sD_U \subset TU$ a distribution. The 
\emph{symmetry algebra} $\sym \sD$ is then understood to be the local symmetry algebra
at $o$ of any representative, viewed as a Lie algebra of germs of vector fields.
An \emph{equivalence} of germs $\sD$ and $\sE$ is a germ at $o$ of a diffeomorphism
$\RR^8 \to \RR^8$ sending $o$ to itself and $\sD$ to $\sE$.

\begin{defn}$\Germ$ is the groupoid whose objects are germs at $o$ of Monge $C_3$
distributions $\sD$ on $\RR^8$ such that the evaluation map $\ev_o : \sym\sD \to T_o\RR^8$
is surjective, and whose morphisms are equivalences as defined above. 
\end{defn} 

\begin{pro}\label{pro:equiv-germ}
There is an equivalence of categories between $\Germ$ and $\Frac\Model$.
\end{pro}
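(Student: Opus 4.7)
The plan is to produce a canonical functor $\Phi: \Model \to \Germ$ which inverts every morphism, lift it via the universal property of $\Frac$ to $\tilde\Phi: \Frac\Model \to \Germ$, and then verify that $\tilde\Phi$ is essentially surjective and fully faithful.

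To define $\Phi$, for each $(M, \sD, m) \in \Model$ I choose once and for all a pointed local diffeomorphism $\chi_M: (M, m) \to (\RR^8, o)$, and set $\Phi(M, \sD, m) = \chi_{M,*}\sD$, viewed as a germ at $o$. This germ inherits an infinitesimally transitive symmetry algebra, so in particular $\ev_o : \sym\, \Phi(M,\sD,m) \to T_o\RR^8$ is surjective. A morphism $f$ is sent to the germ of $\chi_N \circ f \circ \chi_M^{-1}$, which is an equivalence of germs by construction. Since $\Germ$ is a groupoid, $\Phi$ sends every morphism to an isomorphism, so the universal property of $\Frac$ yields $\tilde\Phi: \Frac\Model \to \Germ$; different choices of charts alter $\Phi$ only up to canonical natural isomorphism.

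Essential surjectivity is immediate: a germ $\sD$ at $o$ in $\Germ$ has some representative $\sD_U$ on an open $U \subset \RR^8$; since surjectivity of $\ev_p$ is open in $p$, we may shrink $U$ so that $(U, \sD_U, o) \in \Model$, and then $\Phi(U, \sD_U, o) = \sD$ with $\chi_U$ the inclusion. For fullness, given a germ equivalence $\phi$ from $\Phi(M,\sD,m)$ to $\Phi(N,\sE,n)$, represented by a diffeomorphism $\phi: U \to V$ between neighbourhoods of $o$, shrink so that $\tilde M := \chi_M^{-1}(U) \subset M$ is open and $\chi_N^{-1}\circ\phi\circ\chi_M$ is defined there; then the span $M \leftarrow \tilde M \to N$, with left leg the inclusion and right leg $\chi_N^{-1}\circ\phi\circ\chi_M$, represents a morphism in $\Frac\Model$ whose image under $\tilde\Phi$ is $\phi$.

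Faithfulness is the technical core. Given two spans $M \xleftarrow{p_i} \tilde M_i \xrightarrow{q_i} N$ ($i=1,2$) of pointed local equivalences inducing the same germ equivalence, one has $q_2 \circ p_2^{-1} = q_1 \circ p_1^{-1}$ as germs at $m$. Shrinking to a pointed neighbourhood $\tilde M_3 \subset \tilde M_1$ of the basepoint small enough that $p_2^{-1} \circ p_1$ is defined on $\tilde M_3$, one obtains pointed local equivalences $\tilde M_3 \hookrightarrow \tilde M_1$ and $p_2^{-1}\circ p_1: \tilde M_3 \to \tilde M_2$ that make both span diagrams commute, witnessing the equality of the two spans in $\Frac\Model$. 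The only real delicacy throughout is the bookkeeping of shrinking of domains so that all charts, inverses, and span legs are simultaneously defined, together with the check that all restrictions to open subsets remain objects of $\Model$ --- which is clean precisely because infinitesimal transitivity is an open condition on the base.
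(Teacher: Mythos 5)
Your proof is correct, but it takes a genuinely different route from the paper. The paper constructs an explicit quasi-inverse pair: a functor $F_0:\Germ\to\Model$ that integrates the symmetry algebra $\sym\sD$ to a simply-connected Lie group $K_\sD$ and realises the germ on the homogeneous space $K_\sD/L_\sD$, and the chart functor $G_0:\Model\to\Germ$, after which both composites are identified with the identities up to natural isomorphism. You keep only the chart functor, descend it to $\Frac\Model$ by the universal property, and then check essential surjectivity and full faithfulness directly, working entirely at the level of germs and spans: essential surjectivity rests on the observation that germ symmetries spanning $T_o\RR^8$ are defined, preserve the distribution, and still span on a whole neighbourhood (so a germ object shrinks to an object of $\Model$), and fullness/faithfulness are germ-shrinking and common-refinement arguments. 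This is more elementary --- no integration of the infinitesimal action, no Lie-theoretic input --- but it leans on the span description of $\Frac\Model$ (which the paper justifies via pullbacks in $\Model$ just before the proposition), and you should note explicitly that the shrinking in the fullness and faithfulness steps must also enforce the germ-level \emph{equalities} (of distributions, and of $q_1$ with $q_2\circ p_2^{-1}\circ p_1$) on the whole shrunken domain, not merely definedness of the composites; as you say, this is routine bookkeeping. What the paper's longer route buys is the explicit homogeneous representative $K_\sD/L_\sD$ for each germ, a construction that is reused later (in the embedding of $\GermSym$ into $\FLA$ and in the proof of Proposition \ref{pro:acc}), so within the paper's economy it is not wasted effort; your argument, by contrast, isolates Proposition \ref{pro:equiv-germ} as a purely formal localisation statement independent of any Lie theory.
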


The consequence is that the local equivalence problem for locally homogeneous Monge $C_3$
distributions is encoded by the small groupoid $\Germ$. In particular, the original
classification problem stated in the beginning of this subsection reduces to the
problem of describing isomorphism classes of objects of a suitable sub-groupoid
of $\Germ$ (consisting of germs $\sD$ with 
$\ker\ev_o \ge 2$ and vanishing scalar component of the germ of harmonic curvature at $o$).

\begin{proof}
For each object $\sD$ of $\Germ$,
note that $\dim\sym\sD \le 21$ so that we may, and do, choose a connected, simply-connected
Lie group $K_\sD$ together with a Lie algebra isomorphism $\fk_\sD \to \sym\sD$.
Then, let $L_\sD \subset K_\sD$ be the subgroup whose Lie algebra corresponds to
the isotropy algebra in $\sym\sD$, and such that $K_\sD/L_\sD$ is simply-connected.
The infinitesimal action
of $\sym\sD$ on a small neighbourhood of $o \in \RR^8$
induces a germ of a local diffeomorphism from $K_\sD/L_\sD$
to $\RR^8$, mapping the origin to $o$, and
pulling back $\sD$ to the germ of a $K_\sD$-equivariant
distribution $\tilde\sD$ on $K_\sD/L_\sD$.
It is then straightforward to check that mapping $\sD$
to $(K_\sD/L_\sD, \tilde\sD)$ extends to a functor
$F_0:\Germ \to \Model$. 

In the opposite direction, we choose for each $(M,\sD,m)$ in $\Model$
a diffeomorphism $f : U \to \RR^8$ from an open neighbourhood $U$ of $m \in M$
to an open subset of $\RR^8$ such that $f(m)=o$. Then $\bar\sD=[(f(U), f_*\sD)]$ defines
an object of $\Germ$, and it is again straightforward to check that
it extends to a functor $G_0 : \Model \to \Germ$. 

Let $I : \Model \to \Frac\Model$ be the canonical embedding.
Since $\Germ$ is a groupoid, it follows that $G_0$ factors uniquely through $I$
so that $G_0 = GI$. Set $F = IF_0$.
We now need
too check that the pair $G$, $F$ forms an equivalence of
categories. 
First, we have a natural isomosphism $\id \to G_0F_0 = GF$ whose component at $\sD$
is the germ of a local diffeomorphism $\RR^8 \to \RR^8$ arising as the composite of
germs of diffeomorphisms $\RR^8 \to K_\sD/L_\sD \to \RR^8$ arising in the definition
of $F_0$, $G_0$. 
On the opposite side,
there is a natural isomorphism $\id\to FG$ whose component at
$(M,\sD,m)$ is given by the diagram $M \leftarrow U \to K_{\bar\sD}/L_{\bar\sD}$ 
where the rightmost map is a representative of the
composite of germs at $m$ of local diffeomorphisms
$M\to\RR^8\to K_{\bar\sD}/L_{\bar\sD}$ arising in the definition of $G_0$, $F_0$.
\end{proof}

As a side remark, note that the category of parabolic geometries (of a given type)
and local equivalences is enriched over the category of finite-dimensional
smooth manifolds. Using Proposition \ref{pro:equiv-para},
the same holds for $\Model$ and $\Frac \Model \approx \Germ$, whence
in particular $\Aut\sD$ for $\sD$ in $\Germ$ is naturally
a Lie group: the isotropy group at $o\in\RR^8$. It follows that
the isotropy condition appearing in our original classification problem
corresponds $\dim\Aut\sD \ge 2$. We shall not pursue this interpretation.

\section{From distributions to deformations}
\label{sec:dide}

\subsection{Introduction}
The central idea of our approach to the problem of classification of homogeneous
models is to reformulate it in terms of the deformation theory of filtered Lie algebras.
The way we present it in this section applies to arbitrary strongly regular
distributions whose symbol ($\fg_-$ in our case)
has a finite-dimensional Tanaka prolongation ($\fg$ in our case): indeed, these
are the only properties of $(\fg_-,\fg)$ we shall refer to in what follows.

Our first step will be a pretty standard trick: replace the property of `there exists
a transitive symmetry algebra' with the \emph{datum} of such algebra. That is,
we will consider germs of Monge $C_3$ distributions $\sD$ together with an explicit
transitive algebra of germs of vector fields $\fk \subset \sym\sD$. Now, classifying
pairs $(\sD,\fk)$ up to equivalence yields a parially ordered set rather than a set,
where the order relation reflects inclusions between the algebras. Following the
well-known dictum `a groupoid is the categorification of a set; a category is
the categorification of a poset', we shall organise the corresponding equivalence problem
into a category rather than a groupoid.

\subsection{Symmetries and the symbol}
As a preliminary step, let us recall a basic fact about local symmetry algebras of distributions
(see e.g.~\cite{Kruglikov}). 
Let $\FLA$, resp. $\GLA$, denote the category of finite-dimensional filtered,
resp. graded, Lie algebras and filtration-, resp. grading-preserving homomorphisms. The associated 
graded construction gives a functor $\gr : \FLA \to \GLA$. 
Denote by $\Symbol \subset \GLA$ the subcategory whose objects are
negatively graded algebras, generated in degree $-1$, with \emph{finite-dimensional Tanaka prolongation},
and whose morphisms are graded Lie algebra isomorphisms.
We then have
the Tanaka prolongation functor
$$ \Pr : \Symbol \to \GLA $$
together with a natural monomorphism $\id \to \Pr$ (of functors $\Symbol\to\GLA$).

Given a germ of a bracket-generating distribution $\sD$ at $o \in \RR^8$, recall
that $\sD$ induces a germ of a filtration on the tangent bundle,
and thus in particular a filtration $T_o^\bullet\RR^8$ on the tangent space at $o$.
It is the weakest filtration compatible with the Lie bracket
of vector fields, and such that its degree $-1$, resp. $0$, sub-bundle is
precisely $\sD$, resp. zero. It follows that the Lie bracket turns the
associated graded $\gr T_o\RR^8$ into a graded nilpotent Lie algebra $\sigma_\bullet(\sD)$.
This is easily seen to produce a functor
$$ \sigma : \Germ \to \Symbol. $$
\begin{lem}
The assignment $\sD \mapsto \sym\sD$ extends naturally to a functor $\sym : \Germ \to \FLA$
together with a natural monomorphism $\sigma \to \gr\sym \to \Pr\sigma$ of functors
$\Germ \to \GLA$. 
\end{lem}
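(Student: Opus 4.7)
The plan is in three steps: (1) promote $\sym\sD$ to a filtered Lie algebra and verify functoriality; (2) identify the negative part of $\gr\sym\sD$ with $\sigma\sD$; (3) embed $\gr\sym\sD$ into $\Pr\sigma\sD$ via the universal property of Tanaka prolongation. The first two steps are essentially bookkeeping; the third is where the real content lies.

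For step (1): given $\sD \in \Germ$, the tangent filtration $T_o^\bullet\RR^8$ induces, via weighted Taylor expansion in any adapted coordinate system, a decreasing bracket-compatible filtration $\mathfrak{X}_o^\bullet$ on germs of vector fields at $o$, which is independent of the choice of adapted coordinates (cf.~\cite{Kruglikov}). Setting $\sym^i\sD := \sym\sD \cap \mathfrak{X}_o^i$ makes $\sym\sD$ a filtered Lie algebra; finite-dimensionality will follow a posteriori from step~(3). A morphism $f:(\sD,o)\to(\sE,o)$ in $\Germ$ preserves the distributions and hence the induced tangent filtrations, so its pushforward $f_*$ is filtration-preserving, yielding a well-defined functor $\sym:\Germ\to\FLA$.

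For step (2): for each $i<0$, the evaluation $\sym^i\sD\to T_o^i\RR^8$, $X\mapsto X_o$, is surjective by the infinitesimal transitivity built into the definition of $\Germ$, and has kernel $\sym^{i+1}\sD$ by construction of the filtration. It descends to a natural isomorphism $\gr_i\sym\sD \cong \sigma_i\sD$; compatibility with brackets is immediate from the corresponding compatibility on $\gr T_o\RR^8$. Assembling degrees, we obtain a natural graded Lie algebra inclusion $\sigma\sD\hookrightarrow \gr\sym\sD$ onto the negative part.

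For step (3): by step (2), $\gr\sym\sD$ is a graded Lie algebra whose negative part is canonically $\sigma\sD$. Tanaka's universal property will furnish the required embedding $\gr\sym\sD \hookrightarrow \Pr\sigma\sD$ extending $\id_{\sigma\sD}$ provided one checks faithfulness: an element of $\gr_{\ge0}\sym\sD$ whose bracket against $\sigma\sD$ vanishes must itself vanish. Concretely, if $X\in\sym^i\sD$ with $i\ge 0$ satisfies $[X,\sym^j\sD]\subset\sym^{i+j+1}\sD$ for every $j<0$, then transitivity (step~(2)) provides, for each such $j$, a family of elements of $\sym^j\sD$ whose values at $o$ span $T_o^j\RR^8$ modulo $T_o^{j+1}\RR^8$; a standard weighted-jet computation then forces the leading weighted jet of $X$ at $o$ to vanish, i.e.\ $X\in\sym^{i+1}\sD$. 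Given faithfulness, the universal property delivers the unique grading-preserving monomorphism $\gr\sym\sD \hookrightarrow \Pr\sigma\sD$ extending the identity on $\sigma\sD$, and naturality in $\sD$ follows from the uniqueness clause. The composite $\sigma\sD\hookrightarrow\gr\sym\sD\hookrightarrow\Pr\sigma\sD$ is then automatically the canonical inclusion $\id\to\Pr$ of the previous discussion. The main obstacle is the faithfulness check, which is the classical heart of Tanaka's theorem transplanted into this geometric setting; all other verifications are routine.
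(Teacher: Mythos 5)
Your proposal reaches the right statement but by a genuinely different route from the paper, and one step you label as bookkeeping is not. The paper never introduces the weighted-order filtration on germs of vector fields: it defines the filtration on $\sym\sD$ intrinsically, by $\sym^i\sD=\ev_o^{-1}T^i_o\RR^8$ for $i<0$ and, for $i\ge0$, by requiring the $i$-fold brackets with $\sD$ to vanish at $o$. With that definition the identification of the negative part of $\gr\sym\sD$ with $\sigma(\sD)$ is immediate, the map $\gr\sym\sD\to\Pr\sigma(\sD)$ comes from the universal property applied to an arbitrary non-negative graded extension of the symbol (injectivity is \emph{not} assumed up front), and injectivity in degrees $i\ge0$ is then a short diagram chase: both $\gr_i\sym\sD$ and $\Pr\sigma(\sD)_i$ map to $\Hom(\bigotimes^{i+1}\sD_o,\sD_o)$, the right-hand map is injective by the defining property of the prolongation, and vanishing of the left-hand map means $X\in\sym^{i+1}\sD$ literally by the definition of the filtration. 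No nilpotent approximation or weighted jets are needed; your route instead follows the classical Tanaka--Kruglikov argument, which is what the cited reference does, at the cost of importing that machinery.

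Within your route there is one concrete problem: in step (2) you claim the kernel of $\ev_o:\sym^i\sD\to T^i_o/T^{i+1}_o$ is $\sym^{i+1}\sD$ ``by construction''. For the weighted-order filtration $\sym^i\sD=\sym\sD\cap\mathfrak{X}^i_o$ this is false as a matter of construction: a general germ with value at $o$ in $T^{i+1}_o$ need not have weighted order $\ge i+1$ (e.g.\ a field $x_b\,\partial_{x_a}$ in adapted coordinates with weights $w_b<w_a$ vanishes at $o$ yet has negative weighted order $w_b-w_a$). The statement is true for \emph{symmetries}, but only because the leading weighted-homogeneous part of a symmetry is an infinitesimal symmetry of the graded nilpotent model, and in negative degrees those are exactly $\fg_-$, hence determined by their values at the origin --- i.e.\ step (2) requires the same leading-part argument you defer to step (3), not mere bookkeeping. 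Similarly, deducing finite-dimensionality ``a posteriori from step (3)'' needs separatedness of your filtration (a symmetry of infinite weighted order vanishes), which is again a Tanaka-theoretic input rather than a formal consequence. With these repairs your argument is correct; the trade-off is that the paper's choice of filtration makes the lemma essentially formal, while yours aligns with the standard literature but concentrates all the analytic content in the weighted-jet steps you only sketch.
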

\begin{proof}
Let $\sD$ be an object of $\Germ$.
We first need to exhibit the filtration on $\sym\sD$. 
We set $\sym^i\sD = \ev_o^{-1} T_o^i\RR^8$ for $i<0$, and then 
let
$$
\sym^i\sD = \{ X \in \sym\sD\ |\ \ev_o [\sD,[\cdots[\sD,X]]]=0\ \textrm{($i$ copies)} \}
$$
for $i\ge0$. It is straightforward to check functoriality. The inclusion
$\sigma(\sD) \to \gr\sym\sD$ is clear by construction: indeed, the symbol
is identified with the negative part of $\gr\sym\sD$. Then the homomorphism
$\gr\sym\sD \to \Pr\sigma(\sD)$ arises from the universal property of Tanaka
prolongation with respect to graded Lie algebras extending the symbol in 
non-negative degrees. Naturality of the two maps is again straightforward to check.

It remains to verify injectivity of $\gr_i\sym\sD \to \Pr\sigma(\sD)_i$ for all $i$.
Again, for $i<0$ this follows directly from the construciton.
Letting $\sD_o$ be the fibre of $\sD$ at $o \in \RR^8$,
we then have for each $i\ge0$ a commutative diagram
$$\begin{diagram}
\node{\sym^i\sD} \arrow{e}\arrow{se} \node{\gr_i\sym\sD} \arrow{s} \arrow{e} \node{\Pr\sigma(\sD)_i} \arrow{s} \\
\node{} \node{\Hom(\bigotimes^{i+1} \sD_o, \sD_o)}\arrow{e,t}{\simeq} 
\node{\Hom(\bigotimes^{i+1} \sigma_{-1}(\sD), \sigma_{-1}(\sD))}
\end{diagram}
$$
where the vertical arrows are induced by the Lie bracket of vector fields (on the left),
resp. the adjoint representation (on the right). By the defining properties of the Tanaka
prolongation, the right vertical arrow is injective, so that it is enough to check
that the left one is. Suppose thus $X \in \sym^i\sD$ is such that $\bar X \in \gr_i\sym\sD$
is mapped to $0:\bigotimes^{i+1}\sD_o \to \sD_o$; then $X \in \sym^{i+1}\sD$ by definition
of the filtration,
whence $\bar X=0$. 
\end{proof}

\subsection{Distributions and deformations}
As we have indicated in the introductory subsection, we shall proceed by
adding a transitive Lie algebra of symmetries (not necessarily the maximal one)
as a datum in the classification problem.
\begin{defn}
$\GermSym$ is the category whose objects are pairs $(\sD, \fk)$ such that
$\sD$ is an object of $\Germ$, and $\fk \subset \sym\sD$ is a Lie subalgebra
such that $\ev_o : \fk \to T_o\RR^8$ is surjective; its morphisms from
$(\sD,\fk)$ to $(\sD', \fk')$ are those morphisms from $\sD$ to $\sD'$ in $\Germ$
whose underlying germ of a diffeomorphism $\RR^8 \to \RR^8$ maps
$\fk$ into $\fk'$.
\end{defn}

\begin{lem}\label{lem:germsym}
The obvious forgetful functor $\GermSym \to \Germ$ admits a full and faithful right adjoint.
\end{lem}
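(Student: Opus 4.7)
The plan is to construct the right adjoint explicitly as $R(\sD) = (\sD, \sym \sD)$, observe that $U \circ R = \id_\Germ$ on the nose, and then harvest both the adjunction and its fully faithful character from the tautological remark that any morphism in $\Germ$ carries the maximal symmetry algebra of the source into that of the target.

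First, I would check that $R$ is a well-defined functor $\Germ \to \GermSym$. On objects, $(\sD, \sym\sD)$ lies in $\GermSym$ because $\ev_o : \sym\sD \to T_o\RR^8$ is surjective by the very definition of $\Germ$. On morphisms, a germ of equivalence $\phi : \sD \to \sD'$ acts on germs of vector fields at $o$ by conjugation, and the identity $\phi_*\sD = \sD'$ upgrades this to a Lie algebra isomorphism $\sym\sD \to \sym\sD'$. Hence $\phi$ lifts uniquely to a morphism $R(\sD) \to R(\sD')$ in $\GermSym$, and composition is respected.

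Next, I would unravel the adjunction. By definition, a morphism $(\sD, \fk) \to R(\sD') = (\sD', \sym\sD')$ in $\GermSym$ is a morphism $\phi : \sD \to \sD'$ in $\Germ$ whose underlying diffeomorphism germ sends $\fk$ into $\sym\sD'$. Since $\fk \subseteq \sym\sD$ and $\phi$ already carries $\sym\sD$ into $\sym\sD'$ by the previous step, this inclusion is automatic. Thus the forgetful map
\[
\Hom_{\GermSym}\bigl((\sD, \fk), R(\sD')\bigr) \longrightarrow \Hom_\Germ(\sD, \sD')
\]
is a bijection, manifestly natural in both arguments; this is the desired unit-counit adjunction $U \dashv R$.

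Finally, since $U \circ R = \id_\Germ$, the counit $\epsilon : UR \to \id_\Germ$ is the identity natural transformation, hence an isomorphism, and by the standard criterion that a right adjoint is fully faithful iff its counit is an isomorphism, $R$ is fully faithful. The only substantive check in the whole argument is the functoriality of $\sym$ along morphisms of $\Germ$, which amounts to the routine observation that conjugation by a germ of a local diffeomorphism intertwines the local symmetries of source and target distributions; I do not expect this, or any other step, to pose a genuine obstacle.
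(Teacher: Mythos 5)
Your proposal is correct and follows essentially the same route as the paper: define the right adjoint by $\sD \mapsto (\sD,\sym\sD)$ (functorial because equivalences in $\Germ$ conjugate symmetry algebras), note that morphisms $(\sD,\fk)\to(\sD',\sym\sD')$ in $\GermSym$ are exactly morphisms $\sD\to\sD'$ in $\Germ$ since $\fk\subset\sym\sD$ makes the compatibility automatic, and deduce full faithfulness — the paper does this by specialising to $\fk=\sym\sD$, which is equivalent to your counit-isomorphism argument.
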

\begin{proof}
The right adjoint sends $\sD$ in $\Germ$ to $(\sD, \sym\sD)$ in $\GermSym$,
and its action on morphisms is given by functoriality of $\sym$. 
Indeed, morphisms $(\sD, \fk) \to (\sD',\sym\sD')$ in $\GermSym$
are the same as morphisms $\sD \to \sD'$ in $\Germ$. 
Taking $\fk = \sym\sD$, we also find that this right adjoint is
full and faithful. 
\end{proof}

The adjunction $\Germ \rightleftarrows \GermSym$ turns $\Germ$
into a \emph{reflexive} subcategory of $\GermSym$. In particular,
we have the composite functor $S : \GermSym \to \GermSym$, sending
$(\sD,\fk)$ to $(\sD, \sym\sD)$. Then
$\Germ$ is precisely the full subcategory 
consisting of objects preserved by $S$; obviously that is merely 
an elaborate way to say $\fk = \sym\sD$. This way we have embedded
the groupoid controlling our classification problem into a larger (but still small!)
category. Our strategy now will be to describe the set of isomorphism
classes of objects of $\GermSym$, and only later to check which ones
are preserved by $S$. The main reason for admitting this seemingly spurious
wealth of objects is that we may now completely forget about distributions!

\begin{lem}
The obvious forgetful functor $\GermSym \to \FLA$ is a full and faithful
embedding. Its essential image is the full subcategory of filtered Lie algebras
$\fk$ admitting a graded Lie algebra monomorphism $\gr\fk \to \fg$ whose image contains $\fg_-$.
\end{lem}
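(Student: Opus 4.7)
Suppose $f_1, f_2 \colon (\sD,\fk) \to (\sD',\fk')$ are two $\GermSym$-morphisms whose induced filtered homomorphisms $\fk \to \fk'$ coincide; then $g = f_2^{-1} \circ f_1$ is a germ fixing $o$ with $g_*X = X$ for all $X \in \fk$. Choosing $X_1,\dots,X_8 \in \fk$ whose evaluations form a basis of $T_o\RR^8$, the parametrisation $(t_1,\dots,t_8) \mapsto \exp(t_1 X_1)\cdots\exp(t_8 X_8)\cdot o$ is a local diffeomorphism at the origin, and $g$ fixes every point in its image (since $g$ commutes with each flow $\exp(tX_i)$), hence $g = \id$ as a germ.

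\textbf{Fullness.} Conversely, given a filtered homomorphism $\phi\colon \fk \to \fk'$, define $f$ by the same recipe in reverse: send $\exp(t_1 X_1)\cdots\exp(t_8 X_8)\cdot o$ to $\exp(t_1\phi(X_1))\cdots\exp(t_8\phi(X_8))\cdot o$. Since $\phi$ preserves the filtration and, under the identifications $\fk/\fk^0 \simeq T_o\RR^8 \simeq \fk'/\fk'^0$ via $\ev_o$, induces a bijection on the negative graded pieces, the $\phi(X_i)$ also evaluate to a basis of $T_o\RR^8$, so the right-hand side is a local diffeomorphism as well. A direct calculation with flow commutators gives $f_*X = \phi(X)$ for $X \in \fk$, and $f_*\sD = \sD'$ follows since both distributions are recovered from $\fk^{-1}$, resp.\ $\fk'^{-1}$, via $\ev_o$.

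\textbf{Essential image.} For $(\sD,\fk) \in \GermSym$ the preceding lemma supplies a natural monomorphism $\gr\sym\sD \hookrightarrow \Pr\sigma(\sD)\simeq\fg$; restricting to $\gr\fk$ produces a graded embedding whose image contains $\fg_-=\sigma(\sD)$, since transitivity of $\fk$ forces $\gr_i\fk = \gr_i\sym\sD = \fg_i$ for $i<0$. For the converse, given a filtered Lie algebra $\fk$ with $\iota\colon\gr\fk \hookrightarrow \fg$ and $\fg_- \subseteq \iota(\gr\fk)$, form the connected, simply connected Lie group $K$ with Lie algebra $\fk$ and the connected subgroup $L \subset K$ integrating $\fk^0$; equip $K/L$ with the $K$-invariant distribution $\sD$ whose fibre at $eL$ is $\fk^{-1}/\fk^0$, and transport the germ to $\RR^8$ by any pointed chart. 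The critical step, and the main obstacle of the argument, is to verify that the $\fk$-action on $K/L$ is effective, equivalently that $\fk^0$ contains no non-zero ideal of $\fk$. Given any ideal $I \subseteq \fk^0$, the associated graded $\gr I$ is a graded ideal of $\gr\fk$ lying in non-negative degrees; the inclusion $[\fg_{-1},\iota(\gr I)_k]\subseteq \iota(\gr I)_{k-1}$ together with an induction on $k\geq 0$ shows that each $\iota(\gr I)_k$ annihilates $\fg_{-1}$, and the defining property of $\fg$ as the Tanaka prolongation of $\fg_-$ --- injectivity of $[\,\cdot\,,\fg_{-1}]\colon \fg_k \to \Hom(\fg_{-1},\fg_{k-1})$ for every $k\geq 0$ --- then forces $\gr I = 0$ and hence $I = 0$. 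Once effectiveness is established, one checks routinely that the filtered Lie algebra attached to $(\sD,\fk)$ recovers the prescribed $\fk$.
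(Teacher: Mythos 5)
Your faithfulness argument, and both directions of the essential-image characterisation, are sound, and your route is close in spirit to the paper's: the converse construction (simply connected $K$, subgroup $K^0$ integrating $\fk^0$, the invariant distribution on $K/K^0$ with fibre $\fk^{-1}/\fk^0$, transported to $\RR^8$ by a pointed chart) is exactly the paper's essential-inverse functor, while you replace the paper's purely functorial bookkeeping ($AB\simeq\id$, $BA\simeq\id$) by direct flow arguments for full faithfulness. In one respect you are more careful than the paper: the verification that $\fk^0$ contains no nonzero ideal of $\fk$, by descending induction using injectivity of $\fg_k\to\Hom(\fg_{-1},\fg_{k-1})$ for $k\ge0$, is precisely what is needed for $\fk$ to inject into vector fields on $K/K^0$, so that the object you build really has $\fk$ (and not a quotient of it) as its attached filtered algebra; the paper compresses this into ``since $\fk$ is in $\bC$, it follows that $(K/K^0,\sD_\fk)$ is an object of $\Model$''.

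The genuine gap is in your fullness step: you assert that an arbitrary filtered Lie algebra homomorphism $\phi\colon\fk\to\fk'$ ``induces a bijection on the negative graded pieces''. Nothing in the hypotheses gives this. Filtration-preservation only yields a graded homomorphism $\gr\phi$ whose negative part is some graded endomorphism of $\fg_-$, and this can be degenerate: the zero homomorphism $\fk\to\fk'$ is a morphism of $\FLA$ between objects of the essential image, and so is, e.g., the graded endomorphism of $\fg_-$ that is the identity on $\fx$ and kills $\fa\oplus\fg_{-2}\oplus\fg_{-3}$. For such $\phi$ the elements $\phi(X_i)$ do not evaluate to a basis of $T_o\RR^8$, your recipe does not produce a local diffeomorphism, and no morphism of $\GermSym$ can induce $\phi$ at all (pushforward along a germ of a diffeomorphism is injective). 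Your subsequent claim $f_*\sD=\sD'$ has the same dependence: agreement of the fibres at $o$ is not enough, one needs invariance under the transitive algebra $\phi(\fk)$, so again bijectivity on the symbol. Thus your argument proves fullness only for filtered homomorphisms whose associated graded restricts to an automorphism of $\fg_-$ (in particular for all isomorphisms) -- which, to be fair, is also all the paper's proof delivers, since its inverse functor is never described on morphisms like the zero map; but in a blind proof this restriction must be stated rather than silently assumed. A smaller point: ``a direct calculation with flow commutators gives $f_*X=\phi(X)$'' is quicker to justify cleanly by integrating the graph subalgebra $\{(X,\phi(X))\}\subset\fk\times\fk'$ acting on $\RR^8\times\RR^8$ and identifying its orbit through $(o,o)$ with the graph of $f$.
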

\begin{proof}
Let $\bC \subset \FLA$ denote the full subcategory described in the statement
of the Lemma. Clearly, the forgetful functor factors through $A : \GermSym \to \bC$. We shall
construct an essential inverse $B:\bC \to \GermSym$. First, for each
$\fk$ in $\bC$ choose a connected, simply connected Lie group $K$ together with
an identification of its Lie algebra with $\fk$. Let $K^0 \subset K$ be
the subgroup with Lie sub-algebra $\fk^0$ such that $K/K^0$ is simply-connected.
Let $\sD_\fk$ be the $K$-invariant distribution on $K/K^0$ corresponding
to $\fk^{-1}/\fk^0$. Since $\fk$ is in $\bC$, it follows that $(K/K^0, \sD_\fk)$
is an object of $\Model$. Choose a germ $f_\fk$ of a diffeomorphism $K/K^0 \to \RR^8$
sending the origin to $o$ (cf. the construction of $G_0$ in the proof of Proposition
\ref{pro:equiv-germ}). Finally let $B(\fk) = (f_{\fk*}\sD_\fk, f_{\fk*}\fk)$
where $\fk$ is viewed as a Lie algebra of vector fields on $K/K^0$.
It is straightforward to check that $\fk \to B(\fk)$
extends to a functor. Now, $AB\simeq\id_{\bC}$ by construction.
On the other hand, the natural isomorphism $BA \simeq \id_{\GermSym}$
is given on $(\sD,\fk)$ by the germ of a diffeomorphism
$K/K^0 \to \RR^8$ integrating the $\fk$-action on both sides.
\end{proof}

We apply the well-known trick once again and replace the property `there exists...'
with explicit data.
\begin{defn}
$\Def$ is the category whose objects are pairs $(\fk, \iota)$
where $\fk$ is a filtered Lie algebra, and $\iota : \gr\fk \to \fg$ is
a graded Lie algebra monomorphism such that $\fg_- \subset \iota(\gr\fk)$.
Its morphisms from $(\fk,\iota)$ to $(\fk',\iota')$ are
pairs $(\varphi,g)$ where
$\varphi:\fk\to\fk'$ is a filtered Lie algebra homomorphism,
and $g \in G_0$ an element such that
$\iota'\circ\gr\varphi = \iota \circ \Ad_g$.
\end{defn}

\begin{lem}\label{lem:g-from-phi}
Let $(\fk,\iota)$ and $(\fk',\iota')$ be objects of $\Def$.
Then for each filtered Lie algebra homomorphism $\varphi:\fk\to\fk'$
there exists a unique $g\in G_0$ such that $(\varphi,g)$ is
a morphism $(\fk,\iota) \to (\fk',\iota')$ in $\Def$.
\end{lem}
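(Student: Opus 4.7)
The plan is to exploit the rigidity of $\iota$ in strictly negative degrees — where the condition $\fg_-\subset\iota(\gr\fk)$ forces $\iota$ to be an isomorphism — to pin $g$ down from the negative-degree part of the equation, and then to propagate the equation to non-negative degrees via the universal property of the Tanaka prolongation.

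First I would note that for any object $(\fk,\iota)$ of $\Def$, the graded monomorphism $\iota$ restricts to a graded Lie algebra isomorphism $\iota_-:\gr\fk_{<0}\xrightarrow{\sim}\fg_-$: gradedness gives $\iota(\gr\fk_{<0})\subset\fg_-$, while the reverse inclusion is the hypothesis $\fg_-\subset\iota(\gr\fk)$ combined with injectivity of $\iota$. The same applies to $\iota'_-$. Reading the required equation $\iota'\circ\gr\varphi=\iota\circ\Ad_g$ as an identity of maps $\gr\fk\to\fg$ and restricting to $\gr\fk_{<0}$ forces
$$ \Ad_g|_{\fg_-} \;=\; \iota'_-\circ(\gr\varphi)|_{\gr\fk_{<0}}\circ\iota_-^{-1}, $$
an identity of endomorphisms of $\fg_-$. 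Since $G_0=\Aut\fg_-$ acts faithfully on $\fg_-$ (indeed is defined by its action), this formula determines $g$ uniquely, while the right-hand side — being a composition of graded Lie algebra maps which becomes a bijection through the isomorphisms $\iota_-,\iota'_-$ — is an automorphism of $\fg_-$, so it does arise from a unique element $g\in G_0$.

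The step I expect to be the principal obstacle is verifying that the $g$ obtained in this way satisfies the equation not only in strictly negative degree but on all of $\gr\fk$. I plan to argue by induction on the degree $j\ge 0$. Both sides of the equation are graded Lie algebra homomorphisms $\gr\fk\to\fg$, so for $x\in\gr\fk_j$ and $y\in\gr\fk_{-1}$ the element $z:=\Ad_g(\iota(x))-\iota'(\gr\varphi(x))\in\fg_j$ satisfies $[z,\iota(y)]=0$, as seen by expanding each side of the equation on the bracket $[x,y]\in\gr\fk_{j-1}$ and invoking the inductive hypothesis (the base case $j=0$ being special only in that both sides already agree on $\gr\fk_{<0}$). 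Thus $z$ centralises $\iota(\gr\fk_{-1})=\fg_{-1}$; since $\fg_-$ is generated in degree $-1$, $z$ centralises all of $\fg_-$, and the defining property of the Tanaka prolongation — faithfulness of the adjoint action of $\fg_{\ge 0}$ on $\fg_-$ — then forces $z=0$. This closes the induction and establishes the equation on all of $\gr\fk$, completing the proof.
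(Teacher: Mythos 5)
Your proposal is correct and follows essentially the same route as the paper: $g$ is defined from the induced automorphism of $\fg_-$ via $\iota_-$ and $\iota'_-$, uniqueness comes from faithfulness of $G_0=\Aut\fg_-$ acting on $\fg_-$, and your degree-by-degree induction (generation of $\fg_-$ in degree $-1$ plus injectivity of $\ad$ on the non-negative prolongation components) is simply an explicit unwinding of the step the paper dispatches by citing the universal property of the Tanaka prolongation. The one soft spot is shared with the paper: both treat the invertibility of $\iota'_-\circ(\gr\varphi)|_{<0}\circ\iota_-^{-1}$ as automatic (your parenthetical about the outer isomorphisms does not by itself give this), i.e.\ that $\gr\varphi$ is injective in negative degrees.
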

\begin{proof}
Note that $\gr\varphi$ defines a graded Lie algebra automorphism
$$
\fg_- \xrightarrow{\iota^{-1}} \fk_- \xrightarrow{\gr\varphi} \fk'_-
\xrightarrow{\iota'} \fg_-
$$
and thus an element $g \in G_0$. By the universal property of Tanaka prolongation,
we then have $\iota' \circ\gr\varphi = \iota\circ\Ad_g$. On the other hand, every
element $g \in G_0$ satisfying the latter equation induces the same automorphism
of $\fg_-$, whence uniqueness.
\end{proof}

\begin{lem}\label{lem:gs-def}
The obvious forgetful functor $\Def \to \FLA$
is a full and faithful embedding onto the essential image of
the other obvious forgetful functor $\GermSym \to \FLA$.
As a consequence, there is an equivalence of categories between
$\Def$ and $\GermSym$.
\end{lem}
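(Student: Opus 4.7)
The plan is to break the statement into three assertions: the forgetful functor $\Def \to \FLA$ is (i) faithful, (ii) full, and (iii) its essential image coincides with that of $\GermSym \to \FLA$; the claimed equivalence $\Def \simeq \GermSym$ then follows formally.

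For faithfulness, I would observe that two morphisms $(\varphi, g), (\varphi, g')$ in $\Def$ with the same underlying filtered homomorphism $\varphi$ must both satisfy $\iota' \circ \gr\varphi = \iota \circ \Ad_g = \iota \circ \Ad_{g'}$. The uniqueness clause of Lemma \ref{lem:g-from-phi} then forces $g = g'$. Fullness is the existence clause of the very same lemma: any filtered Lie algebra homomorphism $\varphi : \fk \to \fk'$ between underlying algebras of objects of $\Def$ lifts to a (unique) morphism $(\varphi, g)$ in $\Def$.

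For the essential image, I would argue by unraveling definitions: an object $\fk \in \FLA$ lies in the essential image of $\Def \to \FLA$ precisely when there exists a graded Lie algebra monomorphism $\iota : \gr\fk \to \fg$ with $\fg_- \subset \iota(\gr\fk)$. By the previous lemma, this is exactly the condition characterising the essential image of $\GermSym \to \FLA$.

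Finally, to deduce the equivalence $\Def \simeq \GermSym$: both forgetful functors are fully faithful embeddings into $\FLA$ with the same essential image, so each factors as an equivalence onto the same full subcategory $\bC \subset \FLA$. Composing one equivalence with the quasi-inverse of the other yields $\Def \simeq \GermSym$. I do not anticipate any serious obstacle; the only subtle point is keeping the roles of the two lemmas straight, i.e.\ using Lemma \ref{lem:g-from-phi} for fully-faithfulness and the preceding lemma for the identification of essential images.
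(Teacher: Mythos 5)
Your proposal is correct and follows essentially the same route as the paper: the paper also reduces everything to Lemma~\ref{lem:g-from-phi} (existence giving fullness, uniqueness giving faithfulness/well-definedness) together with the preceding lemma identifying the essential image of $\GermSym\to\FLA$, merely packaging the final step as an explicitly constructed essential inverse $B:\bC\to\Def$ (choosing $\iota_\fk$ for each $\fk$) rather than invoking the abstract ``fully faithful onto the same essential image'' principle. The two formulations carry the same content, so no gap.
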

\begin{proof}
Once again we use the notation $\bC \subset \FLA$ for the full subcategory
in question. The forgetful functor factors through $A:\Def\to\bC$ by definition.
Its essential inverse $B:\bC\to\Def$ is constructed by choosing
for each $\fk$ in $\bC$ a graded Lie algebra monomorphism
$\iota_\fk : \gr\fk\to\fg$ such that $B(\fk) = (\fk,\iota_\fk)$ is an object of $\Def$.
Given a filtered homomorphism $\varphi : \fk \to \fk'$ in $\bC$,
let $g_\varphi \in G_0$ be the unique element such that
$(\varphi,g_\varphi)$ is a morphism $B(\fk)\to B(\fk')$ in $\Def$ (cf. Lemma \ref{lem:g-from-phi}).
Set $B(\varphi) = (\varphi,g_\varphi)$.
The  natural isomorphisms $AB \simeq \id_\bC$ and $BA \simeq \id_\Def$ are tautological:
the component of the former at $\fk$ is $\id_\fk$ as a morphism in $\bC$, 
while the component of the latter at $(\fk,\iota)$ is $\id_\fk$ as a morphism 
to $(\fk,\iota_\fk)$ in $\Def$.
\end{proof}

Our last step in this sub-section is to switch focus from the filtered
Lie algebra $\fk$ to the image of $\iota$, a graded subalgebra $\u\fk\subset\fg$ (we
will use this notational convention throughout the paper).
\begin{defn}
$\Sub$ is the category whose objects are graded sub-algebras of $\fg$ containing
$\fg_-$, and whose morphisms from $\u\fk$ to $\u\fk'$ are elements $g \in G_0$ such that
$\Ad_g \u\fk \subset \u\fk'$.
\end{defn}
The reason for including an element of $G_0$ explicitly in the definition of
morphisms in $\Def$ is that we now have a functor
$\Def \to \Sub$ sending $(\fk,\iota)$ to $\iota(\gr\fk)$ and
$(\varphi,g)$ to $g$. We thus view $\Def$ as a category over $\Sub$.
Recall that the \emph{fibre} $\Def_{\u\fk}$ of $\Def$ over an object $\u\fk$ of $\Sub$
is the sub-category of $\Def$ whose objects are mapped to $\u\fk$ in $\Sub$,
and whose morphisms are mapped to $\id_{\u\fk}$ in $\Sub$. In particular, in our case
$\Def_{\u\fk}$ is a large groupoid: indeed, a filtered Lie algebra homomorphism $\varphi:\fk \to \fk'$ is
an isomorphism if (and only if) $\gr\varphi$ is.
Now, given $(\fk,\iota)$ in $\Def_{\u\fk}$, we may view $\fk$ as a \emph{filtered deformation}
of $\u\fk$, trivial under passage to the associated graded.
We shall thus use
deformation theory to study the fibres of $\Def \to \Sub$.

\subsection{The formalism of DGLAs}

Deligne's principle states that every reasonably well-behaved deformation
problem (in char. $0$) is controlled by a \emph{differential Graded Lie algebra}
(abbreviated DGLA). That is, the equivalence problem for deformations
of an algebraic or geometric object (in our case encoded in the large groupoid $\Def_{\u\fk}$)
may be replaced with a standard equivalence problem associated with a
DLGA. One may then consider the question of existence of a `deformation space'
abstracting from the contingencies of the original objects (in our case, this
at the very least allows us to avoid unnecessarily cluttered notation). 
This subsection reviews the relevant notions and constructions without
any reference to the remaining parts of the paper. We shall resume the main
narrative in the next subsection. In part, we adapt the presentation of~\cite{Manetti}.

It is difficult to avoid a slight terminological inconsistency in
the use of the word `graded Lie algebra'. Thus far, it denoted a Lie algebra
together with a grading compatible with the Lie bracket. On the other hand,
in the context of DGLA, the term denotes essentially what is called a Lie super-algebra.
We shall use a capitalised `Graded' for the latter meaning; furthermore,
Graded degree will appear as an upper index (as opposed to graded degree,
appearing as a lower index). That is, a Graded vector space is $V = \bigoplus_p V^p$;
given a homogeneous element $v \in V^p$, we set $|v|=p$. 
\begin{defn}\label{def:dgla}
A DGLA is a Graded vector space $\fL = \bigoplus_p \fL^p$ together with:
\begin{enumerate}
\item a bracket $[,] : \fL \otimes \fL \to \fL$ of degree $0$,
\item a differential $d : \fL \to \fL$ of degree $1$, $d^2=0$,
\end{enumerate}
satisfying:
\begin{enumerate}
\item Graded skewness $[x,y] = (-1)^{|x||y|+1} [y,x]$,
\item Graded Jacobi identity $(-1)^{|x||z|}[x,[y,z]] +\mathrm{cycl.} = 0$,
\item Graded Leibniz rule $d[x,y] = [dx,y] + (-1)^{|x|} [x,dy]$
\end{enumerate}
for homogeneous elements $x,y,z \in \fL$.
\end{defn}

Note that a DGLA is in particular
a cochain complex. We define the cycles $Z^p(\fL)$, boundaries $B^p(\fL)$ and cohomology $H^p(\fL)
= Z^p(\fL)/B^p(\fL)$ in the usual way. 
Observe also that $\fL^0$ is a (usual) Lie algebra acting
on $\fL$ by Graded derivations of degree $0$. 
If this adjoint action is nilpotent, we consider the 
connected, simply connected Lie group $\exp\fL^0$ 
and its induced `adjoint' action by automorphisms of $\fL$, denoted $\Ad$. 
We may then identify $\exp\fL^0$ with $\fL^0$ as a manifold, so that
the exponential map becomes the identity, while the multiplication
and inverse, as well as $\Ad$, are given by polynomial maps.

The `standard' equivalence
problem associated with a DGLA is expressed by the following notions.
\begin{defn}
Let $\fL$ be a finite-dimensional DGLA. 
\begin{enumerate}
\item An element $x \in \fL^1$ is \emph{Maurer-Cartan} if $$ dx + \frac{1}{2}[x,x] = 0. $$
The algebraic subset of Maurer-Cartan elements is denoted $\MC(\fL) \subset \fL^1$.
\item The \emph{infinitesimal gauge action} is an affine action of $\fL^0$ on $\fL^1$
defined by
$$ \ast:\fL^0 \times \fL^1 \to \fL^1,\quad y\ast x = [y,x] - dy. $$
If the adjoint action of $\fL^0$ on $\fL$ is nilpotent, we exponentiate the above to
the \emph{gauge action} of $\exp\fL^0$ on $\fL^1$.
Together with this action, $\exp\fL^0$ is the \emph{gauge group} of $\fL$.
\end{enumerate}
\end{defn}
It is a simple exercise to check that the gauge action of $\exp\fL^0$ preserves
$\MC(\fL) \subset \fL^1$. We may thus consider the question of gauge-equivalence
of Maurer-Cartan elements.
\begin{defn}
Under the above assumptions, the 
\emph{gauge action groupoid} $\MC(\fL) \sslash \exp\fL^0$ has $\MC(\fL)$ as the set of objects,
and $$\Hom(x,y) = \{ u \in \exp\fL^0\ |\ u\ast x=y \}$$
as homsets for $x,y\in\MC(\fL)$.
\end{defn}

Observe that the gauge action is affine. In fact, one may consider
an extension $\fL \oplus \langle d \rangle$ of the original DGLA
obtained by formally adjoining an element $d$ in Graded degree $1$ with the obvious
relations $dd=0$, $[d,x]=dx$. The adjoint action of $\exp\fL^0$
extends naturally to an action on the extended DGLA, preserving the affine
subspace $d+\fL$.
Then, identifying $\fL$ with the affine subspace
$d + \fL$, one checks that the gauge action on $\fL$ corresponds to the
naturally extended adjoint action, restricted to $d + \fL$. In
symbols, $\Ad_u (d+x) = d+u\ast x$.

An ideal solution to the classification problem would be to construct
the `deformation space' $\MC(\fL) / \exp\fL^0$
as a manifold or variety. In general, this only possible 
\emph{formally} around the trivial deformation
represented by $0 \in \fL^1$, and furthermore up to 
some residual equivalence. The `optimal' formal deformation
space is then a so-called miniversal family, characterised by
the property that its tangent space at the origin is
identified with $H^1(\fL)$: the true space of first-order deformations. 
The \emph{Kuranishi family} is a miniversal family realised as
an often singular formal subvariety in $H^1(\fL)$.
In our case it will turn out that the construction may
be carried out \emph{globally}, producing an actual subvariety of $H^1(\fL)$,
which will furthermore turn out to be the actual deformation space,
i.e. a \emph{universal} family.
 
The features of our deformation problem that allow for a global construction are
captured abstractly in the following notion.
\begin{defn}
A \emph{graded nilpotent DGLA} is a finite-dimensional DGLA $\fL = \bigoplus_p \fL^p$ 
together with a grading $\fL^p = \bigoplus_{i>0} \fL^p_i$ in \emph{positive} degrees
such that both the bracket and the differential are of graded degree zero.
\end{defn}

Note that we have added a `lower-case' grading to the data. In particular,
$\fL^0 = \bigoplus_{i>0} \fL^0_i$ becomes a (positively) graded nilpotent Lie algebra,
with a compatible action on the graded vector space $\fL = \bigoplus_{i>0} \fL_i$.
The next feature of our particular deformation problem is the very strong vanishing condition
$H^0(\fL)=0$, implying
that the trivial deformation has a trivial stabiliser in the gauge group. As we shall
see, this does in fact imply that the gauge group acts freely on $\MC(\fL)$.
\begin{pro}\label{pro:kura}
Let $\fL$ be a graded nilpotent DGLA with $H^0(\fL)=0$. 
Then there is an algebraic subset $M \subset H^1(\fL)$
together with an algebraic map $\pi:\MC(\fL) \to M$,
an algebraically trivial principal bundle for the gauge group.
As a consequence, the gauge action groupoid $\MC(\fL)\sslash\exp\fL^0$
is equivalent to the discrete groupoid over $M$.
\end{pro}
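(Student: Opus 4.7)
The plan is to construct an algebraic slice for the gauge action via a Kuranishi-style iteration, and then use $H^0(\fL)=0$ to upgrade this to a trivialisation of $\MC(\fL)\to M$. The positivity and boundedness of the additional lower grading is what makes the iterations terminate in finitely many steps and produce actual polynomial maps rather than merely formal series.

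First I would fix a graded vector-space splitting $\fL^p = dC^{p-1} \oplus N^p \oplus C^p$ in which $d|_{C^{p-1}}$ is an isomorphism onto $dC^{p-1}$, $N^p$ projects isomorphically onto $H^p(\fL)$, and every summand is compatible with the lower grading. Let $h:\fL^\bullet\to\fL^{\bullet-1}$ be the homotopy vanishing on $N\oplus C$ and inverting $d$ on $dC$, and $P:\fL^\bullet\to N^\bullet$ the induced projection, so that $dh+hd=\id-P$. The assumption $H^0(\fL)=0$ translates into $\fL^0=C^0$, so $d|_{\fL^0}$ is injective with image a complement of $N^1\oplus C^1$ in $\fL^1$. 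For $\xi\in N^1$ I then solve iteratively $x^{(n+1)} = \xi + h\bigl(\tfrac12[x^{(n)},x^{(n)}]\bigr)$ starting from $x^{(0)}=\xi$. The new contributions at step $n$ land in components of lower-grading degree $\geq n+1$, so by boundedness of the lower grading the sequence stabilises after finitely many steps in a polynomial map $s:N^1\to\fL^1$. Setting $M=\{\xi\in N^1 : P([s(\xi),s(\xi)])=0\}$ gives an algebraic subset of $H^1(\fL)$, and a direct computation using $dh+hd=\id-P$ shows that $s(\xi)\in\MC(\fL)$ exactly when $\xi\in M$.

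Next I would construct $\pi:\MC(\fL)\to M$ by degree-by-degree gauge-fixing. Given $x\in\MC(\fL)$, its lowest-graded component $x_k$ is automatically closed (from the lowest-order MC equation), so I decompose $x_k = \xi_k+dy_k$ with $\xi_k\in N^1_k$ and $y_k\in C^0_k$ uniquely determined by injectivity of $d$ on $\fL^0$. The gauge transformation $\exp(y_k)$ eliminates $dy_k$ while only adding corrections of strictly higher lower-grading degree, so iterating brings $x$ into the normal form $s(\xi)$ for a unique $\xi\in M$; the total gauge element $u(x)\in\exp\fL^0$ used is polynomial in $x$. Freeness of the action is then extracted by the same bookkeeping: if $\exp(y)\ast s(\xi)=s(\xi)$, the lowest-degree component of the linearised identity $[y,s(\xi)]-dy=0$ yields $dy_k=0$ in the relevant degree, hence $y_k\in Z^0=0$ by $H^0(\fL)=0$, and iterating kills $y$ entirely. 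Assembling these, the map $(\xi,u)\mapsto u\cdot s(\xi)$ is a polynomial inverse to $(\pi,u):\MC(\fL)\to M\times\exp\fL^0$, which exhibits $\MC(\fL)$ as an algebraically trivial $\exp\fL^0$-bundle over $M$; the equivalence of the gauge action groupoid with the discrete groupoid over $M$ is then immediate.

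The main obstacle is controlling the interaction between the MC equation and the gauge action throughout this degree-by-degree analysis, in particular verifying that every step produces a genuinely algebraic (indeed polynomial) map rather than merely a formal or analytic one. The positive lower grading, bounded in a finite range, is exactly what makes this feasible: it simultaneously forces termination of all iterative constructions and supplies the natural filtration against which to verify both freeness and transitivity of the gauge action on each fibre of $\pi$.
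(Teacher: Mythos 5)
Your argument is essentially the paper's own proof: the bigraded splitting with contraction $h$ (the paper's $\delta$), the finite fixed-point iteration producing the algebraic slice (the paper inverts $\Phi(x)=x+\tfrac12\delta[x,x]$ degree by degree, which is the same construction), the obstruction locus $M=\{\xi : P[s(\xi),s(\xi)]=0\}\subset H^1(\fL)$, and the degree-by-degree gauge fixing made possible by $C^0=\fL^0$ (the paper's $\Psi_x$ and $\eta$), giving the algebraic trivialisation of $\MC(\fL)\to M$ and the groupoid equivalence. One sign slip: with your convention $dh+hd=\id-P$ the iteration must read $x^{(n+1)}=\xi-\tfrac12\,h\bigl[x^{(n)},x^{(n)}\bigr]$ (equivalently, replace $h$ by $-h$), since with the plus sign the Maurer--Cartan defect acquires a stray $[x,x]$ term, whereas with the minus sign one gets $dx+\tfrac12[x,x]=\tfrac12 P[x,x]+h\bigl[dx+\tfrac12[x,x],\,x\bigr]$ and your induction on the lower grading goes through as intended.
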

We will refer to such $M$, together with an
algebraic section $\xi : M \to \MC(\fL)$,
as a (global, universal) Kuranishi family.
\begin{proof}
Choose a splitting
$$\fL = Z(\fL) \oplus C = B(\fL) \oplus H(\fL) \oplus C$$
on the level of bigraded (i.e. graded Graded) vector spaces. Note that
the restriction $d|_C : C \to B(\fL)$ of the differential is invertible.
Let $\delta : \fL \to \fL$ be the map of Graded degree $-1$ (and graded degree $0$)
defined as $d|_C^{-1} : B(\fL) \to C$ pre-composed with projection $\fL \to B(\fL)$
and post-composed with inclusion $C \to \fL$. 
Note that $\delta^2=0$ and $d\delta$ is the projection onto $B(\fL)$
while $\delta d$ is the projection onto $C$.
Consider the algebraic map
$$
\Phi : \fL^1 \to \fL^1,\quad \Phi(x) = x + \frac{1}{2} \delta [x,x].
$$
We claim that it possesses the following properties:
\begin{enumerate}
\item $\Phi$ admits an algebraic inverse,
\item $\Phi$ identifies $\MC(\fL)$ with the set 
$$ \{ x \in Z^1(\fL)\ |\ 
[\Phi^{-1}x, \Phi^{-1}x]\in B^2(\fL) \},$$
\item $\Phi$ identifies 
$\MC(\fL) \cap (H^1(\fL)\oplus C^1)$
with $\Phi(\MC(\fL)) \cap H^1(\fL)$. 
\end{enumerate}
For (1), observe that the additional nilpotent grading on $\fL$
allows one to solve the equation $x + \frac{1}{2}\delta[x,x] = y$
degree by degree.
For (2), observe that given $x \in \fL^1$ such that
$[x,x] \in B^2(\fL)$, we have $d\Phi(x) = dx + \frac{1}{2}[x,x]$ indentically.
For (3), observe that given $x \in \MC(\fL)$ we have
$d\Phi(x)=0$ as well as $\delta \Phi(x) = \delta x$.
We now define $M$ to be the zero-locus of the quadratic map
$$ H^1(\fL) \hookrightarrow \fL^1 \xrightarrow{\Phi^{-1}} 
\fL^1 \xrightarrow{[,]} \fL^2 \to H^2(\fL)\oplus C^2 $$ 
so that $[\Phi^{-1}(x), \Phi^{-1}(x)]\in B^2(\fL)$ for all $x \in M$.
It follows that there is a pullback diagram
$$\begin{diagram}
\node{M} \arrow{s,l}{\xi} \arrow{e} \node{\fL^1} \arrow{s,r}{\Phi^{-1}} \\
\node{\MC(\fL) \cap (H^1(\fL)\oplus C^1)} \arrow{e} \node{\fL^1.}
\end{diagram}$$
identifying $M$
with the zero-locus of $\delta$ in $\MC(\fL)$.

We now ask whether this zero-locus intersects all gauge orbits.
Given $x \in \MC(\fL)$, define the function
$$ \Psi_x : C^0 \to C^0,\quad \Psi(y) = \delta(e^y\ast x). $$
Again using the nilpotent grading on $\fL$ and solving the
equation $\delta (e^y\ast x) = z$ degree by degree, one finds
that $\Psi_x$ admits an algebraic inverse, and furthermore
the latter depends algebraically on $x$ (the degree $i$ graded component
of this equation is of the form $y_i=\delta d y_i = \delta(\dots)$ 
where $(\dots)$ involves
only $y_j$, $j<i$). Thus,
the zero-locus of the map
$$ C^0 \times \MC(\fL) \to C^0,\quad (y,x) \mapsto \Psi_xy $$
is precisely the graph of an algebraic map $\eta : \MC(\fL) \to C^0$
such that $$e^{\eta(x)}\ast x \in H^1(\fL)\oplus C^1(\fL)$$
for all $x \in \MC(\fL)$. In particular, we may define
the projection
$$ \pi : \MC(\fL) \to M,\quad \pi(x) = \Phi(e^{\eta(x)}\ast x) $$
so that $\pi\circ\xi = \id_M$.

Of course $C^0 = \fL^0$ by the hypothesis $H^0(\fL)=0$.
It thus follows that the action map
$$
\exp\fL^0 \times M \times  \MC(\fL),\quad (e^y,m) \mapsto e^y \ast \xi(m)
$$
admits an algebraic inverse
sending $x \in \MC(\fL)$ to $(e^{-\eta(x)}, \pi(x)) \in \exp\fL^0\times M$.
Hence, $\pi : \MC(\fL) \to M$ is a trivial principal bundle
for the gauge action of $\exp\fL^0$. Finally, $\pi$ induces
a homomorphism of groupoids from $\MC(\fL)\sslash\exp\fL^0$ to $M$ (discrete),
sending the morphism $x \to x'$ given by $u \in \exp\fL^0$
to the identity morphism of $\pi(x)=\pi(x')$. By freeness
of the action of the structure group of a principal bundle,
it follows that the automorphism group of an object in $\MC(\fL)\sslash\exp\fL^0$
is trivial and thus the induced maps on automorphism groups are isomorphisms. Thus,
the above homomorphism is an equivalence of groupoids.
\end{proof}

We remark that even without the condition $H^0(\fL)=0$ we may
carry out the construction of $M \subset H^1(\fL)$ together
with the quotient map $\pi : \MC(\fL) \to M$ and a section
$\xi$ identifying $M$ with $\MC(\fL) \cap (H^1(\fL) + C^1)$.
However, if $H^0(\fL)$ is nontrivial, $\pi$ is no longer a principal bundle.
Nevertheless, $M$ may still be a universal family, i.e. the true quotient
$\MC(\fL) / \exp \fL^0$, as long as the gauge subgroups stabilising all 
Maurer-Cartan elements are of the same dimension $\dim H^0(\fL)$.

\subsection{Back to deformations: fibres}

We now return to the previous setting. Our aim is to describe
the fibre $\Def_{\u\fk}$ as equivalent to the action groupoid
$\MC(\fL)\sslash \exp\fL^0$ for a suitable DGLA controlling 
filtered deformations of $\u\fk$ with a trivial associated graded. This
will then allow us to pass to the Kuranishi
space $M$, as a discrete groupoid (as long as the zeroth cohomology of $\fL$ vanishes).
In particular, the points of $M \subset H^1(\fL)$ will
be in bijection with isomorphism classes of objects of $\Def_{\u\fk}$. We refer
to~\cite{Fialowski}
for the background on deformations of Lie algebras.

We begin by recalling some further Graded notions,
following the conventions introduced
in the preceding subsection.
\begin{defn}\ 
\begin{enumerate}
\item A Graded-commutative algebra is a Graded vector space $A = \bigoplus_p A^p$
together with an associative bilinear operation $\cdot$ of degree $0$ such that
$$ xy = (-1)^{|x||y|}yx $$
for homogeneous elements $x,y \in A$. 
\item A Graded derivation of $A$ of degree $r$
is a degree $r$ linear map $\delta : A \to A$ satisfying the graded Leibniz identity
$$
 \delta (xy) = (\delta x)y + (-1)^{} x (\delta y).
$$
\item A Graded Lie algebra is a Graded vector space $\fL = \bigoplus_p \fL^p$
together with a bracket $[,] : \fL \otimes \fL$ of degree $0$ satisfying
Graded skewness and Graded Leibniz rule (cf. Definition \ref{def:dgla}). 
\item The Graded Lie algebra of derivations of a Graded-commutative algebra $A$
is the Graded vector space $\Der A = \bigoplus_p \Der^p A$ where
$\Der^p A$ consists of degree $p$ Graded derivations of $A$, together with the
bracket defined by
$$ [ \delta, \delta' ] = \delta\delta' - (-1)^{|\delta||\delta'|} \delta'\delta $$
on homogeneous elements, and extended by bilinearity.
\end{enumerate}
\end{defn}

Let us now fix a graded Lie algebra $\u\fk$.
The exterior algebra $\Lambda^\bullet\u\fk^*$ 
is naturally a Graded-commutative algebra, and
$\u\fk$ embeds in $\Der^{-1} \Lambda^\bullet\u\fk^*$
via the evaluation map.
Consider the cochain complex 
$$C^\bullet(\u\fk,\u\fk) = \Lambda^\bullet \u\fk^* \otimes \fk$$ 
with differential $d$ computing Lie algebra
cohomology with coefficients in the adjoint representation. 
We define an injective map
$$ i : \Lambda^\bullet\u\fk^* \otimes \u\fk \to \Der \Lambda^\bullet\u\fk^* $$ 
$$ i_{\alpha\otimes X} \beta = \alpha \wedge \beta(X). $$
such that $i_{\alpha\otimes X}$ is a degree $|\alpha|-1$ Graded derivation
for homogeneous $\alpha$.
Let $\widetilde\fL(\u\fk)$ be the image of $i$. One checks that
it is a Graded Lie sub-algebra of $\Der \Lambda^\bullet\u\fk^*$,
and furthermore becomes a DGLA with the differential $d$.
Explicitly, we have 
$$ \tilde\fL^p(\u\fk) \simeq C^{p+1}(\u\fk,\u\fk). $$
The bracket in $\Der \Lambda^\bullet\u\fk^*$, restricted to
$\tilde\fL(\u\fk)$ and viewed as a bilinear operation on 
$C^\bullet(\u\fk,\u\fk)$ is usually referred to as the \emph{Richardson-Nijenhuis bracket}.

Since $\u\fk$ was originally a graded Lie algebra,
it follows that $\tilde\fL(\u\fk)$ carries additionally an induced grading
$\tilde\fL(\u\fk) = \bigoplus_i \tilde\fL(\u\fk)_i$.
While $\tilde\fL(\u\fk)$ controls all deformations of $\u\fk$,
we need to restrict to a subalgebra corresponding to filtered
deformations whose associated graded is trivial. That corresponds
to taking the components of strictly positive degree: we define
$$\fL(\u\fk) = \bigoplus_{j>0} \tilde\fL(\u\fk)_j.$$
Since $d$ and the Richardson-Nijenhuis bracket
are compatible with the grading induced from $\u\fk$,
it follows that $\fL(\u\fk)$
is a \emph{graded nilpotent sub-DGLA.}
We leave it to the reader to 
convince themselves that
the assignment $\u\fk \mapsto \fL(\u\fk)$ extends to a functor
$$\fL : \GLA \to \DGLA$$
to the category of finite-dimensional DGLAs and their homomorphisms.
Its relation to deformations may be finally revealed:
\begin{pro}\label{pro:fibres}
For each $\u\fk$ in $\Sub$, there is an equivalence of categories
between $\Def_{\u\fk}$ and $\MC(\fL(\u\fk)) \sslash \exp\fL^0(\u\fk)$.
\end{pro}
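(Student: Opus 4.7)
The plan is to construct an explicit pair of mutually quasi-inverse functors between $\Def_{\u\fk}$ and the gauge action groupoid $\MC(\fL(\u\fk))\sslash\exp\fL^0(\u\fk)$. I first choose, for every object $(\fk,\iota)$ of $\Def_{\u\fk}$, a splitting of the filtration on $\fk$ compatible with $\iota$, i.e.\ a filtered linear isomorphism $s : \u\fk \to \fk$ which induces $\iota^{-1}$ on the associated graded (such splittings exist over a field). Transporting the Lie bracket of $\fk$ along $s$ yields a bracket on $\u\fk$, and the difference $\mu := s^*[\cdot,\cdot]_\fk - [\cdot,\cdot]_{\u\fk}$ is skew-symmetric of strictly positive filtered degree, hence an element of $\bigoplus_{j>0} C^2(\u\fk,\u\fk)_j = \fL^1(\u\fk)$.

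A direct unwinding then shows that the Jacobi identity for $s^*[\cdot,\cdot]_\fk$ is equivalent to the Maurer--Cartan equation $d\mu + \tfrac{1}{2}[\mu,\mu]=0$ in $\fL(\u\fk)$; this is the standard translation between Lie algebra deformations and the Chevalley--Eilenberg/Richardson--Nijenhuis calculus on $C^\bullet(\u\fk,\u\fk)$. For morphisms $(\varphi,\id) : (\fk,\iota) \to (\fk',\iota')$ in $\Def_{\u\fk}$, the chosen splittings transport $\varphi$ to a filtered endomorphism $\tilde\varphi$ of $\u\fk$ whose associated graded is $\id_{\u\fk}$; then $\tilde\varphi - \id$ is nilpotent of strictly positive filtered degree, so $\tilde\varphi = \exp\psi$ for a unique $\psi \in \fL^0(\u\fk) = \bigoplus_{j>0} \End(\u\fk)_j$. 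The requirement that $\tilde\varphi$ intertwine the deformed brackets becomes precisely $\exp(\psi)\ast\mu = \mu'$, via the realisation of the exponentiated gauge action as the adjoint action on $d+\fL^1$ inside the extension $\fL\oplus\langle d\rangle$ recalled in the preceding subsection. This defines the functor $F : \Def_{\u\fk} \to \MC(\fL(\u\fk))\sslash\exp\fL^0(\u\fk)$.

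The quasi-inverse $G$ is tautological: send $\mu \in \MC(\fL(\u\fk))$ to the Lie algebra $(\u\fk,\,[\cdot,\cdot]_{\u\fk}+\mu)$ endowed with the filtration inherited from the grading on $\u\fk$ and $\iota = \id_{\u\fk}$; send a gauge transformation $\exp(\psi)$ to the filtered Lie algebra isomorphism $\exp(\psi)$ between the corresponding deformed brackets. The natural isomorphism $FG\simeq\id$ is automatic since $G(\mu)$ comes with a canonical splitting (the identity), with respect to which $F$ recovers $\mu$. The natural isomorphism $GF\simeq\id$ is witnessed on $(\fk,\iota)$ by the chosen splitting $s$ itself, viewed as a filtered Lie algebra isomorphism $GF(\fk,\iota)\to(\fk,\iota)$.

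The principal technical point I expect to require care is the well-definedness and functoriality of $F$ under the choices of splittings. Two different splittings of the same $(\fk,\iota)$ differ by a filtered linear automorphism of $\u\fk$ whose associated graded is $\id$, and one must verify that the two resulting Maurer--Cartan elements are related by the gauge action of the corresponding element of $\exp\fL^0(\u\fk)$; this is the same computation as for morphisms, so establishing the dictionary \emph{deformed bracket $\leftrightarrow$ Maurer--Cartan element} and \emph{filtered isomorphism trivial on $\gr$ $\leftrightarrow$ gauge group element} just once, with careful sign conventions, suffices to give all the compatibilities at once. Everything beyond this is routine bookkeeping.
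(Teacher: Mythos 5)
Your proposal is correct and follows essentially the same route as the paper: the dictionary you defer to (deformed bracket $\leftrightarrow$ Maurer--Cartan element, and filtered isomorphism inducing $\id$ on $\gr$ $\leftrightarrow$ gauge element acting via the $d+\fL^1$ trick) is exactly the content of the paper's Lemmas \ref{lem:def-phi} and \ref{lem:def-u}, on which its proof of Proposition \ref{pro:fibres} rests. The only difference is bookkeeping: the paper takes the tautological functor $\MC(\fL(\u\fk))\sslash\exp\fL^0(\u\fk)\to\Def_{\u\fk}$ (your $G$) as primary and checks it is full, faithful and essentially surjective, the splitting choice entering only in essential surjectivity, whereas you build the splitting-dependent functor in the other direction and exhibit explicit quasi-inverses.
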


We need a pair of Lemmas.
\begin{lem}\label{lem:def-phi}
Let $\u\fk$ be a graded Lie algebra, and $\phi \in \fL^1(\u\fk)$.
Set $\fk_\phi = \u\fk$ as filtered vector spaces.
Viewing $\phi$ as a map $\Lambda^2\u\fk^* \to \u\fk$, define a
deformed bracket on $\fk_\phi$ by
$$
[ X, Y ]_\phi = [X,Y] + \phi(X,Y).
$$
Then $[-,-]_\phi$ satisfies the Jacobi identity if and only if
$\phi$ is Maurer-Cartan. If that is the case, $\fk_\phi$ with $[,]_\phi$
is a filtered Lie algebra such that the tautological map $\gr\fk_\phi\to\u\fk$ is an isomorphism
of graded Lie algebras.
\end{lem}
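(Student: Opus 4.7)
The plan is to verify the equivalence of the Jacobi identity for $[\cdot,\cdot]_\phi$ with the Maurer-Cartan equation by unfolding definitions, and then to read off the filtered/graded structure directly from the positive-degree condition on $\phi$.

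First I would compute the Jacobiator of the deformed bracket,
\[
J_\phi(X,Y,Z) := [X,[Y,Z]_\phi]_\phi + [Y,[Z,X]_\phi]_\phi + [Z,[X,Y]_\phi]_\phi,
\]
expanding each term using $[A,B]_\phi = [A,B] + \phi(A,B)$. The result splits into three cyclic sums. The sum of terms $[X,[Y,Z]]$ vanishes by the Jacobi identity for $\u\fk$. The cyclic sum of $[X,\phi(Y,Z)] + \phi(X,[Y,Z])$ is, by the standard identification $\tilde\fL^p(\u\fk)\simeq C^{p+1}(\u\fk,\u\fk)$, exactly $(d\phi)(X,Y,Z)$ where $d$ is the Chevalley-Eilenberg differential with values in the adjoint representation. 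The remaining cyclic sum of $\phi(X,\phi(Y,Z))$ equals $\tfrac{1}{2}[\phi,\phi](X,Y,Z)$ for the Richardson-Nijenhuis bracket. Hence $J_\phi \equiv 0$ holds as an element of $\Lambda^3\u\fk^*\otimes\u\fk$ if and only if $d\phi + \tfrac{1}{2}[\phi,\phi]=0$, that is, $\phi\in\MC(\fL(\u\fk))$.

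Assuming $\phi$ is Maurer-Cartan, I would next establish the filtered structure. Since $\phi \in \fL^1(\u\fk) = \bigoplus_{j>0}\tilde\fL^1(\u\fk)_j$, it decomposes as $\phi = \sum_{j>0}\phi_j$ where each $\phi_j$ is bilinear of graded degree $j$, i.e.\ $\phi_j(\u\fk_p,\u\fk_q)\subset \u\fk_{p+q+j}$. Combined with the fact that the original bracket has degree $0$, this gives $[\fk_\phi^p,\fk_\phi^q]_\phi \subset \fk_\phi^{p+q}$ for the filtration $\fk_\phi^i = \bigoplus_{p\ge i}\u\fk_p$, so $\fk_\phi$ is a filtered Lie algebra.

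Finally, for the tautological identification $\gr\fk_\phi\to\u\fk$: the graded component of $[\cdot,\cdot]_\phi$ on $\gr^p\fk_\phi\otimes\gr^q\fk_\phi$ landing in $\gr^{p+q}$ picks up only the $j=0$ part of $[\cdot,\cdot]+\phi$. Since every $\phi_j$ has $j>0$, this leading part is the original bracket, so the associated graded is precisely $\u\fk$ as a graded Lie algebra. I do not anticipate any genuine obstacle here; the only care needed is in tracking sign conventions and the index shift implicit in $\tilde\fL^1(\u\fk)\simeq C^2(\u\fk,\u\fk)$, so that the Graded degree of $\phi$ in the DGLA really does correspond to a $2$-cochain with values in the adjoint representation.
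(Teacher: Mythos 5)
Your proof is correct and takes essentially the same route as the paper's: expand the Jacobiator of $[\cdot,\cdot]_\phi$, identify the linear part with the Chevalley--Eilenberg differential $d\phi$ and the quadratic part with $\tfrac{1}{2}[\phi,\phi]$ for the Richardson--Nijenhuis bracket, then use that $\phi$ has strictly positive graded degree to obtain compatibility with the filtration and triviality of the associated graded. The only step the paper makes explicit that you defer is the verification of the quadratic identification via $i_{[\phi,\phi]}=2i_\phi^{2}$, which is precisely the sign/normalization bookkeeping you flag at the end.
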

\begin{proof}
We compute:
\begin{eqnarray*}
[[X,Y]_\phi,Z]_\phi &=& [[X,Y],Z] + [\phi(X,Y),Z] + \phi([X,Y],Z) 
\\ &+&
2\phi(\phi(X,Y),Z) 
\end{eqnarray*}
so that Jacobi identity is equivalent to
$$ [Z, \phi(X,Y)] - \phi([Z,X],Y) + 2\phi(Z,\phi(X,Y)) + \textrm{cycl.} = 0. $$
Now, the differential $d\phi$ is precisely
$$
d\phi(X,Y,Z) = [Z,\phi(X,Y)] - \phi([Z,X],Y) + \textrm{cycl.}
$$
On the other hand, the Richardson-Nijenhuis bracket is given by
$i_{[\phi,\phi]} = 2 i_\phi^2$ whence 
$$
[\phi,\phi](X,Y,Z) = 4\phi(\phi(X,Y),Z) + \textrm{cycl.}
$$
and thus the Jacobi identity becomes $d\phi + \frac{1}{2}[\phi,\phi]=0$.
Now, if the above is satisfied, the deformed bracket is compatible
with the filtration since $\phi$ is contained in the degree $0$
filtered piece of $C^2(\u\fk,\u\fk)$;
furthermore, it induces the original bracket on the associated graded since
$\phi$ is in fact contained in the degree $1$ filtered piece.
\end{proof}

\begin{lem}
\label{lem:def-u}
Let $U \subset \GL(\u\fk)$ be the unipotent subgroup
consisting of filtration-preserving maps $u : \u\fk \to \u\fk$
with $\gr u = \id_{\u\fk}$, and let $\fu \subset \End\u\fk$
denote its Lie algebra. 
Then the embedding $C^{1,1}(\u\fk,\u\fk) \hookrightarrow \End \u\fk$
identifies $\fL^0(\u\fk)$ with $\fu$ and 
induces an identification of $\exp\fL^0(\u\fk)$ with $U$
such that, in the notation of Lemma \ref{lem:def-phi},
$$ u : \fk_\phi \to \fk_{u\ast\phi} $$
is a filtered Lie algebra isomorphism for all $u \in U$, $\phi \in \MC(\fL^1(\u\fk))$.
\end{lem}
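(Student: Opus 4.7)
The plan is first to unwind the identification $\fL^0(\u\fk) \simeq \fu$ at the level of vector spaces and Lie algebras, and then to identify the gauge action with the conjugation action of $U$ on brackets, establishing the latter claim infinitesimally and using the nilpotent grading to bootstrap to the group level.

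First, I would note that $\fL^0(\u\fk) = \bigoplus_{j>0} \widetilde\fL^0(\u\fk)_j$, and under $\widetilde\fL^0(\u\fk) \simeq C^1(\u\fk,\u\fk) = \u\fk^* \otimes \u\fk = \End\u\fk$, the positive-degree piece (with respect to the grading on $\End\u\fk$ induced from $\u\fk$) consists precisely of those endomorphisms strictly raising the grading. These are exactly the Lie algebra $\fu$ of the unipotent group $U$ of filtration-preserving maps with trivial associated graded. A direct comparison of the Richardson--Nijenhuis bracket of $1$-cochains with the commutator in $\End\u\fk$ (both are $[y,y'](X) = y(y'(X)) - y'(y(X))$ up to sign) shows this identification is one of Lie algebras. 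Since $\fu$ is nilpotent, the exponential map is a polynomial bijection with $U$, and matches the abstract exponential of $\fL^0(\u\fk)$ used to define the gauge group.

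For the main claim, define an auxiliary action $\bullet$ of $U$ on $C^2(\u\fk,\u\fk)$ by the requirement
\[ u[X,Y]_\phi = [uX, uY]_{u\bullet\phi}, \]
that is, $u\bullet\phi$ is uniquely determined so that $u:\fk_\phi \to \fk_{u\bullet\phi}$ is a Lie algebra isomorphism. Existence of $u\bullet\phi$ as an element of $C^{2,\ge1}(\u\fk,\u\fk)$ follows by solving the above equation degree-by-degree against the nilpotent filtration (the obstruction in each step lies in the next graded piece). The strategy is then to show $u\bullet\phi = u\ast\phi$, i.e.~that $\bullet$ coincides with the gauge action.

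To do this, differentiate at $u = \exp(ty)$, $y\in\fu$. Writing $u = 1 + ty + O(t^2)$ and $u\bullet\phi = \phi + t\,\sigma(y,\phi) + O(t^2)$, the defining equation expands (using $[,]_\phi = [,] + \phi$) to
\[
\sigma(y,\phi)(X,Y) = y[X,Y] - [yX,Y] - [X,yY] + y\phi(X,Y) - \phi(yX,Y) - \phi(X,yY).
\]
The first three terms reproduce $-dy(X,Y)$ for $dy$ the Chevalley--Eilenberg differential of $y \in C^1(\u\fk,\u\fk)$ with coefficients in the adjoint representation, while the remaining three terms match the Richardson--Nijenhuis bracket $[y,\phi](X,Y)$ on the nose. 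Hence $\sigma(y,\phi) = [y,\phi] - dy = y\ast\phi$. The main obstacle is therefore this sign-sensitive bookkeeping, which amounts to reconciling the Richardson--Nijenhuis conventions with the ordinary Chevalley--Eilenberg ones; once done, both $t\mapsto u_t\bullet\phi$ and $t\mapsto e^{ty}\ast\phi$ satisfy the same first-order ODE in $t$ with the same initial condition $\phi$. The nilpotent grading makes the flow polynomial and globally defined, so uniqueness of solutions yields $u\bullet\phi = u\ast\phi$ for all $u\in U$ and $\phi$; specialising to $\phi\in\MC(\fL(\u\fk))$ gives that $u : \fk_\phi \to \fk_{u\ast\phi}$ is an isomorphism of filtered Lie algebras, as desired.
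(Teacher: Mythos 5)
Your proposal is correct in substance, but it takes a genuinely different route from the paper. The paper's proof is a one-line globalisation of a device already set up in the DGLA subsection: it embeds $\fL(\u\fk)\oplus\langle d\rangle$ into $\tilde\fL(\u\fk)$ by sending the formal element $d$ to $d\,\id$ with $\id\in\tilde\fL^0(\u\fk)=\End\u\fk$, computes $(d\,\id)(X,Y)=[X,Y]$ so that $d\,\id+\phi$ \emph{is} the deformed bracket $[\cdot,\cdot]_\phi$, and then invokes the identity $\Ad_u(d+x)=d+u\ast x$; since the adjoint action of $u\in U$ on cochains is just the natural $\GL(\u\fk)$-action, the equation $d\,\id+u\ast\phi=u(d\,\id+\phi)$ immediately yields $[uX,uY]_{u\ast\phi}=u[X,Y]_\phi$ at the group level, with no differentiation. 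You instead introduce the transport action $u\bullet\phi$, verify by a direct cochain computation that its infinitesimal generator is $y\ast\phi=[y,\phi]-dy$, and integrate via uniqueness for the affine ODE, using nilpotency of $\fu$ and surjectivity of $\exp$. This is a valid and self-contained argument (the derivative computation is the standard one, and the group property of $\bullet$ plus the definition of the gauge action as the flow of $y\ast(\cdot)$ makes the ODE comparison legitimate), but it re-derives by hand precisely the affine identification that the paper quotes, and it pushes all the sign conventions into an explicit Richardson--Nijenhuis versus Chevalley--Eilenberg comparison which you flag but do not fully pin down: with the paper's particular embedding $i$ one must check that the bracket of $1$-cochains is the commutator of endomorphisms rather than its negative, since otherwise the identification of $\exp\fL^0(\u\fk)$ with $U$ is via inversion and the roles of $u$ and $u^{-1}$ in the statement get exchanged. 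The paper's route buys sign-consistency by construction and brevity; yours buys independence from the earlier remark about the extended DGLA, at the cost of the ODE bootstrap and the sign bookkeeping.
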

\begin{proof}
Recall that the gauge action of $\exp\fL^0$ on $\fL^1(\u\fk)$
may be identified with the restriction of its linear action on $\fL^1(\u\fk) \oplus \langle d\rangle$
to the linear subspace $d + \fL^1(\u\fk)$. In the present case, we may embed 
$\fL(\u\fk) \oplus \langle d \rangle$ as a sub-DGLA of $\tilde\fL(\u\fk)$, where
the additional degree $1$ element $d$ is  mapped to $d\id$, with $\id \in \tilde\fL^0(\u\fk)
= \End\u\fk$ being the identity map. We then have 
$$
d\id + u \ast \phi = u( d\id + \phi)
$$
for all $u \in U$, $\phi \in \fL^1$. Now, computing
$$
(d \id)(X,Y) = [X,\id Y] - [Y, \id X] - \id[X,Y] = [X,Y] 
$$
we have that
$$
[uX,uY]_{u\ast\phi} = u[X,Y]_{\phi}
$$
as desired.
\end{proof}

\begin{proof}[Proof of Proposition \ref{pro:fibres}]
The construction of Lemma \ref{lem:def-phi} gives rise to a functor
$$F : \MC(\fL(\u\fk)) \sslash \exp\fL^0(\u\fk) \to \Def_{\u\fk}$$
sending $\phi$ to $\fk_\phi$ together with the tautological map
$\gr\fk_\phi \to \u\fk \subset \fg$. An element $u \in U\simeq\exp\fL^0(\u\fk)$, viewed
as a morphism $\phi \to \phi'$ as in Lemma \ref{lem:def-u}, 
is sent by $F$ to itself viewed as a filtered
map $\fk_\phi \to \fk_{\phi'}$. It is easy to see from this construction that
$F$ is full and faithful. Finally, to see that it is essentially surjective
note that for any $(\fk,\iota)$ in $\Def_{\u\fk}$ we may
choose an identification of $\fk$ with $\gr\fk \simeq \u\fk$ as vector spaces;
then define an element $\phi : \Lambda^2\u\fk^* \to \u\fk$ by
$\phi(X,Y) = [X,Y]_\fk - [X,Y]_{\u\fk}$. By Lemma \ref{lem:def-phi}, $\phi$ is Maurer-Cartan
and thus defines
an object of $\MC(\fL(\u\fk)) \sslash \exp\fL^0(\u\fk)$. By construction, $F(\phi)$
is isomorphic to $(\fk,\iota)$ in $\Def_{\u\fk}$.
\end{proof}

\begin{cor}\label{cor:fibres}
For each $\u\fk$ in $\Sub$ such that
$H^0(\fL(\u\fk))=0$ there is an equivalence of categories between
$\Def_{\u\fk}$ and the discrete groupoid on $M_{\u\fk}$,
where $(M_{\u\fk}, \xi_{\u\fk})$ is a Kuranishi family for $\fL(\u\fk)$.
\end{cor}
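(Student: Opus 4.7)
The plan is simply to compose the two equivalences already established, since the corollary is a direct combination of Proposition \ref{pro:fibres} and Proposition \ref{pro:kura}.

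First I would invoke Proposition \ref{pro:fibres} to obtain an equivalence of categories
$$ \Def_{\u\fk} \;\simeq\; \MC(\fL(\u\fk)) \sslash \exp\fL^0(\u\fk). $$
This reduces the problem to analysing the gauge action groupoid on the right-hand side.

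Next I would check that $\fL(\u\fk)$ satisfies the hypotheses of Proposition \ref{pro:kura}. This is already built into its construction: by definition $\fL(\u\fk) = \bigoplus_{j>0} \tilde\fL(\u\fk)_j$ is a graded nilpotent sub-DGLA of $\tilde\fL(\u\fk)$, and the additional assumption $H^0(\fL(\u\fk))=0$ is precisely the standing hypothesis of the corollary. Proposition \ref{pro:kura} then yields a Kuranishi family $(M_{\u\fk}, \xi_{\u\fk})$ together with an equivalence of groupoids
$$ \MC(\fL(\u\fk)) \sslash \exp\fL^0(\u\fk) \;\simeq\; M_{\u\fk}, $$
where the right-hand side is viewed as a discrete groupoid.

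Composing the two equivalences gives the desired equivalence between $\Def_{\u\fk}$ and the discrete groupoid on $M_{\u\fk}$. There is no real obstacle: the entire technical content has already been absorbed into Propositions \ref{pro:fibres} and \ref{pro:kura}, and only the formal juxtaposition remains.
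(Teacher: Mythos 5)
Your proposal is correct and coincides with the paper's own argument, which likewise deduces the corollary by composing the equivalence of Proposition \ref{pro:fibres} with the Kuranishi-family equivalence of Proposition \ref{pro:kura}, the latter applying because $\fL(\u\fk)$ is by construction a graded nilpotent DGLA and $H^0(\fL(\u\fk))=0$ is the standing hypothesis. Nothing further is needed.
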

\begin{proof}
This follows from Proposition \ref{pro:fibres} and Proposition
\ref{pro:kura}.
\end{proof}

\subsection{The complete picture}

Having described the fibres of $\Def \to \Sub$ in terms of Kuranishi families,
we would like to assemble them together and recover the original category $\Def$.
Note that passing to the fibres we lose information about those morphisms
in $\Def$ which project to non-identity morphisms in $\Sub$. That is, these
are either (1) homomorphisms of filtered deformations of a single $\u\fk \subset \fg$,
whose associated graded is a nontrivial automorphism of $\u\fk$; or (2)
homomorphisms of filtered deformations of different graded subalgebras of $\fg$. 

If the map assigning to each $\u\fk$ in $\Sub$ the action groupoid 
$\MC(\fL(\u\fk)) \sslash \exp\fL^0(\u\fk)$ were (pseudo)functorial, we would
recover a category fibred over $\Sub$ through the so-called Grothendieck construction.
However, $\Def$ is \emph{not} a fibred category over $\Sub$, and
even the map $\u\fk \mapsto \MC(\fL(\u\fk))$ isn't functorial -- indeed, given
a graded Lie algebra monomorphism $\u\fk \to \u\fk'$, we only have a subset
of Maurer-Cartan element in $\fL(\u\fk')$ that project-restrict to $\fL(\u\fk)$. We thus
have to perform the construction by hand:
\begin{defn}$\bMC$ is the category whose objects are pairs
$(\u\fk, \phi)$ where $\u\fk$ is an object of $\Sub$ and
$\phi \in \MC(\fL(\u\fk))$. Its morphisms $(\u\fk,\phi) \to (\u\fk',\phi')$
are equivalence classes of triples $(g,u,u')$ where 
$$ g \in \Hom_\Sub(\u\fk,\u\fk'),\quad
u \in \exp\fL^0(\u\fk),\quad u' \in \exp\fL^0(\u\fk')$$
are such that the diagram
$$\begin{diagram}
\node{\Lambda^2 \u\fk'^* } \arrow{e,t}{u'^{-1}\phi'} \arrow{s} \node{\u\fk'} \\
\node{\Lambda^2 (g\u\fk)^* } \arrow{e,t}{\Ad_g u\phi} \node{g\u\fk} \arrow{n}
\end{diagram}$$
commutes. Two triples $(g,u,u')$ and $(h,v,v')$ are equivalent if and only if
$g=h$ and $u' \circ \Ad_g \circ u = v \circ \Ad_g \circ v'$ as maps $\u\fk \to \u\fk'$. 
\end{defn}

We have an obvious forgetful functor $\bMC \to \Sub$, and it is easy to
see that the fibre $\bMC_{\u\fk}$ is \emph{canonically isomorphic} (not just equivalent)
to $\MC(\fL(\u\fk)) \sslash \exp\fL^0(\u\fk)$, and thus equivalent to $\Def_{\u\fk}$.

\begin{pro}\label{pro:def-mc}
There is an equivalence of categories between $\Def$ and $\bMC$, compatible
with the forgetful functors to $\Sub$.
\end{pro}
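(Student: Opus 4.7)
The plan is to construct an explicit functor $F : \bMC \to \Def$ compatible with the forgetful functors to $\Sub$, and then verify that it is essentially surjective and fully faithful.

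On objects, set $F(\u\fk, \phi) = (\fk_\phi, \iota_\phi)$, where $\fk_\phi$ is the filtered deformation of $\u\fk$ from Lemma \ref{lem:def-phi} and $\iota_\phi$ is the tautological identification $\gr\fk_\phi = \u\fk \hookrightarrow \fg$. On morphisms, send a representative $(g, u, u')$ to $(u' \circ \Ad_g \circ u, g)$, with $u \in U(\u\fk)$ and $u' \in U(\u\fk')$ viewed as filtered linear isomorphisms via the identifications of Lemma \ref{lem:def-u}, and $\Ad_g$ as the graded embedding $\u\fk \hookrightarrow \u\fk'$. Well-definedness on equivalence classes of triples is tautological; that $u' \circ \Ad_g \circ u$ is a filtered Lie algebra homomorphism $\fk_\phi \to \fk_{\phi'}$ follows from the commutative square in the definition of $\bMC$-morphisms, Lemma \ref{lem:def-u} (making $u, u'$ into Lie isomorphisms between deformed algebras), and the fact that $\Ad_g$ is a graded Lie algebra isomorphism onto $g\u\fk$. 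Since $\gr(u' \circ \Ad_g \circ u) = \Ad_g$, the pair $(u' \circ \Ad_g \circ u, g)$ satisfies the compatibility required in $\Def$; functoriality and compatibility with the projections to $\Sub$ are then routine. Essential surjectivity is immediate from the proof of Proposition \ref{pro:fibres}: given $(\fk, \iota)$ in $\Def$, a graded vector space splitting identifies $\fk$ with $\u\fk := \iota(\gr\fk)$ as filtered vector spaces, and the difference of brackets produces a Maurer-Cartan element $\phi$ with $F(\u\fk, \phi) \cong (\fk, \iota)$.

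Full faithfulness is the main step. Given a morphism $(\varphi, g) : (\fk_\phi, \iota_\phi) \to (\fk_{\phi'}, \iota_{\phi'})$ in $\Def$, Lemma \ref{lem:g-from-phi} already pins down $g$, so the task is to exhibit, uniquely up to triple-equivalence, elements $u \in U(\u\fk)$ and $u' \in U(\u\fk')$ with $u' \circ \Ad_g \circ u = \varphi$. Uniqueness is built into the definition of the equivalence relation. For existence, the key input is an elementary filtered-rotation lemma: if $V$ is a finite-dimensional graded vector space (with the induced filtration) and $W \subset V$ a graded subspace, then any filtered subspace $V^\sharp \subset V$ with $\gr V^\sharp = W$ is of the form $u'(W)$ for some $u' \in U(V)$, proved by choosing a homogeneous basis of $W$, picking filtered lifts into $V^\sharp$, and extending to a basis of $V$. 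Applied with $V = \u\fk'$, $W = \Ad_g \u\fk$, and $V^\sharp = \varphi(\u\fk)$ -- noting that $\gr\varphi(\u\fk) = \Ad_g \u\fk$ since $\gr\varphi = \Ad_g$ is injective -- this produces $u'$, whereupon $u := \Ad_g^{-1} \circ (u')^{-1} \circ \varphi$ is by construction a filtered endomorphism of $\u\fk$ with trivial associated graded, hence an element of $U(\u\fk)$, satisfying $u' \circ \Ad_g \circ u = \varphi$.

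I expect the principal obstacle to be the existence step above; once the filtered-rotation lemma is in place the rest of the argument is essentially bookkeeping, reducing to the fiberwise equivalence already established in Proposition \ref{pro:fibres}.
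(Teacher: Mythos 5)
Your proposal is correct and follows essentially the same route as the paper: the functor $F$ built from Lemmas \ref{lem:def-phi} and \ref{lem:def-u}, essential surjectivity via the fibrewise equivalence of Proposition \ref{pro:fibres}, and full faithfulness via the factorisation $\varphi = u'\circ\Ad_g\circ u$ with uniqueness absorbed by the equivalence relation on triples. Your ``filtered-rotation lemma'' simply supplies the detail behind the factorisation step that the paper asserts without proof, and it is a valid argument.
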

\begin{proof}
Just as in the proof of Proposition \ref{pro:fibres},
the construction of Lemma \ref{lem:def-phi} gives a rise to a functor
$$\begin{diagram}
\node{\bMC}\arrow{se}\arrow[2]{e,t}{F}\node{}\node{\Def}\arrow{sw} \\
\node{} \node{\Sub}
\end{diagram}$$
sending $(\u\fk,\phi)$ to $\fk_\phi$ together with the tautological
map $\iota_\phi:\gr\fk_\phi \to \u\fk\subset\fg$. A pair $(g, u)$, viewed as
a morphism $(\u\fk,\phi) \to (\u\fk',\phi')$ is sent by $F$ to 
the composite
$$ \fk_\phi = \u\fk \xrightarrow{g} \u\fk' \xrightarrow{u} \u\fk' = \fk'_{\phi'}. $$
The restriction of $F$ to a fibre is the equivalence $\bMC_{\u\fk} \to \Def_{\u\fk}$
of Proposition \ref{pro:fibres}. This in particular shows that $F$ is essentially surjective.
Consider now the induced map
$$
\Hom_{\bMC}((\u\fk,\phi), (\u\fk',\phi')) \xrightarrow{F} \Hom_{\Def}((\fk_\phi,\iota_\phi),
(\fk'_{\phi'}, \iota_{\phi'})) 
$$
on homsets. We shall construct its inverse. 
Let $U \subset \GL(\u\fk)$, $U' \subset \GL(\u\fk')$
be the unipotent subgroups identified with $\exp\fL^0(\u\fk)$, $\exp\fL^0(\u\fk')$
as in the proof of Proposition \ref{pro:fibres}.
Recall that the homset on the right hand side consists of pairs $(\varphi,g)$
where $\varphi : \fk_{\phi} \to \fk'_{\phi'}$ is a filtered Lie algebra homomorphism
and $g \in G_0$ is an element (unique by Lemma \ref{lem:g-from-phi})
such that $\iota_{\phi'} \circ \gr\varphi = \iota_\phi \circ \Ad_g$.
Since $\iota_\phi, \iota_{\phi'}$ are the tautological maps
arising from the identification $\fk_\phi = \u\fk$, $\fk'_{\phi'}=\u\fk'$ as filtered
vector spaces, the above condition is simply $\gr\varphi = \Ad_g$. It follows
that $\varphi : \u\fk \to \u\fk'$ may be factored as
$\varphi = u' \circ \Ad_g \circ u$ for some $u \in U$, $u' \in U'$. Furthermore,
the triple $(g,u,u')$ is unique up to equivalence, and thus yields a well-defined
element of the homset on the left hand side. It is easy to check that the resulting
map is indeed the inverse of $F$. 
\end{proof}

Although $\u\fk \mapsto \bMC_{\u\fk}$ does not
give a (pseudo)functor from $\Sub$ to groupoids,
it becomes functorial once we restrict to isomorphisms.
We will only need the following property.
\begin{lem}\label{lem:lift-iso}
An isomorphism $g:\u\fk \to \u\fk'$ in $\Sub$ induces an isomorphism 
$\bMC_g : \bMC_{\u\fk} \to \bMC_{\u\fk'}$ of groupoids
such that for each object $(\u\fk,\phi)$ of $\bMC_{\u\fk}$
there is an isomorphism $(\u\fk,\phi) \to \bMC_g(\u\fk, \phi)$ in $\bMC$.
\end{lem}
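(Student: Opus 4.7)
The plan is to leverage functoriality of $\fL : \GLA \to \DGLA$ to transport the Maurer-Cartan and gauge-action picture from $\u\fk$ to $\u\fk'$ along $\Ad_g$. The three steps are: (i) define $\bMC_g$ on objects and morphisms by pushforward; (ii) check it is an isomorphism of groupoids by invoking $\bMC_{g^{-1}}$; (iii) exhibit the comparison isomorphism $(\u\fk,\phi) \to \bMC_g(\u\fk,\phi)$ by hand.

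For step (i), since $g$ is an isomorphism in $\Sub$, the adjoint map $\Ad_g : \u\fk \to \u\fk'$ is an isomorphism of graded Lie algebras, hence $\fL(\Ad_g) : \fL(\u\fk) \to \fL(\u\fk')$ is an isomorphism of DGLAs; it carries Maurer-Cartan elements to Maurer-Cartan elements and intertwines the gauge actions of $\exp\fL^0(\u\fk)$ and $\exp\fL^0(\u\fk')$. I define $\bMC_g$ on an object $(\u\fk,\phi)$ as $(\u\fk', (\Ad_g)_*\phi)$, and on a morphism represented by $(\id_{\u\fk}, u, u')$ in the fibre $\bMC_{\u\fk}$ as the morphism represented by $(\id_{\u\fk'}, (\Ad_g)_*u, (\Ad_g)_*u')$. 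Well-definedness with respect to the equivalence relation and functoriality both reduce to functoriality of $\fL$. For step (ii), applying the same construction to $g^{-1}$ yields $\bMC_{g^{-1}}$, and the identity $\fL(\Ad_{g^{-1}}) = \fL(\Ad_g)^{-1}$ gives a strict two-sided inverse.

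For step (iii), I exhibit the triple $(g, \id_{\u\fk}, \id_{\u\fk'})$ as a morphism $(\u\fk,\phi) \to (\u\fk', (\Ad_g)_*\phi)$ in $\bMC$. Since $g$ is an isomorphism in $\Sub$, we have $g\u\fk = \u\fk'$, so both vertical arrows in the defining square of $\bMC$ are identities, while both horizontal maps equal $\Ad_g \circ \phi \circ (\Ad_{g^{-1}})^{\otimes 2}$; the diagram therefore commutes tautologically. The triple $(g^{-1}, \id, \id)$ provides a two-sided inverse, witnessing that this is indeed an isomorphism in $\bMC$.

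I do not anticipate any substantive obstacle: the whole argument is bookkeeping, and the only thing requiring care is to track that the operations $u\phi$, $\Ad_g u\phi$, $u'^{-1}\phi'$ appearing in the definition of $\bMC$ collapse as expected when $u$ and $u'$ are trivial and $g$ is an isomorphism. Everything then follows from functoriality of $\fL$ and the definition of $\bMC$ unpacked in the previous subsection.
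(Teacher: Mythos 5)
Your proposal is correct and follows essentially the same route as the paper: the paper's proof simply defines $\bMC_g$ by transporting $\phi$ along $\Ad_g$ (i.e. $\phi'(gX,gY)=g\phi(X,Y)$) and conjugating gauge elements $u\mapsto gug^{-1}$, which is exactly your pushforward via functoriality of $\fL$. Your extra details (the inverse $\bMC_{g^{-1}}$ and the explicit comparison morphism $(g,\id,\id)$, which the paper leaves implicit) are accurate and harmless.
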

\begin{proof}
Given an isomorphism $g : \u\fk \to \u\fk'$ in $\Sub$, 
we let the functor $\MC_g : \bMC_{\u\fk} \to \bMC_{\u\fk'}$ 
send $(\u\fk,\phi)$ to $(\u\fk', \phi')$ where $\phi'(gX,gY) = g\phi(X,Y)$
for all $X,Y \in \u\fk$. Its action on morphisms sends
$u \in \exp\fL^0(\u\fk)$ to $g u g^{-1} \in \exp\fL^0(\u\fk')$.
\end{proof}

Defining a category equivalent to $\bMC$ on the level of Kuranishi families
is somewhat cumbersome. Instead, we shall only extend the equivalence 
$\Def_{\u\fk} \approx M_{\u\fk}$ (discrete)
of Corollary \ref{cor:fibres} to a \emph{full} subcategory of $\Def$.
That is, we shall also have a way to represent morphisms
between objects of $\Def_{\u\fk}$ which map to a \emph{non-trivial}
automorphism of $\u\fk$ in $\Sub$. 
Let us first denote by
$G_0^{\u\fk} \subset G_0$ the stabiliser of $\u\fk$ under the adjoint action
and observe that it acts by automorphisms on $\u\fk$, and thus on
$\fL(\u\fk)$ and $\MC(\fL(\u\fk))$. Now, the action of $G_0^{\u\fk}$
is compatible with that of $\exp\fL^0(\u\fk)$, and thus it descends
to the quotient -- hence, also to $M_{\u\fk}$ for a Kuranishi family
$(M_{\u\fk}, \xi_{\u\fk})$.
\begin{defn}Let $\u\fk$ be an object of $\Sub$.
\begin{enumerate}
\item $\Def_{\u\fk}^*$ is the full subcategory of $\Def$ consisting of objects over $\u\fk$,
\item $\bMC_{\u\fk}^*$ is the full subcategory of $\bMC$ consisting of objects over $\u\fk$,
\end{enumerate}
\end{defn}

\begin{lem}\label{lem:bM}
Assume $H^0(\fL(\u\fk))=0$ and let $(M_{\u\fk},\xi_{\u\fk})$
be a Kuranishi family for $\fL(\u\fk)$.
Then there is an equivalence of categories between 
$\Def_{\u\fk}^*$ and $M_{\u\fk}\sslash G_0^{\u\fk}$.
\end{lem}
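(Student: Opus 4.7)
The plan is to chain together two equivalences. By Proposition~\ref{pro:def-mc}, which furnishes an equivalence $\Def \approx \bMC$ compatible with the forgetful functors to $\Sub$, I will first obtain $\Def_{\u\fk}^* \simeq \bMC_{\u\fk}^*$ simply by restricting to the full subcategories over $\u\fk$. It then suffices to construct an equivalence $\bMC_{\u\fk}^* \simeq M_{\u\fk}\sslash G_0^{\u\fk}$, which is the new content of the lemma.

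The natural candidate is the functor $F : \bMC_{\u\fk}^* \to M_{\u\fk}\sslash G_0^{\u\fk}$ sending $(\u\fk,\phi) \mapsto \pi(\phi)$ on objects and $[(g,u,u')] \mapsto g$ on morphisms. Well-definedness on morphisms is immediate, since the equivalence relation on triples preserves $g$. The defining commutativity in $\bMC$ forces $\Ad_g \phi$ and $\phi'$ to be gauge-equivalent, so $g \cdot \pi(\phi) = \pi(\phi')$ and $g$ is indeed an arrow of the action groupoid. Essential surjectivity is trivial via $\pi \circ \xi_{\u\fk} = \id$. For fullness I would, given $g$ with $g \cdot \pi(\phi) = \pi(\phi')$, set $u = \id$ and pick $u' \in \exp\fL^0(\u\fk)$ with $(u')^{-1}\phi' = \Ad_g \phi$; such $u'$ exists because both sides project to the same point of $M_{\u\fk}$ and are therefore in the same gauge orbit.

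I expect the main obstacle to be \emph{faithfulness}, and this is precisely where the hypothesis $H^0(\fL(\u\fk)) = 0$ is essential. Under the equivalence $\bMC_{\u\fk}^* \simeq \Def_{\u\fk}^*$, the class $[(g,u,u')]$ corresponds to the filtered Lie algebra homomorphism $u' \circ \Ad_g \circ u : \fk_\phi \to \fk_{\phi'}$ (as in the proof of Proposition~\ref{pro:def-mc}), and the equivalence relation on triples is engineered to be equality of these homomorphisms. Hence faithfulness reduces to the assertion that there is at most one filtered Lie algebra homomorphism $\fk_\phi \to \fk_{\phi'}$ with prescribed associated graded $\Ad_g$. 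Any two such would differ, after post-composition on one side, by a filtered automorphism of $\fk_{\phi'}$ with trivial associated graded; by Lemma~\ref{lem:def-u} this is an element of $\exp\fL^0(\u\fk)$ stabilising $\phi'$ under the gauge action. Since $H^0(\fL(\u\fk)) = 0$, Proposition~\ref{pro:kura} endows $\pi : \MC(\fL(\u\fk)) \to M_{\u\fk}$ with the structure of a principal bundle for the gauge group, in particular making the gauge action free; so the stabiliser is trivial and the two maps coincide, as needed.
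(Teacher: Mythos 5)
Your proposal is correct and follows essentially the same route as the paper: reduce to $\bMC_{\u\fk}^*$ via Proposition~\ref{pro:def-mc}, then compare with the Kuranishi/gauge picture of Proposition~\ref{pro:kura}, your functor $[(g,u,u')]\mapsto g$, $\phi\mapsto\pi(\phi)$ being exactly the inverse (on homsets) of the paper's functor $\xi^*$ built from the section $\xi_{\u\fk}$. Your explicit verification of faithfulness --- identifying the would-be discrepancy with a gauge element stabilising $\phi'$ and killing it by freeness of the gauge action under $H^0(\fL(\u\fk))=0$ --- is a sound elaboration of the step the paper dismisses as ``straightforward to check.''
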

\begin{proof}
By Proposition \ref{pro:def-mc} way may replace $\Def_{\u\fk}^*$ with $\bMC_{\u\fk}^*$. 
Recalling the equivalence
$$ \xi : M_{\u\fk} \to \bMC_{\u\fk} \simeq \MC(\fL(\u\fk))\sslash \exp\fL^0(\u\fk) $$
we need to extend $\xi$ to a
commutative diagram of homomorphisms of groupoids
$$\begin{diagram}
\node{ M_{\u\fk} } \arrow{s} \arrow{e,t}{\xi} \node{ \bMC_{\u\fk} } \arrow{s} \\
\node{ M_{\u\fk} \sslash G_0^{\u\fk}} \arrow{e,b}{\xi^*} \node{ \bMC_{\u\fk}^*. }
\end{diagram}$$
Acting on objects, $\xi^*$ sends $m \in M_{\u\fk}$ to $\xi(m) \in \MC(\fL(\u\fk))$.
Acting on morphisms, $\xi^*$ sends $g : m \to m'$ to
$[(g,\id,u)] : \xi(m) \to \xi(m')$ where
$u \in \exp\fL^0(\u\fk)$ is the unique element
such that $u\ast g\xi(m) = \xi'(m')$.
 Since
$\MC_{\u\fk}$ has the same set of objects as $\MC_{\u\fk}^*$
and $\xi$ is essentially surjective, so is $\xi^*$. Furthermore, it straightforward to check
that sending $[(g, u, u')]$ to $g$ gives a well-defined
inverse map to
$$\Hom_{M_{\u\fk}\sslash G_0^{\u\fk}}(m,m') \to \Hom_{\bMC_{\u\fk}^*}(\xi(m), \xi(m')),
\quad g \mapsto [(g,\id,u)] $$
so that $\xi^*$ is full and faithful.
\end{proof}

\section{Associated parabolic geometries}

\subsection{Introduction}
The results of the previous section
essentially already suggest a classification algorithm.
Using $\pi_0(-)$ to denote the set of isomorphism classes
of objects of a groupoid, we have by Lemma \ref{lem:gs-def} Proposition \ref{pro:def-mc}
that $\pi_0(\Germ)$ forms a subset of the set of isomorphism classes of objects
of the small category $\bMC$. 
By Lemma \ref{lem:bM} we may already identify
$\pi_0(\bMC_{\u\fk}^*) \simeq M_{\u\fk} / G_0^{\u\fk}$ for each $\u\fk$ in 
$\Sub$ satisfying $H^0(\fL(\u\fk))=0$. It will turn out that all 
subalgebras relevant for our classification problem do satisfy this condition.
Now, for two objects of $\bMC$ to be isomorphic, the underlying
graded subalgebras of $\fg$ must be conjugate by an element of $G_0$.
Hence, choosing for each $G_0$-conjugacy class a representative,
we may embed the relevant subset of $\pi_0(\Germ)$ 
into the disjoint union of $M_{\u\fk}/G_0^{\u\fk}$
as $\u\fk$ runs through the chosen representatives.
The Kuranishi families and their quotients may be efficiently computed in a completely
algorithmic way. The remaining problem is to identify those points in each $M_{\u\fk} / G_0^{\u\fk}$
that correspond to a class of objects of $\Germ$. 

Recall that the embedding
$\Germ \to \GermSym \approx \bMC$ admits a left adjoint, giving rise to an endofunctor
(idempotent monad) on $\GermSym$ sending $(\sD,\fk)$ to $(\sD, \sym\sD)$. That is,
the objects of $\Germ$ are precisely those satisfying $\fk = \sym\sD$
(or just $\dim\fk=\dim\sym\sD$),
need to recognise them is to be able to compute the symmetry algebra $\sym\sD$ (or just
its dimension). That can be achieved using the methods of parabolic geometry.

\subsection{Algebraic Cartan connections}

\begin{defn}\label{def:acc}
Let $\fk$ be a Lie algebra, and $\fl \subset \fk$ a subalgebra.
An algebraic Cartan connection of type $(\fg,\fp)$ on $(\fk,\fl)$
is a linear map $\bar\omega : \fk \to \fg$ such that
\begin{enumerate}
 \item $\bar\omega(\fl) \subset\fp$, inducing
 \item $\fk/\fl \to \fg/\fp$ an isomorphism,
 \item $\bar\omega([X,Y]) = [\bar\omega X, \bar\omega Y]$ whenever $X \in \fl$.
\end{enumerate}
\end{defn}

\begin{defn}
Let $\fk$ be a filtered Lie algebra. An algebraic Cartan connection
$\bar\omega : \fk \to \fg$ on $(\fk,\fk^0)$ is \emph{regular} if it
is filtration preserving, and if $\gr\bar\omega : \gr\fk \to \fg$ is
a graded Lie algebra homomorphism.
\end{defn}

\begin{defn}
$\Cartan^a$ is the category whose objects are pairs $(\fk,\bar\omega)$
where $\fk$ is a filtered Lie algebra, and $\bar\omega : \fk \to \fg$ an \emph{injective}
regular algebraic Cartan connection of type $(\fg,\fp)$ on $(\fk,\fk^0)$. Its
morphisms from $(\fk,\bar\omega)$ to $(\fk',\bar\omega')$ are pairs
$(\varphi,p)$ where $\varphi:\fk\to\fk'$ is a filtered Lie algebra homomorphism
and $p \in P$ is such that $\Ad_p \circ \bar\omega = \bar\omega\circ\varphi$.
\end{defn}

Observe that the assignment sending $(\fk,\bar\omega)$ to $(\fk,\gr\bar\omega)$
extends to a forgetful functor
$$
\Cartan^a \to \Def.
$$
Given an algebraic Cartan connection $\bar\omega \in \fk^* \otimes \fg$
on $(\fk,\fl)$,
we define its curvature $\bar\Omega \in \Lambda^2\fk^* \otimes \fg$
by the formula
$$ \bar\Omega = d^\fk \bar\omega + \frac{1}{2}[\bar\omega\wedge\bar\omega]_\fg $$
where $d^\fk$ denotes the differential in the complex
$C^\bullet(\fk) \otimes \fg$ (i.e. with $\fk$ acting trivially on $\fg$),
and $[,]_\fg : \Lambda^2\fg \to \fg$ is the bracket in $\fg$. By the property
(3) in Definition \ref{def:acc}, $\bar\Omega : \Lambda^2\fk \to \fg$
factors through $\Lambda^2(\fk/\fl)$. Then, by property (2) we may form
the `curvature function'
$\bar\kappa \in \Lambda^2(\fg/\fp)^* \otimes \fg$
defined unambigously by
$$ \bar\Omega(X,Y) = \kappa(\bar\omega(X),\bar\omega(Y))\quad \textrm{for all}\quad X,Y \in \fk. $$
As is customary, we identify $(\fg/\fp)^*$ with $\fp_+$ as representations of $P$,
and view $\bar\kappa$ as a $2$-chain in the complex $C_\bullet(\fp_+,\fg)$ computing
Lie algebra homology of $\fp_+$ with values in $\fg$.
If $\fk$ is filtered, $\fl=\fk^0$ and $\bar\omega$ is regular, it follows
that $\bar\kappa \in C_\bullet(\fp_+,\fg)^1$. We say that $\bar\omega$ is
\emph{normal} if $\bar\kappa$ is a $2$-cycle.

\begin{lem}\label{lem:curv-inv}
$\bar\kappa \in C_2(\fp_+,\fg)$ is annihilated by the adjoint action of 
$\bar\omega(\fl) \subset \fp$.
\end{lem}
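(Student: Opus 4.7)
The plan is to verify the $\fl$-equivariance of $\bar\omega$ from property (3), propagate it through the defining formula for $\bar\Omega$, and finally transfer the resulting equivariance to $\bar\kappa$ via the isomorphism $\fk/\fl \xrightarrow{\sim} \fg/\fp$ induced by $\bar\omega$.

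First, I would observe that property (3) of Definition \ref{def:acc} is precisely the statement that $\bar\omega:\fk\to\fg$ intertwines $\ad_X$ on $\fk$ with $\ad_{\bar\omega(X)}$ on $\fg$ for $X\in\fl$. Equivalently, $\bar\omega$ is an $\fl$-invariant element of $C^1(\fk,\fg)$, where $\fl$ acts on $\fk$ by $\ad$ and on $\fg$ by $\ad\circ\bar\omega|_\fl$. Since both the Chevalley--Eilenberg differential $d^\fk$ of the trivial module $\fg$ and the bracket $[,]_\fg:\Lambda^2\fg\to\fg$ are manifestly $\fl$-equivariant (the latter because $\bar\omega(\fl)$ acts on $\fg$ by derivations), it follows that $\bar\Omega=d^\fk\bar\omega+\frac{1}{2}[\bar\omega\wedge\bar\omega]_\fg$ is $\fl$-invariant as an element of $C^2(\fk,\fg)$.

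Second, I would rewrite this invariance explicitly: for $X\in\fl$ and $Y,Y'\in\fk$,
$$ [\bar\omega(X),\bar\Omega(Y,Y')] = \bar\Omega([X,Y],Y') + \bar\Omega(Y,[X,Y']). $$
The vanishing $\bar\Omega(X,Y)=0$ for $X\in\fl$ (which already underlies the factorisation of $\bar\Omega$ through $\Lambda^2(\fk/\fl)$, and follows directly from (3) combined with the formula for $\bar\Omega$) lets me translate everything to $\fg/\fp$ via the isomorphism $\bar\omega:\fk/\fl\xrightarrow{\sim}\fg/\fp$ from property (2). Writing $v=\bar\omega(Y)\bmod\fp$, $w=\bar\omega(Y')\bmod\fp$ and $Z=\bar\omega(X)\in\fp$, property (3) gives $\ad_Z v = \bar\omega([X,Y])\bmod\fp$ and similarly for $w$, so the displayed equation becomes exactly
$$ [Z,\bar\kappa(v,w)] - \bar\kappa(\ad_Z v,w) - \bar\kappa(v,\ad_Z w) = 0. $$

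Finally, I would identify the resulting equation with the statement to be proved. Using the $P$-equivariant identification $(\fg/\fp)^*\simeq\fp_+$ (coming from the Killing form, which pairs $\fg_{-i}$ non-degenerately with $\fg_i$), the adjoint action of $Z\in\fp$ on $\bar\kappa\in\Lambda^2\fp_+\otimes\fg$ is transported to precisely the left-hand side above. So $Z\cdot\bar\kappa=0$ for every $Z\in\bar\omega(\fl)$. The argument is a routine chain of equivariance steps and I do not anticipate a real obstacle; the only point that warrants minimal care is the compatibility of the $\fp$-action on $\Lambda^2\fp_+\otimes\fg$ with the one on $\Lambda^2(\fg/\fp)^*\otimes\fg$, which is guaranteed by the $\fp$-invariance of the pairing.
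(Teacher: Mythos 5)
Your argument is correct and matches the paper's (very terse) proof in substance: the paper's ``algebraic Bianchi identity'' is exactly the identity $[\bar\omega(X),\bar\Omega(Y,Y')]=\bar\Omega([X,Y],Y')+\bar\Omega(Y,[X,Y'])$ for $X\in\fl$, which you obtain from the $\fl$-equivariance of $d^\fk$ and $[\,\cdot\wedge\cdot\,]_\fg$ together with property (3), and you then transfer it to $\bar\kappa$ through horizontality and the isomorphism $\fk/\fl\simeq\fg/\fp$, just as intended. No gaps; your version simply spells out what the paper compresses into one sentence.
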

\begin{proof}
This is an algebraic version of the Bianchi identity, using the definition
of $\bar\kappa$ and $\bar\Omega$ and property (3) of Definition \ref{def:acc}.
\end{proof}

The key result connecting algebraic Cartan connections to filtered deformations
of graded subalgebras of $\fg$ is the following Proposition. Its proof will
be given in the next subsection, after a further excursion into Cartan geometries
on homogeneous spaces.
\begin{pro}\label{pro:acc}
Let $\Cartan^a_n$ denote the full subcategory of $\Cartan^a$ consisting of
objects $(\fk,\bar\omega)$ where $\bar\omega$ is normal. Then
the restriction of the forgetful functor induces an equivalence
of categories between $\Cartan^a_n$ and $\Def$.
\end{pro}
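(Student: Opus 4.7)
The plan is to adapt to this purely algebraic setting the \v{C}ap--Schichl construction of the canonical regular normal Cartan connection extending a regular infinitesimal flag structure (\cite{CS}, Theorem~3.1.14); the argument splits into essential surjectivity, fullness and faithfulness of the forgetful functor $\Cartan^a_n \to \Def$.

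\emph{Essential surjectivity.} Given $(\fk,\iota)$ in $\Def$, choose a linear splitting identifying $\fk$ with $\gr\fk$ as filtered vector spaces. Composed with $\iota$ this yields a filtration-preserving linear injection $\bar\omega_0 : \fk \to \fg$ whose associated graded is $\iota$ and which satisfies the axioms of an algebraic Cartan connection (Definition~\ref{def:acc}) \emph{modulo terms of strictly positive filtered degree}; its curvature function $\bar\kappa_0$ lies in $C_2(\fp_+,\fg)^1$. I would then modify $\bar\omega_i$ iteratively by adding corrections valued in $\fg^i$ to produce $\bar\omega_{i+1}$ with $\partial^*\bar\kappa_{i+1}$ lying one filtration degree deeper than $\partial^*\bar\kappa_i$. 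At each stage the obstruction is handled via the Kostant Hodge decomposition $C_\bullet(\fp_+,\fg) = \ker\partial^* \oplus \operatorname{im}\partial^*$ together with the Bianchi identity (Lemma~\ref{lem:curv-inv}), which force the obstruction into $\operatorname{im}\partial^*$ and thus allow it to be killed. Finite-dimensionality and positivity of the grading on $\fp_+$ terminate the procedure, yielding a normal $\bar\omega$ with $\gr\bar\omega = \iota$.

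\emph{Fullness.} Let $(\varphi,g) : (\fk,\iota) \to (\fk',\iota')$ be a morphism in $\Def$ and pick normal extensions $\bar\omega, \bar\omega'$. Take any lift $p_0 \in P$ of $g$ under the projection $P \to P/P_+ \simeq G_0$. The discrepancy $\bar\omega' \circ \varphi - \Ad_{p_0}\circ \bar\omega$ vanishes on the associated graded and so lies in filtration degree $\ge 1$. I would iteratively correct $p_0$ by left-multiplication by elements of $P_+$, using the same Kostant--Hodge mechanism as above (now in the linear rather than quadratic regime, with the relevant cycle condition arising from compatibility of $\bar\omega,\bar\omega'$ with the Lie brackets) to kill the discrepancy degree by degree, producing $p \in P$ with $\Ad_p\circ\bar\omega = \bar\omega'\circ\varphi$ lifting $g$.

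\emph{Faithfulness.} If $(\varphi,p),(\varphi,p')$ both lift $(\varphi,g)$, then $q := p^{-1}p' \in P_+$ and $\Ad_q$ fixes every element of $\bar\omega(\fk)$. Writing $q = \exp Y$ with $Y = Y_1 + Y_2 + \cdots$, $Y_i \in \fg_i$, and using $\bar\omega(\fk) + \fp = \fg$, assume inductively $Y_1 = \cdots = Y_{i-1} = 0$; for each $X \in \fg_{-j}$ pick $v_X \in \bar\omega(\fk)$ with $v_X \equiv X \pmod{\fp}$, so that the lowest-degree component of $(\Ad_q - \id)(v_X) = 0$ reads $[Y_i, X] = 0$ in $\fg_{-j+i}$, giving $[Y_i, \fg_-] = 0$. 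Injectivity of $\ad : \fg_i \to \Hom(\fg_{-1}, \fg_{i-1})$ for $i>0$, a defining property of the Tanaka prolongation, then forces $Y_i = 0$, and induction gives $p = p'$. The main obstacle is the inductive step of essential surjectivity---verifying that the obstruction to improving normality genuinely lies in $\operatorname{im}\partial^*$, a Bianchi identity computation exploiting the normality already achieved at lower orders; fullness and faithfulness are then, respectively, a linear relative version of the same argument and a direct computation with the Tanaka prolongation.
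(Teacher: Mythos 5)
Your route is genuinely different from the paper's: instead of passing to invariant geometries on $K/K^0$ (Lemmas \ref{lem:acc}, \ref{lem:curvs}) and quoting the existence--uniqueness theorem behind Proposition \ref{pro:equiv-para}, you re-run the Kostant normalisation directly on the algebraic Cartan connection. Your faithfulness argument is correct. But the other two parts have genuine gaps. For fullness: after reducing to two regular normal connections $\fk\to\fg$ with the same associated graded, your degree-by-degree correction only removes what conjugation by $\exp(\fg_i)\subset P_+$ can remove. Normality of both connections does force the lowest-homogeneity discrepancy $\phi_i$ to be a cocycle in $C^1(\fg_-,\fg)_i$ (via $\partial^*\partial\phi_i=0$ and the usual inner-product argument), but a cocycle is killable by $\Ad_{\exp Y}$, $Y\in\fg_i$, only if it is \emph{exact}: its harmonic part is an element of $H^1(\fg_-,\fg)_i$ and is an honest obstruction. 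So your induction closes only because $H^1(\fg_-,\fg)^1=0$ --- precisely the fact the paper imports from~\cite{ANN} when invoking Theorem 3.1.14 of~\cite{CS} --- and you never invoke it; for a general graded $\fg$ the statement (and your step) would fail. Relatedly, the decomposition you write, $C_\bullet=\ker\partial^*\oplus\mathrm{im}\,\partial^*$, is not correct ($\mathrm{im}\,\partial^*\subset\ker\partial^*$); what the mechanism uses is $C=\mathrm{im}\,\partial\oplus\ker\partial^*$, resp.\ the three-term Kostant decomposition.

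For essential surjectivity: the map $\bar\omega_0$ obtained from a filtered splitting satisfies conditions (1)--(2) of Definition \ref{def:acc} but in general not (3), and without (3) the ``curvature function'' $\bar\kappa_0\in C_2(\fp_+,\fg)$ is not even defined, since $\bar\Omega$ need not descend to $\Lambda^2(\fk/\fk^0)$. Your iteration, as described, corrects $\partial^*\bar\kappa$ degree by degree; it says nothing about establishing (3), i.e.\ about making $\bar\omega|_{\fk^0}$ a genuine Lie algebra homomorphism into $\fp$ and $\bar\omega$ equivariant under it. This is exactly the part of the \v{C}ap--Schichl argument that is carried by the step-by-step construction of the principal bundle (equivariance and horizontality of the curvature are built into the bundle, not into a $1$-form), and in the paper it is obtained geometrically: Lemma \ref{lem:kk0} produces the group-level lift $K^0\to P$, and Proposition \ref{pro:equiv-para} together with Lemma \ref{lem:acc} delivers an invariant normal geometry whose pullback by the orbit map satisfies (3) automatically. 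A purely algebraic proof along your lines is possible, but you must build the $\fk^0$-equivariance into the inductive scheme (here Lemma \ref{lem:curv-inv} and again cohomological vanishing enter), not fold it into ``modulo higher order'' and hope the normalisation step repairs it; note that the special shape $\u\fk=\fg_-\oplus\u\fk_0$ of Lemma \ref{lem:prol} is not available at this level of generality, since Proposition \ref{pro:acc} is asserted for all objects of $\Def$.
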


\subsection{Geometries on a homogeneous space}
Recall the notation of subsection \ref{ss:para}. We introduce a number
of notions describing invariant Cartan geometries on a fixed homogeneous
space. The qualification `of type
$(\fg,P)$' is implicitly understood.
\begin{defn} Let $K$ be a Lie group, and $L$ a closed subgroup.
\begin{enumerate}
\item An \emph{equivariant $P$-principal bundle} on $K/L$ is a right $P$-principal bundle
$\sG \to K/L$ together with a left $K$-action on $\sG$ commuting with the right
$P$-action.
\item A type Cartan connection $\omega \in \Omega^1_\sG \otimes \fg$
on an equivariant $P$-principal bundle $\sG \to K/L$
is \emph{invariant} if $L_k^*\omega=\omega$ for all $k\in K$.
\item An \emph{invariant Cartan geometry} 
on $K/L$ is an equivariant $P$-principal
bundle $\sG \to K/L$ together with an invariant Cartan connection $\omega \in \Omega^1_\sG\otimes
\fg$. 
\end{enumerate}
\end{defn}

Given a pair of Cartan geometries $(\sG,\omega)$ and $(\sG',\omega')$
of type $(\fg,P)$ on $M$, a gauge transformation between the two is a bundle morphism
$f : \sG \to \sG'$ such that $f^*\omega'=\omega$. Note that for $M$ connected,
$f$ is uniquely determined by the image of a single point of $\sG$. In particular,
the only gauge transformation from $(\sG,\omega)$ to itself is the identity.

Given Cartan geometries $(\sG\to M, \omega)$,
$(\sG'\to M', \omega')$ of type $(\fg,P)$ and 
a local diffeomorphism $h : M \to M'$, one forms 
the pullback Cartan geometry $(h^*\sG, \tilde h^*\omega)$
where $\tilde h : h^*\sG'\to\sG'$ is the natural projection,
and considers gauge transformations $f : \sG \to h^*\sG'$. These are,
equivalently, local diffeomorphisms $\bar f :\sG \to \sG'$ lifting $h$ and such that
$\bar f^*\omega'=\omega$. Furthermore, given
just a germ $h$ at $m \in M$ of a local diffeomorphism $M \to M'$, 
it makes sense to consider germs of a local gauge transformations
from $(\sG,\omega)$ to $h^*(\sG',\omega')$. These may be identified
with equivalence classes of pairs $(U, f_U)$ where $U \subset M$
is an open neighbourhood of $m$, and $f_U : \sG|_U \to \sG'$
is a local diffeomorphism such that $f_U^*\omega'=\omega$ and
$f|_U$ descends to a local diffeomorphism $U\to M'$ representing $h$.

\begin{defn}
$\Cartan^g$ is the category whose objects are $(K,L,\sG,\omega)$ where 
$K$ is a Lie group, $L$ a connected closed subgroup 
and $(\sG,\omega)$
an invariant regular Cartan geometry on $K/L$ such that
$L$ acts freely in the fibre of $\sG$ over the origin.
Its morphisms
from $(K,L,\sG,\omega)$ to $(K',L',\sG',\omega)$ are pairs $(\varphi,f)$
where $\varphi:\fk \to \fk'$ is a Lie algebra monomorphism,
$\varphi(\fl) \subset \fl'$ and $f$ is a germ at the origin of a local gauge transformation 
from $(\sG,\omega)$ to $h^*_\varphi(\sG',
\omega')$,
with $h_\varphi$ being the germ at the origin of a local diffeomorphism $K/L \to K'/L'$ 
induced by $\varphi$.
\end{defn}

\begin{lem}\label{lem:baromega}
Let $(K,L,\sG,\omega)$ be an object of $\Cartan^g$ together
with a point $e \in \sG$ over the origin. Then the orbit map
$$\lambda_e : K \to \sG,\quad k\mapsto ke $$
induces a Lie group monomorphism $L \to P$, and $\lambda_e^*\omega$
viewed as a left-invariant $\fg$-valued one-form on $K$
is an algebraic Cartan connection on $\fk$.
\end{lem}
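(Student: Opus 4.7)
My plan is to verify each claim in turn, exploiting $K$-invariance of $\omega$ to reduce global statements on $K$ to pointwise statements at the identity.

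First, for the homomorphism $L \to P$: since $L$ fixes the origin $eL \in K/L$ and the projection $\sG \to K/L$ is $K$-equivariant, $\lambda_e(L)$ lies in the fiber over the origin, which is a principal $P$-torsor under $e$. Associativity of the commuting $K$- and $P$-actions then gives a unique $p : L \to P$ with $l\cdot e = e\cdot p(l)$, and a direct check shows $p$ is a Lie group homomorphism; injectivity is exactly the freeness hypothesis built into $\Cartan^g$. Differentiating, we obtain a Lie algebra map $\varphi_* : \fl \to \fp$ satisfying $X_\sG(e) = (\varphi_*X)^*(e)$, where $(-)_\sG$ and $(-)^*$ denote fundamental vector fields of the $K$- and $P$-actions respectively.

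Next, $\lambda_e^*\omega$ is left-invariant because $\lambda_e \circ L_k = L_k^\sG \circ \lambda_e$ and $\omega$ is $K$-invariant, whence $L_k^*\lambda_e^*\omega = \lambda_e^*(L_k^\sG)^*\omega = \lambda_e^*\omega$. To verify the three conditions of Definition \ref{def:acc}, I would evaluate at the identity of $K$:
\begin{enumerate}
\item For $X \in \fl$, $(\lambda_e^*\omega)(X) = \omega_e(X_\sG(e)) = \omega_e((\varphi_*X)^*(e)) = \varphi_*X \in \fp$ by the defining property of a Cartan connection on fundamental vertical vectors.
\item The Cartan connection isomorphism $\omega_e : T_e\sG \to \fg$ sends the vertical subspace $V_e$ onto $\fp$, hence descends to an iso $T_e\sG/V_e \to \fg/\fp$. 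Composing with the iso $\fk/\fl \to T_{eL}(K/L) \simeq T_e\sG/V_e$ induced by $d\lambda_e$ (an iso because $K$ acts transitively on $K/L$ with stabiliser $L$) yields the required isomorphism.
\end{enumerate}

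The most delicate point is condition (3), the Lie bracket preservation for $X \in \fl$. Here the plan is to exploit the Cartan structure equation:
\begin{equation*}
\Omega = d\omega + \tfrac{1}{2}[\omega,\omega],
\end{equation*}
pulled back along $\lambda_e$. Since pullback commutes with $d$ and the bracket operation, $\lambda_e^*\Omega$ is again built from the left-invariant form $\lambda_e^*\omega$ and is therefore left-invariant on $K$. At the identity, for $X \in \fl$ and arbitrary $Y \in \fk$, the tangent vector $d\lambda_e(X) = X_\sG(e)$ is vertical, and the curvature of a Cartan connection is horizontal (vanishes when contracted with any vertical vector); thus $(\lambda_e^*\Omega)_{e_K}(X,Y)=0$, and by left-invariance $(\lambda_e^*\Omega)(X,Y) \equiv 0$ on all of $K$. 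Evaluating the structure equation on left-invariant fields and using $d\alpha(X,Y) = -\alpha([X,Y])$ for left-invariant $\alpha$ on $K$, this yields precisely
\begin{equation*}
\lambda_e^*\omega([X,Y]) = [\lambda_e^*\omega(X),\, \lambda_e^*\omega(Y)]
\end{equation*}
whenever $X \in \fl$, which is condition (3). The main obstacle is thus the horizontality of the curvature; everything else is a direct unwinding of definitions and the compatibility of left multiplication on $K$ with the $K$-action on $\sG$.
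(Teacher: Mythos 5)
Your proposal is correct and follows essentially the same route as the paper's own (much terser) proof: the homomorphism $L \to P$ from the $P$-torsor structure of the fibre with injectivity by the freeness hypothesis, condition (1) because $L$ fixes the origin, condition (2) from the Cartan-connection isomorphism $T_e\sG \simeq \fg$ combined with $d\lambda_e$, and condition (3) by pulling back the structure equation and using horizontality of the Cartan curvature. Your write-up simply unwinds in detail the three one-line justifications the paper gives, so there is nothing to add beyond noting that the paper's sign convention $\ell e = e p^{-1}$ differs harmlessly from your $\ell e = e\,p(\ell)$.
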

\begin{proof}
The homomorphism $L \to P$ maps $\ell \in L$ to $p \in P$ such that
$\ell e = e p^{-1}$. It is injective by freeness of the action of $L$
on the fibre $eP$. Furthermore, letting $\bar\omega = \lambda_e^*\omega$
we have (1) $\bar\omega(\fl) \subset \fp$ since $L$ preserves the origin;
(2) $\fk/\fl \to \fg/\fp$ is an isomorphism since $\omega$ is a Cartan connection
over $K/L$; (3) $[\bar\omega X,\bar\omega Y] - \bar\omega([X,Y]) = 0$ for $X \in \fl$
by horizontality of the Cartan curvaure of $\omega$.
\end{proof}

\begin{lem}\label{lem:kk0}
Let $(\fk,\bar\omega)$ be an object of $\Cartan^a$.
Then there exists a Lie group $K$ with Lie algebra $\fk$,
and a Lie group monomorphism $K \supset K^0 \to P$ lifting $\bar\omega : \fk^0 \to \fp$.
\end{lem}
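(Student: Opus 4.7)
My plan is first to observe that $\bar\omega$ restricts to a Lie algebra monomorphism on $\fk^0$, then to integrate this restriction to a Lie group monomorphism $K^0 \to P$ for a suitably chosen $K^0$, and finally to exhibit an ambient Lie group $K$ with Lie algebra $\fk$ in which this $K^0$ sits as a Lie subgroup.

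First I would note that property (1) of Definition \ref{def:acc} gives $\bar\omega(\fk^0) \subset \fp$, while applying property (3) to arbitrary $X,Y \in \fk^0$ shows that $\bar\omega|_{\fk^0} : \fk^0 \to \fp$ is a Lie algebra homomorphism; injectivity is inherited from $\bar\omega$. Thus we are handed an honest Lie algebra monomorphism from $\fk^0$ into $\fp$.

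Next I would let $\hat K^0$ denote the connected, simply connected Lie group with Lie algebra $\fk^0$. The standard Lie-theoretic integration theorem then yields a unique Lie group homomorphism $\hat\psi_0 : \hat K^0 \to P$ lifting $\bar\omega|_{\fk^0}$, and its kernel $N$ is automatically a discrete central subgroup of $\hat K^0$. Setting $K^0 := \hat K^0 / N$ produces an injective Lie group homomorphism $\psi_0 : K^0 \to P$ with $d\psi_0 = \bar\omega|_{\fk^0}$.

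Finally, I need to realize $K^0$ as a Lie subgroup of some Lie group $K$ with Lie algebra $\fk$, inducing the given inclusion $\fk^0 \subseteq \fk$ on Lie algebras. Starting from the simply connected Lie group $\tilde K$ with Lie algebra $\fk$ and its connected Lie subgroup $\tilde K^0$ with Lie algebra $\fk^0$, one has $\tilde K^0 = \hat K^0 / \Gamma$ for $\Gamma = \pi_1(\tilde K^0) \subset \hat K^0$. In the fortunate case $\Gamma \subseteq N$ (in particular when $\tilde K^0$ is already simply connected) the morphism $\hat\psi_0$ descends through $\tilde K^0$, so one may take $K = \tilde K$ directly. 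In the general case the appropriate $K$ must be obtained by a gluing construction which assembles $\tilde K$ and the covering $\hat K^0 \to \tilde K^0$ into a single Lie group in which the chosen quotient $\hat K^0 / N$ sits in place of $\tilde K^0$. I expect this last Lie-theoretic step — the honest construction of $K$ in full generality, since $\tilde K$ being simply connected obstructs the naive attempt to cover it — to be the main technical difficulty in the proof.
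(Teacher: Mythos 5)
There is a genuine gap, and it sits exactly where you say you expect the "main technical difficulty" to be: the construction of $K$ is not an afterthought but the entire content of the lemma, and your proposal leaves it unresolved. Two concrete problems. First, even your "fortunate case" is off: if $\Gamma\subseteq N$, the map $\hat\psi_0$ descends to a homomorphism $\tilde K^0\to P$ with kernel $N/\Gamma$, which is a monomorphism only when $\Gamma=N$, so taking $K=\tilde K$ does not by itself produce the required injective lift. Second, the "gluing construction" you appeal to in general does not exist at the level of generality you are working in: for \emph{any} connected $K$ with Lie algebra $\fk$, the integral subgroup $K^0$ for $\fk^0$ is automatically a quotient of $\tilde K^0$, so you cannot install the prescribed cover $\hat K^0/N$ "in place of" $\tilde K^0$ unless $\Gamma\subseteq N$ and the extra quotient is realized by a central subgroup of $\tilde K$. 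Indeed, with only the data you actually use (an injective Lie algebra homomorphism $\fk^0\to\fp$), the statement is false: take $\fk=\mathfrak{su}(2)$, $\fk^0=\mathfrak{so}(2)$, and an injection of $\fk^0$ into the Lie algebra of a simply connected abelian group; every integral subgroup for $\fk^0$ in every group integrating $\fk$ is a circle and admits no injective homomorphism to that target. So the lemma cannot be proved by soft integration theory alone; the specific structure of objects of $\Cartan^a$ must enter.

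That structure is what the paper uses to make a good choice of $K$ outright. Since $\gr\bar\omega$ embeds $\gr\fk$ into $\fg$ with image containing $\fg_-$, the universal property of the Tanaka prolongation (injectivity of $\fg\to\Hom(\fg_-,\fg)$) forces the adjoint representation of $\gr\fk$, hence of $\fk$, to be faithful; one then takes $K\subset\GL(\fk)$ to be the connected subgroup with Lie algebra $\ad(\fk)$. Next, let $P_\fk\subset P$ be the stabiliser of the filtered subspace $\bar\omega(\fk)\subset\fg$; because the associated graded of $\bar\omega(\fk)$ contains $\fg_-$, the induced homomorphism $P_\fk\to\GL(\fk)$ is injective, and its image contains, on the Lie algebra level, $\ad(\fk^0)$ (property (3) of Definition 7 shows $\bar\omega(\fk^0)$ preserves $\bar\omega(\fk)$). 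Since $K^0$ is connected, it lies in the image of $P_\fk$, and composing with the inverse of the injection $P_\fk\to\GL(\fk)$ yields the desired Lie group monomorphism $K^0\to P$ lifting $\bar\omega|_{\fk^0}$. In short: realizing both $K^0$ and the relevant subgroup of $P$ inside the single group $\GL(\fk)$ sidesteps all covering and gluing issues, and this is precisely the step your proposal is missing.
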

\begin{proof}
By the universal property of Tanaka prolongation, the map
$\fg \to \Hom(\fg_-, \fg)$ is injective, and so is thus the
adjoint representation $\gr\fk \to \End\gr\fk$. Since
the adjoint representation of $\gr\fk$ is the associated graded
map of the adjoint representation of $\fk$, it follows that
the latter is faithful. We may then let $K$ be the connected
subgroup of $\GL(\fk)$ with Lie algebra $\fk \subset \End\fk$.
Now, let $P_\fk \subset P$ be the stabiliser of
$\bar\omega(\fk) \subset \fg$ for the adjoint action. Since
$\gr\bar\omega(\gr\fk) \supset \fg_-$, it follows that the
natural Lie group homomorphism $P_\fk \to \GL(\fk)$ induced
by $\bar\omega$ is injective. In particular, we have 
a Lie algebra monomorphism $\fp_{\fk} \to \End\fk$
whose image contains $\fk^0$; the resulting homomorphism
$\fk^0 \to \fp$ coincides with the restriction of $\bar\omega$.
Since $K^0$ is connected, it follows that $K^0 \subset P_\fk$
as subgroups of $\GL(\fk)$, giving rise to the desired Lie group
homomorphism $K^0 \to P$ lifting $\bar\omega$. 
\end{proof}

The essence of the following result is contained in\cite[Prop. 1.5.15]{CS}.
It has an important interpretation in terms of the practical implementation
of homogeneous parabolic geometries in {\tt DifferentialGeometry}; I am
immensely indebted to Ian Anderson for our discussions on these matters.

\begin{lem}\label{lem:acc}
There is an equivalence of categories between $\Cartan^a$ and $\Cartan^g$.
\end{lem}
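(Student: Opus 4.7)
The plan is to exhibit quasi-inverse functors $F:\Cartan^g\to\Cartan^a$ and $G:\Cartan^a\to\Cartan^g$, using the two preceding lemmas as the object-level building blocks. For $F$, choose for each object $(K,L,\sG,\omega)$ a basepoint $e\in\sG$ over the origin and set $F(K,L,\sG,\omega) = (\fk,\lambda_e^*\omega)$, which is an injective regular algebraic Cartan connection by Lemma \ref{lem:baromega}: injectivity follows from freeness of the $L$-action on the fibre combined with property (2) of Definition \ref{def:acc}, and regularity is inherited from that of $\omega$. For a morphism $(\varphi,f)$, the germ $f$ carries the chosen $e$ to a unique $e'\!\cdot\! p^{-1}$ with $p\in P$, and one puts $F(\varphi,f)=(\varphi,p)$; pulling the identity $f^*\omega'=\omega$ back along the respective orbit maps yields $\Ad_p\circ\lambda_e^*\omega = (\lambda_{e'}^*\omega')\circ\varphi$, which is exactly the compatibility required of a morphism in $\Cartan^a$.

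For $G$, start with $(\fk,\bar\omega)$ in $\Cartan^a$ and apply Lemma \ref{lem:kk0} to obtain a Lie group $K$ and a monomorphism $K^0\to P$ lifting $\bar\omega|_{\fk^0}$. Form the associated $P$-principal bundle $\sG = K\times^{K^0}P\to K/K^0$ with its natural commuting left $K$-action, and define an invariant Cartan connection $\omega$ by the prescription that its pullback to $K\times P$ equals $\Ad_{p^{-1}}(\bar\omega\circ\theta_K) + \theta_P$, with $\theta_K,\theta_P$ the Maurer--Cartan forms on the two factors. Property (3) of Definition \ref{def:acc} is precisely what ensures this form descends through the diagonal $K^0$-action, property (2) that the result is a Cartan connection, and regularity is inherited from that of $\bar\omega$. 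On morphisms, a pair $(\varphi,p)$ integrates $\varphi$ to a germ of Lie group homomorphism $K\to K'$, induces the corresponding bundle map of associated bundles, and is then postcomposed with right multiplication by $p$ to produce the required germ of gauge transformation.

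The two natural isomorphisms then follow by direct computation. For $FG\simeq\id_{\Cartan^a}$, the canonical basepoint $e=[1_K,1_P]\in K\times^{K^0}P$ satisfies $\lambda_e^*\omega=\bar\omega$ on the nose, by the very construction of $\omega$ in $G$. For $GF\simeq\id_{\Cartan^g}$, the chosen basepoint $e\in\sG$ determines a local $K$-equivariant bundle isomorphism from $K\times^{K^0}P$ onto a neighbourhood of $e$ in $\sG$; the defining property of the reconstructed Cartan connection together with invariance under $K$ forces this isomorphism to intertwine the two connections as germs of invariant Cartan geometries.

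The main obstacle I anticipate is the morphism bookkeeping: morphisms in $\Cartan^g$ are germs of local gauge transformations covering locally-defined maps of homogeneous spaces, whereas morphisms in $\Cartan^a$ are strictly algebraic pairs $(\varphi,p)$. Making the passage precise requires careful handling of shrinking neighbourhoods, the integration of Lie algebra homomorphisms to germs of Lie group homomorphisms, and the verification that the dependence on the choices of basepoints and integrations washes out modulo right $P$-multiplication. The substantive analytic content---that an invariant Cartan connection on $K/L$ is determined by, and can be reconstructed from, its pullback along a single orbit map, once a compatible lift $L\to P$ is specified---is essentially \cite[Prop.\ 1.5.15]{CS}, which supplies the hard part of the argument.
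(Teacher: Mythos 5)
Your proposal follows essentially the same route as the paper: $F$ is built by fixing a basepoint $e$ over the origin and pulling back along the orbit map via Lemma \ref{lem:baromega}, $G$ by invoking Lemma \ref{lem:kk0} and equipping $K\times^{K^0}P$ with the unique invariant Cartan connection whose pullback at the canonical basepoint is $\bar\omega$ (you make explicit the formula $\Ad_{p^{-1}}(\bar\omega\circ\theta_K)+\theta_P$ that the paper leaves to \cite[Prop.\ 1.5.15]{CS}), and the natural isomorphisms are the same basepoint comparisons. The only quibble is the convention $f(e)=e'\cdot p^{-1}$: with the stated compatibility $\Ad_p\circ\lambda_e^*\omega=(\lambda_{e'}^*\omega')\circ\varphi$ one should take $f(e)=e'\cdot p$ (as the paper does), but this washes out and does not affect the argument.
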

\begin{proof}
To construct a functor $F : \Cartan^g \to \Cartan^a$, let us
fix for each object $(K,L,\sG,\omega)$ a point $e \in \sG$ in the fibre
over the origin. By Lemma \ref{lem:baromega}
we then have an algebraic Cartan connection $\lambda_e^*\omega$ on $\fk$.
Defining a filtration $\fk^\bullet$ on $\fk$ by $\fk^i = (\lambda_e^*\omega)^{-1}(\fg^i)$
we set $F(K,L,\sG,\omega) = (\fk^\bullet, \lambda_e^*\omega)$. 
Given a morphismm
$(\varphi,f) : (K,L,\sG,\omega) \to (K',L',\sG',\omega')$ we let $F(\varphi,f) = (\varphi,p)$ with 
$p \in P$ such that $f(e) = e' \cdot p$ (by abuse of notation
we identify $e'$ with the unique element
in the fibre of $h^*_\varphi\sG$ over the origin mapping to $e' \in \sG'$).
Letting $\tilde h_\varphi$ be the germ at $e_K$ of a local embedding $K \to K'$
induced by $\varphi$, we then have
$$R_p^{-1} \circ f \circ \lambda_e = \lambda_{e'} \circ \tilde h_\varphi$$
as germs at $e_K$ of maps $K \to \sG'$, so that
$$\Ad_p \circ \lambda_e^*\omega = \lambda_{e'}^*\omega' \circ \varphi $$
and we have indeed defined a morphism in $\Cartan^a$.
Finally, compatibility with composition is a bit tedious, but not difficult to check.

To construct a functor $G : \Cartan^a \to \Cartan^g$, consider
for each object $(\fk,\bar\omega)$ the Lie groups $K \supset K^0$ 
as in Lemma \ref{lem:kk0}.
Set $G(\fk,\bar\omega) = (K, K^0, K\times^{K^0}P, \omega)$ where
$\omega$ is the unique invariant Cartan connection such that
$\lambda_{(e_K,e_P)}^*\omega=\bar\omega$. Furthermore, given a morphism
$(\varphi,p) : (\fk,\bar\omega) \to (\fk',\bar\omega')$ let
$G(\varphi,p) = (\varphi, R_p)$ where 
we identify $h_\varphi^*(K'\times^{K'^0}P)$ with the germ at the origin
of $K\times^{K^0}P$ via $\tilde h_\varphi$. Again, it is straightforward
to check that we have defined a functor.

There is a natural isomorphism $\id \to FG$ of endofunctors on
$\Cartan^a$, whose
component at $(\fk,\bar\omega)$ is $(\id_\fk,p)$ where $p\in P$ is
such that $(e_K,p) \in K\times^{K^0}P$ is the point used in the
construction of $F$. In the opposite direction, we have
the natural isomorphism $\id \to GF$ of endofunctors on
$\Cartan^g$, whose
component at $(K,L,\sG,\omega)$ is $(\id_\fk,f)$ where $f$ is a germ
of a gauge transformation sending the point $e \in\sG$ used
in the construction of $F$ to $(e_K,e_P) \in K\times^LP$ 
(we identify a neighbourhood of
identity in $K$ with a neighbourhood of identity in the
group used in the construction of $G$, and embed $L$ in $P$
as in Lemma \ref{lem:baromega}).
\end{proof}

\begin{lem}\label{lem:curvs}
Let $(K,L,\sG,\omega)$ be an object of $\Cartan^g$ and
$(\fk,\bar\omega)$ an object of $\Cartan^a$ such that
the two become isomorphic under the equivalence of categories
$\Cartan^g \approx \Cartan^a$ of Lemma \ref{lem:acc}. Then
the curvature function
$\kappa : \sG \to C_2(\fp_+,\fg)$ factors through the $P$-orbit
of $\bar\kappa$.
\end{lem}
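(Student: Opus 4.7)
The plan is to reduce, using the equivalence of categories of Lemma \ref{lem:acc}, to the explicit model $G(\fk,\bar\omega) = (K, K^0, \sG, \omega)$ with $\sG = K\times^{K^0}P$ and distinguished point $e = [e_K, e_P]$. Both sides of the desired inclusion are $P$-stable and transform compatibly under gauge transformations, so it suffices to verify the statement in this model. The key geometric observation for this model is that $\sG$ is a single $K\cdot P$ orbit: every point equals $k\cdot e\cdot p$ for some $k\in K$, $p\in P$, since by definition $[k,p] = k\cdot[e_K,e_P]\cdot p$.

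My first substantial step is to compute $\kappa(e)$ and show it equals $\bar\kappa$. By construction of $G$ in Lemma \ref{lem:acc}, $\omega$ is the unique invariant Cartan connection on $\sG$ such that $\lambda_e^*\omega = \bar\omega$, viewed as a left-invariant $\fg$-valued one-form on $K$. Pulling back the geometric structure equation $\Omega = d\omega + \tfrac{1}{2}[\omega\wedge\omega]_\fg$ along $\lambda_e$ and evaluating on left-invariant vector fields corresponding to $X,Y \in \fk$ at $e_K$ reproduces, by the usual Maurer--Cartan/Chevalley--Eilenberg calculation, the algebraic curvature
$$\bar\Omega(X,Y) = d^\fk\bar\omega(X,Y) + \tfrac{1}{2}[\bar\omega X, \bar\omega Y]_\fg.$$
Combining the defining identities $\Omega_e(\cdot,\cdot) = \kappa(e)(\omega_e\cdot, \omega_e\cdot)$ and $\bar\Omega = \bar\kappa\circ(\bar\omega\wedge\bar\omega)$ with the fact that $\bar\omega$ surjects onto $\fg/\fp$, I conclude $\kappa(e)=\bar\kappa$ as elements of $\Lambda^2(\fg/\fp)^*\otimes\fg \simeq \Lambda^2\fp_+\otimes\fg$.

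My second step is to combine this pointwise identity with the transformation laws for the curvature function. The $P$-principal structure gives the standard equivariance $\kappa(u\cdot p) = p^{-1}\cdot\kappa(u)$ for the $\Ad$-induced action of $P$ on $\Lambda^2\fp_+\otimes\fg$. The invariance $L_k^*\omega = \omega$ forces $L_k^*\Omega = \Omega$, whence $\kappa(k\cdot u) = \kappa(u)$ for all $k\in K$. Using homogeneity to write any $u \in \sG$ as $u = k\cdot e\cdot p$, we get
$$\kappa(u) = p^{-1}\cdot \kappa(k\cdot e) = p^{-1}\cdot\kappa(e) = p^{-1}\cdot\bar\kappa,$$
which lies in the $P$-orbit of $\bar\kappa$, as required.

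The only mildly delicate point in the argument is the identification $\kappa(e)=\bar\kappa$: one has to check that the pullback of the geometric structure equation via $\lambda_e$ exactly matches the algebraic curvature of $\bar\omega$, and that both $\kappa(e)$ and $\bar\kappa$ are unambiguously viewed as elements of the same space $\Lambda^2\fp_+\otimes\fg$. Once this bookkeeping is handled, the rest is formal: invariance along $K$-orbits, equivariance along $P$-orbits, and the homogeneity $\sG = K\cdot e\cdot P$.
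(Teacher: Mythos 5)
Your argument is correct and follows essentially the same route as the paper: the key identity is that pulling back the structure equation along the orbit map $\lambda_e$ yields $\bar\Omega=\lambda_e^*\Omega$ and hence $\kappa(e)=\bar\kappa$, after which $K$-invariance, $P$-equivariance and the homogeneity $\sG=K\cdot e\cdot P$ give the factorisation through the $P$-orbit. The only cosmetic difference is that you transfer the statement to the explicit model $G(\fk,\bar\omega)$ via the germ of a gauge transformation, whereas the paper invokes stability of the conclusion under isomorphisms in $\Cartan^a$; these amount to the same bookkeeping.
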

\begin{proof}
This follows from the identity 
$\lambda^*_e d\omega = d^\fk \lambda^*_e\omega$
where we view $\lambda^*_e$ as pulling back
$K$-invariant $\fg$-valued one-forms on $\sG$ to elements of
$\Lambda^\bullet \fk^* \otimes \fg$. Indeed, letting
$\bar\omega = \lambda^*_e\omega$ we then have $\bar\Omega = \lambda^*_e\Omega$
where $\Omega = d\omega + \frac{1}{2}[\omega\wedge\omega]$ and 
$\bar\kappa = \lambda^*_e \kappa$ where $\kappa$ is the $K$-invariant curvature function
on $\sG$. In general it follows that $\kappa$ factors through the $P$-orbit of $\bar\kappa$,
and furthermore the latter statement is stable under isomorphisms in $\Cartan^a$.
\end{proof}

By Lemma \ref{lem:curvs}, the equivalence of Lemma \ref{lem:acc}
restricts to an equivalence between $\Cartan^a_n$
and $\Cartan^g_n$, the full subcategory of $\Cartan^g$ consisting
of those $(K,L,\sG,\omega)$ for which $\omega$ is normal.

\begin{proof}[Proof of Proposition \ref{pro:acc}]
We need to construct an essential inverse
to the obvious forgetful functor $\Cartan^a_n \to \Def$.
This is equivalent to providing
an essential inverse to the functor $A : \Cartan^g_n \to \Def$,
a composite of the above with $F : \Cartan^g_n \to \Cartan^a_n$
as in the proof of Lemma \ref{lem:acc}. Recall that we
choose for each $(K,L,\sG,\omega)$ a point $e \in \sG$ over the origin;
then $A(K,L,\sG,\omega) = (\fk, \gr\lambda_e^*\omega)$ where
the filtration on $\fk$ is induced by $\lambda_e^*\omega$. On morphisms
we have $A(\varphi,f) = (\varphi, g)$ where $g$ is the class in $G_0$
of the element $p \in P$ such that $f(e)=e'p$. 

To define a functor
in the opposite direction, choose for each $(\fk,\iota)$ in $\Def$
Lie groups $K^0 \subset K$ with Lie algebras
$\fk^0\subset\fk$ such that $K$ acts faithfully on $K/K^0$, and
let $\sD_\fk$ be the invariant distribution on $K$ corresponding to $\fk^{-1}/\fk^0$.
Use Proposition \ref{pro:equiv-para} to construct a regular normal
Cartan geometry $(\sG_\fk, \omega_\fk)$ over $K/L$ for the underlying datum of $\sD_\fk$.
Set $B(\fk,\iota)=(K,K^0,\sG_\fk,\omega_\fk)$.
Again by Proposition \ref{pro:equiv-para}, 
given a morphism $(\varphi,g) : (\fk,\iota) \to (\fk',\iota')$ in $\Def$,
there is a unique germ of a gauge transformation $f : (\sG_\fk,\omega_\fk) 
\to h_\varphi^*(\sG_{\fk'},\omega_{\fk'})$.
Set $B(\varphi,g)=(\varphi,f)$. We have thus defined a functor $B : \Def \to \Cartan^g_n$.

Now, $AB \simeq \id$ as endofunctors on $\Cartan^g_n$ by Proposition \ref{pro:equiv-para}
(uniqueness of a regular normal parabolic geometry up to unique gauge). On the other hand
$BA \simeq \id$ as endofunctors on $\Def$ since any two graded Lie algebra monomorphisms
$\gr\fk \rightrightarrows \fg$ whose image contains $\fg_-$ differ by a unique element of $G_0$.
\end{proof}

\subsection{Invariants and symmetries}

Observe that Proposition \ref{pro:acc} implies
a slightly stronger statement: for every object $(\fk, \iota)$ of $\Def$, we may
lift $\iota : \gr\fk \to \fg$ to a regular normal algebraic Cartan connection
$\bar\omega : \fk \to \fg$, and furthermore that latter lift is unique up to
the adjoint action of $P_+$ on $\fg$. Now, by Lemma \ref{lem:curvs} the
curvature function $\bar\kappa \in Z_2(\fp_+,\fg)$
coincides with the curvature function of
the parabolic geometry associated with the germ of a distribution described
by $\fk$, and in particular its homology class $\bar\kappa_H \in H_2(\fp_+,\fg)$
\emph{is} the harmonic curvature. More precisely, given $(\fk,\iota)$
the curvature $\bar\kappa$ is defined uniquely up to $P_+$-conjugacy,
and the harmonic curvature $\bar\kappa_H$ is an honest invariant.
\begin{lem}\label{lem:curv-fun}
The assignment sending an object $(\fk,\iota)$ of $\Def$
to the harmonic curvature of a regular normal algebraic Cartan connection
lifting $\iota$ extends to a functor
$ \Def \to H_2(\fp_+,\fg)^1 \sslash G_0$.
\end{lem}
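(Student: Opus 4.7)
The plan is to use Proposition~\ref{pro:acc} to lift the data from $\Def$ to $\Cartan^a_n$, compute the curvature there, and then track how it transforms under morphisms when passed to homology.

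On objects: for each $(\fk,\iota)$ in $\Def$, choose by Proposition~\ref{pro:acc} a regular normal algebraic Cartan connection $\bar\omega:\fk\to\fg$ with $\gr\bar\omega=\iota$. Its curvature $\bar\kappa\in Z_2(\fp_+,\fg)^1$ (regularity gives the filtration index $1$; normality places it in cycles) is well-defined by $\bar\omega$, and hence by $(\fk,\iota)$ up to the freedom in the choice of lift. Two such lifts differ by the action of an element of $P_+$, so one needs the fact that the natural $\fp$-action on $H_2(\fp_+,\fg)^1$ is trivial on $\fp_+$. This follows because the filtration used in defining $\cdot^1$ is $\fp$-invariant and $\fp_+$ shifts it strictly upward, hence acts by zero on the associated graded, which is identified with $H_2(\fp_+,\fg)$ (the Kostant/Hodge decomposition already invoked in Subsection~\ref{ss:para}). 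Thus $\bar\kappa_H\in H_2(\fp_+,\fg)^1$ depends only on $(\fk,\iota)$, and $P$ acts on it through $P/P_+\simeq G_0$.

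On morphisms: given $(\varphi,g):(\fk,\iota)\to(\fk',\iota')$ in $\Def$, Proposition~\ref{pro:acc} lifts it (uniquely, by construction, up to the $P_+$-freedom in the lifts of objects) to a morphism $(\varphi,p)$ in $\Cartan^a_n$, so that $\Ad_p\circ\bar\omega=\bar\omega'\circ\varphi$ and $[p]=g$ in $P/P_+\simeq G_0$. Using the Maurer--Cartan type definition $\bar\Omega(X,Y)=[\bar\omega X,\bar\omega Y]-\bar\omega([X,Y])$ together with $\Ad_p\circ\bar\omega=\bar\omega'\circ\varphi$ and the fact that $\varphi$ is a Lie algebra homomorphism, a routine computation yields $\Ad_p\circ\bar\Omega=\bar\Omega'\circ(\varphi\wedge\varphi)$ on $\Lambda^2\fk$. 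Reducing mod $\fp$ in each slot, this says exactly that $\bar\kappa$ and $\bar\kappa'$ are related by $\Ad_p$ under the natural $P$-action on $C_2(\fp_+,\fg)\simeq\Lambda^2(\fg/\fp)^*\otimes\fg$. Passing to homology, where $P_+$ acts trivially, we obtain $g\cdot\bar\kappa_H=\bar\kappa'_H$, so it makes sense to send $(\varphi,g)$ to $g$ regarded as a morphism $\bar\kappa_H\to\bar\kappa'_H$ in the action groupoid $H_2(\fp_+,\fg)^1\sslash G_0$.

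Functoriality is immediate: as follows from Lemma~\ref{lem:g-from-phi} (specifically its proof, where $g$ is determined by $\gr\varphi$ through $\iota,\iota'$), composition in $\Def$ yields $(\varphi_2\varphi_1,g_2g_1)$, matching composition in the action groupoid. The main obstacle to be careful about is the first step: one must verify that the ambiguity in the lift $\bar\omega$ is precisely by $P_+$ (so that $\bar\kappa_H$ is well-defined) and that the $P_+$-action on the filtered piece $H_2(\fp_+,\fg)^1$ is trivial. Everything else is a direct computation, and the structural work was already carried out in the proofs of Lemma~\ref{lem:curvs} and Proposition~\ref{pro:acc}.
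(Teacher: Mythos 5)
Your proposal is correct and follows essentially the same route as the paper: lift via Proposition~\ref{pro:acc} (with the lift unique up to the adjoint action of $P_+$), note that $P_+$ acts trivially on the completely reducible $\fp$-module $H_2(\fp_+,\fg)^1$ so that $\bar\kappa_H$ is well defined, and send the morphism $(\varphi,g)$ to $g$ in the action groupoid. The explicit equivariance computation $\Ad_p\circ\bar\Omega=\bar\Omega'\circ(\varphi\wedge\varphi)$ is left implicit in the paper but is exactly the check underlying its one-line proof.
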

\begin{proof}
The functor in question sends 
a morphism $(\varphi, g) : (\fk,\iota) \to (\fk,\iota')$
to $g \in G_0$.
\end{proof}

We will thus be able to compute the basic invariant $\bar\kappa_H$
of $(\fk,\iota)$ by computing a regular, normal algebraic Cartan
connection lifting $\iota$, and then its harmonic curvature. We shall
not go into the technical details of this calculation in this paper.
As we have stated in the introductory subsection, the main reason for
using the parabolic geometric description is an explicit algorithm computing 
the infinitesimal symmetries. Recall from \cite[Lemma 1.5.12]{CS} that the
infinitesimal symmetries of a Cartan geometry $(\sG \to M, \omega)$
of type $(\fg,P)$, viewed as vector fields on $\sG$ whose flow preserves
$\omega$, are in one-to-one correspondence with sections of the associated
bundle $\sG \times^P \fg$ parallel for the \emph{modified tractor connection}. 
Identifying $\Gamma(M, \sG\times^P\fg)$ with $C^\infty(\sG,\fg)^P$, we
say that a $P$-equivariant function $x : \sG \to \fg$ is parallel if
$$ dx + \omega \cdot x + \tilde\kappa\cdot x = 0 $$
where $\tilde\kappa \in \Omega^1_\sG \otimes \End\fg$ is given by
$\tilde\kappa(\xi)\cdot X =\kappa(\omega(\xi),X)$ for $\xi \in T\sG$ and $X \in \fg$.
Translating this prescription to the algebraic setting, we have the following.
\begin{lem}\label{lem:sym}
Let $(\fk,\bar\omega)$ be an object of $\Cartan^a_n$.
Define
$$ \alpha : \fk \to \End\fg,\quad \alpha(X)Y = [\bar\omega X,Y] + \bar\kappa(\bar\omega X,
Y)\quad\textrm{for all}\ X\in\fk, Y\in\fg $$
and let
$$ R = d^\fk \alpha + \frac{1}{2}[\alpha\wedge\alpha]_{\End\fg} $$
where $d^\fk$ is the differential in $C^\bullet(\u\fk) \otimes \End\fg$
(trivial action on coefficients) and $[,]_{\End\fg}$ the bracket in $\End\fg$.
Let $\fs \subset \fg$ be the subspace annihilated by endomorphisms of the form
$$ \alpha(X_1)\cdots \alpha(X_{r-2}) R(X_{r-1},X_r),\qquad r\ge 0,\quad X_1,\dots, X_r \in \fk.
$$
Then $\dim\fs= \dim \sym\sD$
where $\sD$ is a germ of a Monge distribution corresponding to $(\fk,\bar\omega)$
under the composite forgetful functor $\Cartan^a_n \to \Germ$. 
\end{lem}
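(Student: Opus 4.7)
The plan is to translate $(\fk,\bar\omega)$ to the geometric picture via Lemma \ref{lem:acc}, invoke the tractor-theoretic algorithm for symmetries from \cite[Lemma 1.5.12]{CS} discussed just above the statement, and then pull that algorithm back to a purely algebraic statement on $\fk$.

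By Lemma \ref{lem:acc} we lift $(\fk,\bar\omega)$ to an invariant regular normal Cartan geometry $(\sG\to K/K^0,\omega)$, with a distinguished point $e \in \sG$ over the origin satisfying $\lambda_e^*\omega = \bar\omega$. By Propositions \ref{pro:acc} and \ref{pro:equiv-para}, the composite forgetful functor $\Cartan^a_n \to \Germ$ sends $(\fk,\bar\omega)$ to the germ of the Monge distribution $\sD$ underlying this Cartan geometry. By the cited result, $\sym\sD$ is in bijection with the space of $P$-equivariant functions $x:\sG\to\fg$ that are parallel for the modified tractor connection $\nabla = d+\omega+\tilde\kappa$, so $\dim\sym\sD$ equals the dimension of that space.

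Pulling back along $\lambda_e:K\to\sG$, a parallel section $x$ corresponds to $\bar x = \lambda_e^*x : K \to \fg$ satisfying $d^\fk\bar x + \alpha\cdot \bar x = 0$, where $\alpha$ is exactly the operator from the statement: its two summands $\ad_{\bar\omega Y}$ and $\bar\kappa(\bar\omega Y,\cdot)$ come from $\omega$ and $\tilde\kappa$ respectively. The evaluation map $\bar x\mapsto \bar x(e_K)$ embeds the space of parallel sections into $\fg$, and its image is the set of initial values $x_0\in\fg$ admitting a global extension solving the above overdetermined linear system on $K$.

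It remains to identify this image with $\fs$. Since $\alpha$ and $R = d^\fk\alpha + \frac{1}{2}[\alpha\wedge\alpha]$ are constant tensors on $K$ (they live in $\fk^*\otimes\End\fg$ and $\Lambda^2\fk^*\otimes\End\fg$ respectively), a standard holonomy/Frobenius argument applies: $x_0$ extends iff it lies in the common kernel of $R$ and of all its iterated covariant derivatives $\nabla^{r-2}R$ at $e_K$, evaluated as operators on $\fg$. Because $R$ is constant, each $\nabla_Y$ reduces to bracketing with $\alpha(Y)$ in $\End\fg$; expanding, one finds that the $r$-th order obstruction on $x_0$ is precisely the vanishing of $\alpha(X_1)\cdots\alpha(X_{r-2})R(X_{r-1},X_r)x_0$ for all $X_i\in\fk$. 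Taking these for all $r\ge 2$ yields $\fs$, hence $\dim\fs = \dim\sym\sD$. The main obstacle is the last step: the combinatorial unpacking of iterated tractor covariant derivatives into the precise polynomial expressions in $\alpha$ and $R$ in the statement. The conceptual simplification is that on $K$ both $\alpha$ and $R$ are constant, so covariant differentiation introduces exactly one factor of $\alpha$ per step, and the resulting obstructions are parametrised cleanly by words in the $\alpha(X_i)$ composed with $R$.
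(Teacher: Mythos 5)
Your proposal is correct and follows essentially the same route as the paper's own (sketched) argument: pass to the invariant Cartan geometry via Lemma \ref{lem:acc}, use the modified tractor connection characterisation of symmetries from \cite[Lemma 1.5.12]{CS}, pull back by $\lambda_e$ so that $\alpha$ and $R$ become the connection form and curvature, and identify the image of the evaluation map with the kernel of the formal/infinitesimal holonomy, which is exactly $\fs$. The only difference is cosmetic: the paper phrases the last step in terms of infinite jets and the ``formal holonomy group'', while you phrase it as a holonomy/Frobenius argument using constancy of $\alpha$ and $R$ on $K$; both are at the same level of detail as the paper's sketch.
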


For the sake of brevity, the following is a sketch of an argument.
I owe the idea of using formal holonomy to Ian Anderson. 
\begin{proof}
Consider a regular normal invariant Cartan geometry
$(K,K^0,\sG,\omega)$ corresponding to $(\fk,\bar\omega)$ under the equivalence
$\Cartan^a_n \approx \Cartan^g_n$, together with a point $e \in \sG$
over the origin such that $\bar\omega = \lambda_e^*\omega$. Then
$\alpha$ and $R$ correspond precisely to the `modified tractor connection' on
$\sG\times^P\fg$ and its curvature, pulled back by $\lambda_e$. 
Since every parallel section,
represented by a map $x:\sG \to \fg$, is determined
by its value $x(e) \in \fg$, it is enough to find the image in $\fg$
of the space of germs of parallel sections under the evaluation map at $e$. 
Since the latter map is an injection into a finite-dimensional space,
we may in fact consider infinite
jets, rather than germs, of parallel sections. Then the desired
subspace of $\fg$ is the kernel
of the `formal holonomy group', i.e. the subspace $\fs \subset \fg$
defined in the Lemma. 
\end{proof}

\section{Classification algorithm}

\subsection{Introduction}
We have established a sufficient foundation for setting up
our classification algorithm. Recall that we have passed through
the following chain of categories
$$
 \Germ \rightleftarrows \GermSym \approx \Def \approx 
\Cartan^a_n,\qquad
\Def^*_{\u\fk} \approx M_{\u\fk}\sslash G_0^{\u\fk} 
$$
where the latter holds if $H^0(\fL(\u\fk))=0$.
In addition, given an object $(\fk,\bar\omega)$ of $\Cartan^a_n$ we
may compute the harmonic curvature $\bar\kappa_H$
and the dimension of the symmetry algebra of the
corresponding distribution in terms of $\bar\omega$ (see Lemmas \ref{lem:curv-fun}
and \ref{lem:sym}).

The algorithm will pass through
equivalence classes of graded subalgebras in $\fg$.
Since we are interested
in \emph{non-flat} $2$-transitive models with vanishing scalar
component of harmonic curvature, we may from the outset restrict
to conjugacy classes represented by $\u\fk$ such that $\dim\u\fk_0 \ge 2$
and $\u\fk_0$ annihilates a non-zero element of the quintic component
of $H_2(\fp_+,\fg)$. Indeed, by Lemma \ref{lem:curv-inv}, the harmonic
curvature $\kappa_H$ of any $(\fk,\iota)$ in $\Def_{\u\fk}$
is invariant under $\bar\omega(\fk^0) \subset \fp$,
and thus under $\gr\bar\omega(\fk_0) = \u\fk_0 \subset \fg_0$, where
$\bar\omega$ is some regular, normal algebraic Cartan connection
lifting $\iota$. These graded subalgebras turn out to be
very simple:
\begin{lem}\label{lem:prol}
Suppose $\u\fk \subset \fg$ is a graded subalgebra
containing $\fg_-$ and such that $\u\fk_0$ annihilates a nonzero element of the quintic
component of $H_2(\fp_+,\fg)^1$. Then $\u\fk_i = 0$ for all $i>0$.
\end{lem}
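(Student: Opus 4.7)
The plan is to reduce the statement to showing $\u\fk_1=0$. Since $\fg$ is the Tanaka prolongation of $\fg_-$, the adjoint map $\fg_i\to\Hom(\fg_{-1},\fg_{i-1})$ is injective for every $i>0$; thus, once $\u\fk_1=0$ is established, an induction on $i$ using $[X,\fg_{-1}]\subset\u\fk_{i-1}=0$ for $X\in\u\fk_i$ yields $\u\fk_i=0$ for all $i\geq 2$.

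Decompose $\fg_1=\fx^*\oplus\fa^*$ with $\fx^*=\fg_{\alpha_3}$ and $\fa^*=\fg_{\alpha_2}\oplus\fg_{\alpha_1+\alpha_2}$, and split $X\in\u\fk_1$ as $X=X_\fx+X_\fa$. The only non-trivial bracket of $X_\fx$ with $\fg_{-1}$ lies in $\RR h_{\alpha_3}$, so $X_\fx\neq 0$ would require $h_{\alpha_3}\in\u\fk_0$. A direct weight computation using $\RR[-3,1]\simeq\fx^{-1}\otimes(\det\fa)^2$ shows that $h_{\alpha_3}$ acts on $S^5\fa^*$ by $-5$, on $\fx^{-1}$ by $+2$, and on $(\det\fa)^2$ by $+4$; so it acts on $S^5\fa^*\otimes\RR[-3,1]$ as the scalar $+1$ and cannot annihilate any nonzero $\nu$. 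Hence $X_\fx=0$.

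It remains to rule out $X_\fa\neq 0$. Work in a weight basis $\phi_+,\phi_-\in\fa^*$ dual to a weight basis $v_\pm\in\fa$ for $h_{\alpha_1}$, and write $X_\fa=\alpha\phi_++\beta\phi_-$. Compute $[X_\fa,\fa]\subset\fg_0$ explicitly via the canonical embedding $\fa^*\otimes\fa\hookrightarrow\fg_0=\End\fx\oplus\End\fa$, whose image is a $4$-dimensional hyperplane linking the $\End\fx$-component to the trace on $\End\fa$. The stabiliser $\fg_0^\nu$ is contained in $\fh\oplus\mathfrak{b}_{[v]}$, where $\mathfrak{b}_{[v]}\subset\fsl(\fa)$ is the line-stabiliser of $[v]\in\PP S^5\fa^*$; standard $\fsl_2$ representation theory on the irreducible $S^5$ shows that $\mathfrak{b}_{[v]}$ never contains both $X_{\alpha_1}$ and $X_{-\alpha_1}$, and has dimension $2$ exactly in the type-$N$ case ($v$ a fifth power). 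Assuming without loss of generality $X_{-\alpha_1}\notin\mathfrak{b}_{[v]}$, the $X_{-\alpha_1}$-component of $[X_\fa,v_-]$ equals $\alpha$ and must vanish.

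In the non-type-$N$ cases one then also has $X_{\alpha_1}\notin\mathfrak{b}_{[v]}$, so $[X_\fa,v_+]=\beta X_{\alpha_1}\in\fg_0^\nu$ immediately gives $\beta=0$. The main obstacle is the type-$N$ case, where $X_{\alpha_1}\in\mathfrak{b}_{[v]}$ and the preceding dimensional arguments no longer suffice. Here one closes by an explicit comparison of $\End\fx$-components: for $v=\phi_-^5$, the bracket embedding forces the $\End\fx$-component of $[X_\fa,v_-]$ to be $-\beta/3$, whereas membership in $\fg_0^\nu$ forces it to be $-3\beta$; consistency gives $\beta=0$, hence $X_\fa=0$ and $\u\fk_1=0$.
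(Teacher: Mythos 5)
Your overall route is genuinely different from the paper's: the paper proves, for every $i>0$ at once, that the $G_0$-equivariant map $\fg_i\otimes W\to\Hom(\fg_{-i},W)$ is injective (multiplicity-one decomposition plus a check on weight vectors), whereas you reduce to $\u\fk_1=0$ via the Tanaka prolongation property and then argue by hand inside $\fg_1=\fx^*\oplus\fa^*$. The reduction to degree one is correct, and so is the elimination of $X_\fx$: the bracket with $\fg_{-1}$ forces $h_{\alpha_3}\in\u\fk_0$, and $h_{\alpha_3}$ acts on the quintic component by a nonzero scalar (the precise value depends on the normalisation of the line factor, but nonzero is all you use). The problem is in the $X_\fa$ step. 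From $[X_\fa,v_-]\in\u\fk_0\subset\fg_0^\nu$ you know only that the $\fsl(\fa)$-part of this element lies in $\mathfrak{b}_{[v]}$; you then conclude that its $X_{-\alpha_1}$-component vanishes because $X_{-\alpha_1}\notin\mathfrak{b}_{[v]}$. That inference is invalid: $\mathfrak{b}_{[v]}$ need not be spanned by weight vectors of your chosen basis. For example, if $v$ is an eigenvector (up to scalar) of the conjugate torus generated by $h_{\alpha_1}+X_{-\alpha_1}$, then $\mathfrak{b}_{[v]}=\RR(h_{\alpha_1}+X_{-\alpha_1})$, so $X_{-\alpha_1}\notin\mathfrak{b}_{[v]}$ and yet elements of $\fh+\mathfrak{b}_{[v]}$ have nonzero $X_{-\alpha_1}$-components; nothing in your argument excludes this configuration. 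The same issue recurs, more visibly, in the type-$N$ case: the hypothesis only gives $v=\ell^5$ for some $\ell\in\fa^*$, and your stated ``without loss of generality'' ($X_{-\alpha_1}\notin\mathfrak{b}_{[v]}$) merely excludes $[\ell]=[\phi_+]$; it does not put you in the position $v=\phi_-^5$ on which your final component comparison is based.

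The missing ingredient is a normalisation step. The hypothesis and conclusion are invariant under $\Ad_{G_0}$ (replace $\u\fk$ by $\Ad_g\u\fk$ and $\nu$ by $g\cdot\nu$), and every real line $[v]\subset\PP S^5\fa^*$ can be moved by $\SL(\fa)$ so that $\mathfrak{b}_{[v]}$ is one of $0$, $\RR h_{\alpha_1}$, or $\langle h_{\alpha_1},X_{\alpha_1}\rangle$, the last occurring exactly when $v$ is a multiple of $\phi_-^5$ (a one-dimensional stabiliser spanned by a nilpotent would force the two-dimensional Borel, and an elliptic stabiliser is impossible since the rotation generator has no real eigenvalue on $S^5\RR^2$). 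Only after this normalisation is $\mathfrak{b}_{[v]}$ adapted to your weight basis, and then both the vanishing of the $X_{-\alpha_1}$-component of $[X_\fa,v_-]$ and the explicit type-$N$ computation with $v=\phi_-^5$ go through. (In the non-type-$N$ cases one can instead avoid normalising by using $[X_\fa,v_-]$ and $[X_\fa,v_+]$ simultaneously, since $\mathfrak{b}_{[v]}$ is then at most one-dimensional and contains no nonzero nilpotent; but the type-$N$ case genuinely needs the conjugation.) With that step inserted, your argument is a correct, more elementary alternative to the paper's uniform equivariance argument; as written, the key vanishing claims are not justified.
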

\begin{proof}
Let $W \subset H_2(\fp_+,\fg)^1$ denote the quintic component;
it decomposes into one-dimensional weight subspaces with
weights $\lambda_j=\alpha_2 + (j-2)\alpha_1$, $0 \le j \le 5$. We want
to show, for each $i>0$, that the map
$$ \fg_i \otimes W \to \fg_i \otimes W, \quad
X \otimes \upsilon \mapsto [X,-]\cdot\upsilon $$
is an isomorphism (where we identify $\fg_i \simeq \fg_{-i}^*$
via the Killing form). Now, the above is $G_0$-equivariant
and one easily checks that the multiplicity of each
irreducible component of $\fg_i\otimes W$ is $1$, whence
the map is a scalar on every irreducible component. In particular,
it is enough to check that $[X_\alpha, -]\cdot \upsilon_j \neq0$
for every weight vector $\upsilon_j \in W_{\lambda_j}$ and
root vector $X_\alpha \in \fg_\alpha$, $\Ht \alpha=i$. 
Using the Cartan matrix of $\fg$ one computes
$$ 
[X_\alpha, X_{-\alpha}]\cdot \upsilon_j = \langle \lambda_j,\alpha^\vee\rangle \upsilon_j\neq 0
$$
whenever either $j \in \{0,5\}$ or $\alpha=\alpha_3$. In the remaining cases,
we  have $$
[X_{\alpha}, X_{-\alpha \pm \alpha_1}] \cdot \upsilon_j 
\sim X_{\pm\alpha_1} \cdot \upsilon_j
= \upsilon_{j\pm 1} \neq 0 $$
whenever $-\alpha\pm\alpha_1$ is a root (which holds at least for one choice of sign).
\end{proof}

In particular, we find that all graded subalgebras $\u\fk$
we need to consider do satisfy the cohomology vanishing property, and
thus their filtered deformations admit universal global Kuranishi families.
\begin{cor}\label{cor:prol}
Suppose $\u\fk \subset \fg$ is a graded subalgebra
containing $\fg_-$ and such that $\u\fk_0$ annihilates a nonzero element of the quintic
component of $H_2(\fp_+,\fg)^1$. Then $H^{1,1}(\u\fk,\u\fk)=H^0(\fL(\u\fk))=0$.
\end{cor}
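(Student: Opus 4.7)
I will show the stronger statement $H^{1,j}(\u\fk,\u\fk) = 0$ for every $j > 0$; both stated vanishings then follow. Indeed $\fL^{-1}(\u\fk) = \u\fk_{>0}$, which is zero by Lemma \ref{lem:prol}, so there are no positive-grading inner derivations; thus $H^0(\fL(\u\fk)) = \Der(\u\fk)_{>0} = \bigoplus_{j>0} H^{1,j}(\u\fk,\u\fk)$, with $H^{1,1}$ one of the direct summands.

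Fix a derivation $\varphi \colon \u\fk \to \u\fk$ of grading degree $j > 0$. Its restriction to $\fg_- \subset \u\fk$ is a degree-$j$ graded derivation of $\fg_-$ with values in $\u\fk \subset \fg$, so by the universal property of Tanaka prolongation (which gives $\Pr(\fg_-) = \fg$ in our setting) it is of the form $\ad_X|_{\fg_-}$ for a unique $X \in \fg_j$. Since $\u\fk_j = 0$ by Lemma \ref{lem:prol}, we also have $\varphi|_{\u\fk_0} = 0$, so $\varphi$ is completely determined by $X$. The landing condition $\ad_X(\fg_{-i}) \subset \u\fk_{j-i}$ then reads $[X,\fg_{-i}] = 0$ for $0 < i < j$ together with $[X,\fg_{-j}] \subset \u\fk_0$.

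For $j \geq 2$ this already gives $[X,\fg_{-1}] = 0$, whence $X = 0$ by the injectivity of $\ad \colon \fg_j \to \Hom(\fg_{-1},\fg_{j-1})$ (a consequence of $\fg_-$ being generated by $\fg_{-1}$). For $j = 1$, applying the derivation identity to $[Y,Z]$ with $Y \in \u\fk_0$, $Z \in \fg_{-1}$ and using Jacobi gives $[[X,Y],Z] = 0$ for all $Z \in \fg_{-1}$; the same injectivity applied to $[X,Y] \in \fg_1$ then yields $[X,\u\fk_0] = 0$. Together with $[X,\fg_{-1}] \subset \u\fk_0$ this shows that $\u\fk' := \u\fk + \RR \cdot X \subset \fg$ is a graded subalgebra containing $\fg_-$, with $\u\fk'_0 = \u\fk_0$ (since $[X,\fg_{-1}]$ already lies in $\u\fk_0$, adjoining $X$ does not enlarge the degree-zero part). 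Hence $\u\fk'_0$ still annihilates the same nonzero quintic element, and Lemma \ref{lem:prol} applied to $\u\fk'$ gives $\u\fk'_1 = 0$, whence $X = 0$. The main obstacle is this $j = 1$ case, where Tanaka injectivity alone is not enough and the contradiction must be produced by reducing back to Lemma \ref{lem:prol} via the auxiliary algebra $\u\fk'$.
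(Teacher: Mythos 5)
Correct, and essentially the paper's own argument: both proofs use Lemma \ref{lem:prol} to reduce to $\u\fk=\fg_-\oplus\u\fk_0$, realise the positive-degree derivations of $\u\fk$ inside $\fg_{>0}$ via the universal property (and nondegeneracy) of the Tanaka prolongation, and then invoke Lemma \ref{lem:prol} a second time to kill them. The paper packages this in one step by embedding $\u\fk\oplus H^{1,1}(\u\fk,\u\fk)$ into $\fg$ as a graded subalgebra, whereas you argue degree by degree (dispatching $j\ge 2$ by nondegeneracy alone and adjoining the single element $X$ for $j=1$), which changes nothing essential.
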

\begin{proof}
First, by Lemma \ref{lem:prol}, we have $\u\fk = \fg_- \oplus \u\fk_0$.
Now, recall that $H^{1,1}(\u\fk,\u\fk)$ is the space of 
positive degree derivations of $\u\fk$. Note that
since $\u\fk_0$ embeds into $\End \fg_{-1}$,
every $\delta \in H^1_i(\u\fk,\u\fk)$, $i>0$
is determined by the induced map
$\bigotimes^{i+1} \fg_{-1} \to \fg_{-1}$ sending $X_0\otimes\cdots\otimes X_i$
to $[X_i,[\cdots[X_1,\delta X_0]]]$.
Thus, by the universal property of Tanaka prolongation,
we have an embedding
$\u\fk \oplus H^{1,1}(\u\fk,\u\fk) \to \fg$
as a graded Lie subalgebra.
Using Lemma \ref{lem:prol}
again, we find that the latter subalgebra is contained in non-positive degrees,
whence $H^{1,1}(\u\fk,\u\fk)=0$.
\end{proof}

The representing
map $\xi_{\u\fk} : M_{\u\fk} \to \MC \fL(\u\fk)$ 
defines for each point $m \in M_{\u\fk}$ a filtered deformation
$\fk_m$ of $\u\fk$ together with a monomorphism $\iota_m : \gr\fk_m \to \fg$, 
and we may furthermore choose a lift of $\iota_m$ to a regular, normal
algebraic Cartan connection $\bar\omega_m : \fk_m \to \fg$. This allows
us to compute the harmonic curvature $\bar\kappa_m$ and the symmetry
dimension $d(m)$ computed as in Lemma \ref{lem:sym}. Points $m$
at which $d(m) > \dim\u\fk$ are rejected.
In practice, these computations are performed globally over $M_{\u\fk}$:
viewing the latter as an algebraic subvariety in $H^{2,1}(\u\fk,\u\fk)$,
the family $\{\fk_m\}$ and the data of $\bar\omega_m$ are seen as
a filtered Lie algebra and an algebraic Cartan connection over the
ring $\RR[M_{\u\fk}]$. 

It is a non-trivial observation that the method
computing a regular normal algebraic Cartan connection may be
carried out over a ring and gives rise to a well-defined connection
upon reduction to the residue field at each point. The situation
with the symmetry dimension $d(m)$ is different, as $d : M_{\u\fk} \to \ZZ$
is Zariski upper-semicontinuous. In practice, one computes the
symmetry dimension at the generic point of each irreducible component of $M_{\u\fk}\otimes_\RR\CC$,
applying the prescription of Lemma \ref{lem:sym} in terms of linear algebra over
the field of rational functions.
If it is strictly greater than $\dim\u\fk$, the
component may be rejected (by upper semi-continuity). Otherwise, one would proceed
to find a proper Zariski-closed subset at which $d$ jumps, decompose it into irreducible
components, compute $d$ at generic points etc. In the end, we shall not go that far
in the present article: the families we describe might contain finitely
many special points corresponding to models that have already been included in 
another family. 
Here by $M_{\u\fk}\otimes_\RR\CC$
we mean the subvariety of $\CC \otimes_\RR H^1(\fL(\u\fk))$
cut out by the quadratic condition
$[\Phi^{-1}x,\Phi^{-1}x]\in B^2(\fL(\u\fk))$ as in the proof of
Proposition \ref{pro:kura}; in particular, the algebraic set $M_{\u\fk} \subset H^1(\fL(\u\fk))$
is its set of real points. 
Interestingly, the irreducible components
of $M_{\u\fk} \otimes_\RR \CC$ turn out to be defined over $\RR$, and in fact rational.

\subsection{Ingredients and recipe}
We review the computational tasks forming the basic ingredients of our method:
\begin{enumerate}
\item to parameterise $G_0$-conjugacy classes of subalgebras $\u\fk_0 \subset \fg_0$
annihilating an element in the quintic component of $H_2(\fp_+,\fg)^1$;
\item given $\u\fk = \fg_- \oplus \u\fk_0$, to compute $H^{2,1}(\u\fk,\u\fk)$, a
Kuranishi family $(M_{\u\fk}, \xi_{\u\fk})$ and the quotient $M_{\u\fk} / G_0^{\u\fk}$.
\item given $(\fk,\iota)$, to compute a regular, normal algebraic Cartan connection
$\bar\omega : \fk \to \fg$ lifting $\iota$;
\item given $(\fk,\bar\omega)$, to compute the harmonic curvature $\bar\kappa_m$ and
symmetry dimension $d(m)$.
\end{enumerate}
As we have remarked, whenever constructions are performed in families,
we use linear algebra over a suitable ring. It will become clear later on
that in case of a continuous family of conjugacy classes, say parameterised
by a variable $\lambda$, one will first
need to pass to the field $\RR(\lambda)$ of rational functions,
consider $\u\fk$ as a Lie algebra over $\RR(\lambda)$,
then perform the construction of a Kuranishi family $M_{\u\fk}$
\emph{over} $\RR(\lambda)$, and further work in the ring of functions
$\RR(\lambda)[M_{\u\fk}]$ when computing algebraic Cartan connections.
We do not devote any more space to the exploration of these issues.

Let us recall a standard trick facilitating the computation of Lie algebra cohomology:
given a diagonalizable subalgebra $\fa\subset\u\fk$, we may restrict all considerations to the
sub-complex of $C^\bullet(\u\fk,\u\fk)$ annihilated by $\fa$: its $\fa$-invariant
complement is homotopic
to zero (this had been pointed out to me by Ian Anderson).
Now, the Nijenhuis-Richardson bracket restricts to this sub-complex, and we thus have
a sub-DGLA $\fL(\u\fk)^\fa$ quasi-isomorphic to $\fL(\u\fk)$. 
In particular, $H^1(\fL(\u\fk)) = H^1(\fL(\u\fk)^\fa)$ and
a Kuranishi family constructed for
$\fL(\u\fk)^\fa$ provides a Kuranishi family for $\fL(\u\fk)$ itself.
We 
also note that if $H^2_i(\fL(\u\fk))=0$ for all $i\ge 2$,
then $M_{\u\fk}$ is the entire $H^1(\fL(\u\fk))$ (there are
no obstructions). In terms of Lie algebra cohomology,
we have $$H^{3,2}(\u\fk,\u\fk)=0 \implies M_{\u\fk} = H^{2,1}(\u\fk,\u\fk).$$

\begin{thm}
The following algorithm yields a complete list, without repetitions,
of all non-flat, at least $2$-transitive homogeneous models of $C_3$ Monge 
geometries with vanishing scalar component of the harmonic curvature.
First find the set of
$G_0$-conjugacy classes of graded subalgebras $\fg_- \subset \u\fk\subset \fg$
such that $\u\fk_0$ is at least two-dimensional, and
annihilates a nonzero element of the quintic component of $H_2(\fp_+,\fg)^1$.
Then, for each class perform the following sequence:
\begin{enumerate}
\item Fix a representative $\u\fk$ and 
compute a Kuranishi family $(M_{\u\fk}, \xi_{\u\fk})$.
\item Compute the family $(\fk_m)_{m\in M_{\u\fk}}$
of filtered deformations of $\u\fk$ defined by $\xi_{\u\fk}$.
\item Find a family of regular, normal Cartan connections
$\bar\omega_m : \fk_m \to \fg$, $m \in M_{\u\fk}$.
\item 
Compute the algebraic map
$$\kappa_H : M_{\u\fk} \to H_2(\fp_+,\fg)^1$$
such that $\kappa_H(m)$ is the harmonic curvature of $\omega_m$.
\item
Compute the Zariski upper-semicontinuous map
$$ d : M_{\u\fk} \to \ZZ $$
such that $d(m)$ is the symmetry dimension of $\omega_m$ (Lemma \ref{lem:sym}).
\item
Let $M_{\u\fk}' \subset M_{\u\fk}$ be the algebraic subset cut out
by the scalar component of $\kappa_H$, and $M_{\u\fk}'' \subset M_{\u\fk}'$
its (possibly empty) intersection with the Zariski-open subset on which $d = \dim\u\fk$.
\item
Compute the set-theoretic quotient $M_{\u\fk}''/G_0^{\u\fk}$
and append its points to the list of homogeneous models.
\end{enumerate}
\end{thm}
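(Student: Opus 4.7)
The strategy is to trace the classification problem through the chain of equivalences $\Germ \hookrightarrow \GermSym \approx \Def \approx \Cartan^a_n$ and then, fibrewise over $\Sub$, through the Kuranishi description of Lemma \ref{lem:bM} together with the harmonic curvature and symmetry dimension computations of Lemmas \ref{lem:curv-fun} and \ref{lem:sym}.

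First I would check that every germ meeting the hypotheses falls within the enumeration of Step 1. Let $\sD$ be such a germ. Lifting to $(\sD,\sym\sD)$ in $\GermSym$ via the right adjoint of Lemma \ref{lem:germsym}, and transporting across the equivalences of Lemma \ref{lem:gs-def}, Proposition \ref{pro:def-mc}, and Proposition \ref{pro:acc}, yields an object $(\fk,\bar\omega)$ of $\Cartan^a_n$ with $\fk \simeq \sym\sD$ as filtered Lie algebras. Writing $\u\fk = \gr\bar\omega(\gr\fk)$, 2-transitivity reads $\dim\u\fk_0 \ge 2$. By Lemma \ref{lem:curv-inv} the curvature $\bar\kappa$ is annihilated by $\u\fk_0 \subset \fg_0$, and hence so is its homology class $\bar\kappa_H$; non-flatness together with scalar vanishing makes $\bar\kappa_H$ a nonzero element of the quintic component, so $\u\fk_0$ annihilates a nonzero quintic. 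Hence the $G_0$-conjugacy class of $\u\fk$ is one of those enumerated in Step 1.

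Next I would identify the algorithm's output within each such fibre. Corollary \ref{cor:prol} gives $H^0(\fL(\u\fk))=0$, so Lemma \ref{lem:bM} yields an equivalence $\Def^*_{\u\fk} \approx M_{\u\fk} \sslash G_0^{\u\fk}$, realised on objects by the family $(\fk_m, \bar\omega_m)$ of Steps 2--3. By Lemma \ref{lem:curv-fun}, the scalar component of the harmonic curvature is a $G_0^{\u\fk}$-equivariant algebraic function on $M_{\u\fk}$, whose zero locus is precisely $M'_{\u\fk}$. By Lemma \ref{lem:sym}, $d(m)$ equals the symmetry dimension of the associated germ $\sD_m$, so the condition $d(m) = \dim\u\fk$ translates into $\fk_m = \sym\sD_m$, i.e.\ into $(\sD_m,\fk_m)$ being in the essential image of the right adjoint of Lemma \ref{lem:germsym}. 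Therefore $M''_{\u\fk}/G_0^{\u\fk}$ parametrises exactly the isomorphism classes in $\Germ$ whose associated graded is $\u\fk$, with vanishing scalar invariant, and non-flat: a flat $\sD_m$ would force $\sym\sD_m = \fg$ and hence $\u\fk = \fg$, but then $\u\fk_0 = \fg_0$ annihilates no nonzero element of the nontrivial irreducible $\fg_0$-module given by the quintic component, contradicting Step 1.

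Finally, the disjoint union of the sets $M''_{\u\fk}/G_0^{\u\fk}$ over a system of representatives of the relevant conjugacy classes is the complete list without repetitions: completeness is the content of the second paragraph, and disjointness across classes follows because the $G_0$-conjugacy class of $\gr\bar\omega(\gr\fk)$ is an invariant of $(\fk,\bar\omega)$ in $\Cartan^a_n$, read off from the forgetful functor $\Def \to \Sub$. The hard part will be the symmetry dimension step: $d$ is Zariski upper-semicontinuous, and is computed at generic points of complex irreducible components of $M_{\u\fk}$; a component is rejected wholesale whenever $d > \dim\u\fk$ there, and on a surviving component one must still contend with a proper closed sublocus where $d$ jumps. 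The paper's convention is to permit the stated families to contain finitely many exceptional points whose symmetry exceeds the generic one and are therefore already covered by some $M_{\u\fk'}$ with strictly larger $\u\fk'_0$; making this recursive cleanup rigorous, rather than a merely generic statement, is precisely what the paper defers.
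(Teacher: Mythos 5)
Your proposal is correct and follows essentially the same route as the paper's proof: the chain $\Germ \hookrightarrow \GermSym \approx \Def \approx \Cartan^a_n$, reduction to representatives of $G_0$-conjugacy classes, the fibrewise identification $\Def^*_{\u\fk} \approx M_{\u\fk}\sslash G_0^{\u\fk}$ via Corollary~\ref{cor:prol} and Lemma~\ref{lem:bM}, and the curvature and symmetry-dimension criteria of Lemmas~\ref{lem:curv-fun} and~\ref{lem:sym}. You even spell out the point the paper leaves to its final ``careful examination'' (that flat deformations are automatically expelled from $M''_{\u\fk}$ by the symmetry-dimension condition); the only ingredients you use tacitly rather than by name are Lemma~\ref{lem:prol}, which is what turns the $2$-transitivity hypothesis $\dim\fk^0\ge 2$ into $\dim\u\fk_0\ge 2$, and Lemma~\ref{lem:lift-iso}, which justifies that restricting to one representative per conjugacy class loses no isomorphism classes.
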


\begin{proof}
Our goal is to describe the subset of $\pi_0(\Germ)$
corresponding to non-flat, at least $2$-transitive models
with vanishing scalar component of the harmonic curvature.
According to Lemmas \ref{lem:germsym} and \ref{lem:gs-def}
this is equivalent to describing the set of isomorphism classes
of objects of the sub-groupoid of $\Def$ consisting of objects $(\fk,\iota)$
satisfying the following properties:
\begin{enumerate}
\item the image of $(\fk,\iota)$ in $\Sub$
is of the form $\u\fk = \fg_- \oplus \u\fk_0$
where $\u\fk_0$ is at least $2$-dimensional and annihilates a non-zero
element of the quintic component of $H_2(\fp_+,\fg)^1$,
\item the image of $(\fk,\iota)$ in $H_2(\fp_+,\fg)^1 \sslash G_0$
is a non-zero element of the quintic component of $H_2(\fp_+,\fg)^1$,
\item the symmetry dimension of 
$(\fk,\iota)$ equals the dimension of $\u\fk$.
\end{enumerate} 
By Proposition \ref{pro:def-mc} and Lemma \ref{lem:lift-iso}, 
it is enough to restrict to a set of representatives
of $G_0$-conjugacy classes of graded subalgebras $\u\fk = \fg_- \oplus \u\fk_0$,
and thus to a set of representatives of conjugacy classes
of subalgebras $\u\fk_0 \subset \fg_0$. 

More explicitly, denote by $\Sigma$
a set consisting of precisely one representative for each conjugacy
class of subalgebras $\u\fk_0 \subset \fg_0$ such that $\dim\u\fk_0 \ge 2$
and $\u\fk_0$ annihilates a non-zero element in the quintic component of
$H_2(\fp_+,\fg)^1$. 
Then the desired subset of $\pi_0(\Germ)$
may be identified with the set of isomorphism classes
of objects $(\fk,\iota)$ of $\Def$ such that:
\begin{enumerate}
\item $\iota(\fk)_0 \in \Sigma$,
\item (as above)
\item the symmetry dimension of $\fk$ equals $\dim\fk$.
\end{enumerate}
Furthermore, since no two subalgebras in $\Sigma$
are conjugate, any isomorphism between two such objects
induces an isomorphism of their images in $\Sub$, and thus
is necessarily a morphism of $\Def^*_{\fg_-\oplus\u\fk_0}$ for some
$\u\fk_0 \in \Sigma$.
Now, for each $\u\fk_0 \in \Sigma$,
letting $\u\fk = \fg_- \oplus \u\fk_0$, we have
$\Def^*_{\u\fk} \approx M_{\u\fk} \sslash G_0^{\u\fk}$
with a Kuranishi family $(M_{\u\fk},\xi_{\u\fk})$ (Corollary \ref{cor:prol} and
Lemma \ref{lem:bM}). Hence the desired subset of $\pi_0(\Germ)$
may be identified with the subset
$$ \sM \subset \coprod_{\u\fk_0 \in \Sigma} M_{\fg_- \oplus \u\fk_0} / G_0^{\fg_- \oplus \u\fk_0} $$
consisting of points $m \in M_{\u\fk}$ such that:
\begin{enumerate}
\item $\bar\kappa_{H,m}$ is a non-zero element of the quintic component of $H_2(\fp_+,\fg)^1$,
\item the symmetry dimension of $(\fk_m, \bar\omega_m)$ computed by Lemma \ref{lem:sym}
equals $\dim\u\fk$, 
\end{enumerate} 
where $\fk_m$ is the deformed filtered Lie algebra structure on $\u\fk$ defined
by $\xi_{\u\fk}(m)$, and $\bar\omega_m : \fk_m \to \fg$ is a regular, normal algebraic
Cartan connection with harmonic curvature $\bar\kappa_{H,m}$.
A careful examination of the steps (1)-(7) of the algorithm shows that
its outcome is precisely $\sM$ above.
\end{proof}

One may view the above Theorem as reducing the classification problem
of homogeneous models to the classification problem of subalgebras
$\u\fk_0 \subset \fg_0$ with suitable properties. The success of this approach
relies thus on being able to solve the latter task. In general, it is completely
hopeless; here however, since our $\fg_0$ has semisimple rank $1$, everything is
as simple as the representation theory of $\fsl(2,\RR)$.

\section{Application}

\subsection{Subalgebras}
We are ready to begin implementing the above algorithm. Explicit
calculations (cohomology, connections, symmetries) are
performed using Ian Anderson's {\tt DifferentialGeometry} package
for {\sc Maple}.
Recall that $\fg_0 \simeq \fsl(2,\RR) \oplus
\RR^2$. Let $H,X,Y$ be a standard basis in the $\fsl(2,\RR)$ factor,
with $[H,X]=2X$, $[H,Y]=-2Y$ and $[X,Y]=H$. 
We may choose a basis $E, E'$ in the abelian factor $\RR^2$ so that
$E$ is the grading element in $\fg$, while $E'$ annihilates the
abelian subalgebra $\fa\subset\fg_{-1}$ and acts as identity on $\fx$.
Then, the quintic component in $H_2^1(\fp_+,\fg)$ has weight $1$ for $E$ and $0$
for $E'$. As a representation of $\fsl(2,\RR)$, it is identified with the space
of quintic polynomials in $\RR[z,w]$. We list $G_0$-conjgacy classes
of nonzero elements with at least two-dimensional stabiliser:
\begin{center}\begin{tabular}{r|cl}
label & quintic & stabiliser \\
\hline
$N$ & $z^5$ & $\langle X,H-5E,E'\rangle$ \\
$IV$ & $z^4 w$ & $\langle H-3E,E'\rangle$ \\
$F$ & $z^3 w^2$ & $\langle H-E,E'\rangle$.
\end{tabular}\end{center}
\begin{lem}
The following is a one-to-one enumeration of
$G_0$-conjgacy classes of graded subalgebras $\fg_- \subset \u\fk\subset\fg$
such that $\dim\u\fk \ge 10$ and $\u\fk$ preserves a nonzero element in the quintic
component of the harmonic curvature module: 
\begin{center}\begin{tabular}{l|l}
label & $\u\fk_0$ \\
\hline
$N_3$ & $\langle X,H-5E,E'\rangle$   \\
$N_{2a}^\lambda$, $\lambda\in\RR\PP^1$ & $\langle X, \lambda_0(H-5E)+\lambda_1 E'\rangle$ \\
$N_{2b}$ & $\langle H-5E,E'\rangle$ \\
$IV_2$ & $\langle H-3E,E'\rangle$ \\
$F_2$ & $\langle H-E,E'\rangle$.
\end{tabular}\end{center}
In each case, $\u\fk = \fg_- + \u\fk_0$.
\end{lem}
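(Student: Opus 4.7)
The plan proceeds by reducing to a classification of subalgebras of $\fg_0$, stratifying by the orbit-type of the stabilised quintic, and enumerating within each stratum. First, I apply Lemma~\ref{lem:prol}: since $\u\fk_0$ is contained in the $\fg_0$-stabiliser of any quintic it annihilates, the hypothesis of the present lemma feeds directly into the previous one, giving $\u\fk_i = 0$ for $i > 0$. Hence $\u\fk = \fg_- \oplus \u\fk_0$, the condition $\dim\u\fk \ge 10$ becomes $\dim\u\fk_0 \ge 2$, and the problem reduces to enumerating, up to $G_0$-conjugacy, subalgebras $\u\fk_0 \subset \fg_0$ of dimension at least two annihilating some nonzero element of the quintic module $W$.

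The $\fg_0$-stabiliser of any such quintic $v$ has dimension $\ge 2$, so $v$ lies in one of the orbits $N$, $IV$, $F$ tabulated above. Fix representatives $z^5, z^4 w, z^3 w^2$ with Lie-algebra stabilisers $\fs_N, \fs_{IV}, \fs_F$ and corresponding stabiliser groups $S_N, S_{IV}, S_F \subset G_0$. A weight-space inspection shows that the annihilator of $\fs_?$ in $W$ is precisely $\langle v \rangle$ in each of the three cases. Using the $G_0$-invariance of the annihilator line, this allows one to replace $\u\fk_0$ by a $G_0$-conjugate sitting inside $\fs_?$ for a uniquely determined orbit type $?$; hence the enumeration of $G_0$-conjugacy classes reduces to enumerating $S_?$-conjugacy classes of subalgebras of $\fs_?$ for each $? \in \{N, IV, F\}$.

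It remains to enumerate subalgebras of dimension $\ge 2$ in each stabiliser. Both $\fs_{IV}$ and $\fs_F$ are $2$-dimensional abelian, so the only candidate of dimension $\ge 2$ is the whole stabiliser, giving $IV_2$ and $F_2$. The $3$-dimensional $\fs_N = \langle X, H-5E, E'\rangle$ has $[H-5E, X] = 2X$ with all other brackets zero. The only $3$-dimensional subalgebra is $\fs_N$ itself, giving $N_3$. A $2$-dimensional subalgebra containing $X$ has, after absorbing any $X$-component of a second generator, the form $\langle X, \lambda_0(H-5E) + \lambda_1 E'\rangle$ for some line $[\lambda_0 : \lambda_1] \in \RR\PP^1$, and every such span is closed under the bracket, giving the family $N_{2a}^\lambda$. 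A $2$-dimensional subalgebra not containing $X$ projects isomorphically onto $\fs_N/\langle X\rangle$, so equals $\langle H-5E + aX, E' + bX\rangle$; closure under the bracket forces $b = 0$, and conjugation by a suitable $\exp(tX) \in S_N$ eliminates $a$, yielding the unique class $N_{2b} = \langle H-5E, E'\rangle$.

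Finally, to show that these five classes are pairwise non-conjugate: the orbit-type of the annihilator line in $W$ separates the three $N$-classes from $IV_2$ and $F_2$; the dimension isolates $N_3$; the presence of the nilpotent element $X$ distinguishes the $N_{2a}^\lambda$ from $N_{2b}$. The most delicate step, and the one I expect to be the main obstacle, is separating the members of the continuous family $N_{2a}^\lambda$ from one another and from $N_{2b}$. For this I plan to exploit that $G_0$ acts trivially on the center $\langle E, E'\rangle \subset \fg_0$, so the image of $\u\fk_0$ under the projection $\fg_0 \to \langle E, E'\rangle$ is a $G_0$-conjugacy invariant; for $N_{2a}^\lambda$ this image is the line $\langle -5\lambda_0 E + \lambda_1 E'\rangle$, while for $N_{2b}$ it is the entire center, which together supply the required separation.
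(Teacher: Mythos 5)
Your proof is correct and takes essentially the same route as the paper: reduce via Lemma~\ref{lem:prol} to classifying subalgebras $\u\fk_0\subset\fg_0$ of dimension at least $2$ lying in the stabilisers of the three quintic types, the $IV$ and $F$ stabilisers being already two-dimensional, and then enumerate the subalgebras of $\langle X,H-5E,E'\rangle$, where your split by whether $X\in\u\fk_0$ is just a variant of the paper's semisimple/nilpotent case distinction. You are in fact somewhat more thorough than the paper on the ``one-to-one'' claim, supplying explicit conjugacy invariants (orbit type of the annihilated quintic, dimension, presence of a nilpotent element, and the projection to the centre $\langle E,E'\rangle$) where the paper leaves pairwise non-conjugacy implicit.
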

\begin{proof}
The latter equality holds by Lemma \ref{lem:prol}, whence it 
remains
to find $G_0$-conjugacy classes of two-dimensional subalgebras $\u\fk_0$ in 
$\langle X, H-5E, E'\rangle \subset \fg_0$. 
Choosing an element
$Z \in \u\fk_0$ we have, up to conjugation, either $Z \in \langle H-5E,E'\rangle$ (semisimple)
or $Z=X$ (nilpotent). 
Thus, either $\u\fk_0 = \langle H-5E,E'\rangle$ or
it is spanned by $X$ and a one-dimensional subspace in $\langle H-5E,E'\rangle$.
\end{proof}

We thus have four discrete classes: $N_3, N_{2b}, IV_2, F_2$,
as well as a one-parameter family $N^\lambda_{2a}$, $\lambda\in\RR\PP^1$.
When computing the Kuranishi families and algebraic Cartan connections
for classes in the latter family, one needs to take care of the parameter $\lambda$.
We will first discuss the discrete classes, as the computations are straightforward
there. 

\subsection{Discrete cases}
Recall that the Kuranishi family for the
deformation problem associated with a given $\fg_-\subset\u\fk\subset\fg$
will be an algebraic subset  $M_{\u\fk} \subset H^{2,1}(\u\fk,\u\fk)^{\u\fk\cap\fh}$ 
together with a representing
map $M_{\u\fk} \to C^{2,1}(\u\fk,\u\fk)^{\u\fk\cap\fh}$ sending a point of $M_{\u\fk}$
to a suitable cochain $\phi$ satisfying the Maurer-Cartan condition. In the particular case of
$H^{3,2}(\u\fk,\u\fk)=0$, we have $M_{\u\fk} = H^{2,1}(\u\fk,\u\fk)$;
it will be the case for the four discrete classes. Indeed, we list the `Betti numbers'
$b^i_j = H^i_j(\u\fk,\u\fk)$:
\begin{center}\begin{tabular}{l|ll}
label & $b^2_j$, $j\ge 1$ & $b^3_j$, $j\ge 2$ \\
\hline
$N_3$ & $1,0,\dots$ & $0,\dots$ \\
$N_{2b}$ & $1,0,\dots$ & $0,\dots$ \\
$IV_2$ & $2,0,\dots$ & $0,\dots$ \\
$F_2$ & $2,0,\dots$ & $0,\dots$
\end{tabular}\end{center}
Thus, in all four cases the Kuranishi family is the entire affine space
of dimension $1$ or $2$.
Denoting the coordinates by $t$ for
$N_3,N_{2b}$ and by $t,s$ for $IV_2, F_2$,
we shall write down the representing map as a
deforming cocycle $\phi(t)$ or $\phi(t,s)$, a polynomial with
coefficients in $C^2(\u\fk,\u\fk)$. 

Let us introduce a convenient description of $\fg_-$. Recalling
$\fg_0 \simeq \fsl(2,\RR) \oplus \langle E,E'\rangle$, we have
identifications 
\begin{eqnarray*}
\fg_{-1} &\simeq& \RR^2[-1,0] \oplus \RR[-1,1] \\
\fg_{-2} &\simeq& \RR^2[-2,1] \\
\fg_{-3} &\simeq& (S^2\RR^2)[-3,1]
\end{eqnarray*}
where $\RR^d[\lambda,\lambda']$ denotes the irreducible $U(\fg_0)$-module
of rank $d$
on which $E$ acts with weight $\lambda$, and $E'$ with weight $\lambda'$. 
The structure equations are essentially determined
by the requirement that they be homomorphisms of $U(\fg_0)$-modules;
normalization of the intertwiners may be absorbed into the above
identifications. The $U(\fg_0)$-module
$\fg_{-1}^* \otimes \fg_{-3}^* \otimes \fg_{-3}$
contains a unique copy of $S^5\RR^2[1,0]$, identified with the quintic
component of $H^2(\fg_-,\fg)$. We let $\upsilon^N \in S^5\RR^2[1,0]$
be a non-zero vector annihilated by $X$, and set
$\upsilon^{IV} = Y\upsilon^N$, $\upsilon^F = Y^2\upsilon^N$. These
are the weight vectors in $S^5\RR^2[1,0]$ corresponding to the types
$N$, $IV$ and $F$. Using the $U(\fg_0)$-equivariant projections,
we may view them as maps $\Lambda^2\fg_-\to\fg_-$, and thus as
elements of $C^2(\u\fk,\u\fk)$ via the inclusion $\fg_-\subset\u\fk$. 

The representing map for $N_3$ and $N_{2b}$
may be immediately written as
$$
\phi(t) = t \upsilon^N.
$$
For types $IV_2$ and $F_2$,
we find a pair $\upsilon',\upsilon'' \in C^2(\u\fk,\u\fk)^{\u\fk\cap\fh}$
representing a basis for $H^2(\u\fk,\u\fk)$,
and sharing the same $U(\fh)$-weight as $\upsilon^{IV}$, resp. $\upsilon^{F}$.
For $IV_2$, all Richardson-Nijenhuis brackets involving $\upsilon',\upsilon''$
vanish and the representing map is
$$
\phi(t,s) = t\upsilon' + s\upsilon''.
$$
For $F_2$, there is an additional element $\upsilon''' \in C^2(\u\fk,\u\fk)^{\u\fk\cap\fh}$
such that $d\upsilon''' + \frac{1}{2}[\upsilon',\upsilon'']$, and
the representing map is
$$
\phi(t,s) = t\upsilon' + s\upsilon'' + st\upsilon'''.
$$ 
The other brackets are trivial.
Note that in both $IV_2$ and $F_2$, $\phi(t,s)$ is annihilated
by entire $\u\fk_0$.

We thus have cochains $\phi(t)$ or $\phi(t,s)$ defining
one- or two-parameter families of filtered Lie algebras
deforming $\u\fk$ for each of the four discrete classes.
The next step is to compute the regular, normal algebraic
Cartan connection for every such deformation: this
is done in terms of linear algebra over $\RR(t)$ or $\RR(t,s)$
and yields a map $\bar\omega:\fk \to \fg$ whose coefficients
are polynomials in $t$ or $t,s$. One may then find the harmonic
curvature, which turns out to be contained in the quintic component
for all values of the parameters $t,s$. Hence, $M_{\u\fk}' = M_{\u\fk}$
in all four classes.

Further, one computes the
dimension of the symmetry algebra as in Lemma \ref{lem:sym}.
This involves finding kernels of certain matrices over
$\RR(t)$ or $\RR(t,s)$, and thus gives the symmetry dimension
for the \emph{generic} member of the deformation family
in each class. The results follow:
\begin{center}\begin{tabular}{r|cccc}
label & $N_3$ & $N_{2b}$ & $IV_2$ & $F_2$ \\
\hline
generic $d$ & 11 & 11 & 10 & 10.
\end{tabular}
\end{center}
It turns out that the generic members of
$N_3$, $IV_2$ and $F_2$ are the actual symmetry algebras
of the Monge geometries they define -- 
hence, $M_{\u\fk}''$ is a dense Zariski open subset of $M_{\u\fk}$.
On the other hand, by upper-semicontinuity,
we have $d\ge 11$ in type $N_{2b}$ so that the actual symmetry algebra is 
always larger than $\fk$ in that type. Accordingly,
$M_{\u\fk}'' = \emptyset$ for $N_{2b}$ and the entire family may be discarded.

We are left with three classes: $N_3$, $IV_2$ and $F_2$.
The next step is to identify the action of $G_0^{\u\fk} \subset G_0$ on
$M_{\u\fk}$ and describe the quotient $M''_{\u\fk}/G_0^{\u\fk}$. 
Recall that $G_0 \simeq \SL(2,\RR) \times (\RR^{\times})^2$.
Let $T \subset G_0$ denote the maximal torus with Lie algebra $\fh\subset\fg_0$.
The subalgebra $\u\fk \subset \fg$ is always $T$-invariant,
so that $T \subset G_0^{\u\fk}$.
Furthermore,
all constructions related to the Kuranishi family are $T$-equivariant, so that
$M_{\u\fk}$ and $M_{\u\fk}''$ are $T$-invariant subsets in $H^{2,1}(\u\fk,\u\fk)$,
and the representing map $\phi : M_{\u\fk} \to C^{2,1}(\u\fk,\u\fk)^{\u\fk\cap\fh}$
is $T$-equivariant. 
It follows that the $T$-action on $M_{\u\fk}$ factors through
the multiplicative action of $\RR^\times$ by means of the grading element
(recall that $\upsilon$ and $\upsilon'$ have the same weight for $U(\fh)$,
hence for $T$). In coordinates, both $t$ and $s$ transform under $\RR^\times$ with weight $1$.
The identity component $G_0^{\u\fk,0} \subset G_0^{\u\fk}$
and the associated quotient is then easy to describe:
\begin{center}\begin{tabular}{r|ccc}
label & $N_3$ & $IV_2$ & $F_2$ \\
\hline
$G_0^{\u\fk,0}$ & $T\ltimes\exp\langle X\rangle$ & $T$ & $T$ \\
$M_{\u\fk} / G_0^{\u\fk,0}$ & $\{0,*\}$ & $\{0\}\cup\RR\PP^1$ & $\{0\}\cup\RR\PP^1$ \end{tabular} 
\end{center}
where $0$ is the trivial deformation (hence flat), and $*$ is the unique non-flat model in $N_3$.

This is already the minimal possible quotient for $N_3$, so that
there is precisely one homogeneous model of type $N$ with $3$-dimensional
isotropy (up to equivalence). In the remaining types $IV_2$ and $F_2$, we 
note that $G_0^{\u\fk}$ is contained in the normaliser $N_{G_0}T$ of the torus $T$ in $G_0$,
while $G_0^{\u\fk,0}$ is simply its centraliser, $T$. Now, $N_{G_0}T/T \simeq \ZZ_2$,
the Weyl group of $\SL(2,\RR)$: its generator acts on $\fh$
sending $H$ to $-H$ and preserving both $E$ and $E'$.
Since $\u\fk_0$ is generated by $E'$ and a \emph{combination} of $H$ and $E$, it follows that
$G_0^{\u\fk}$ must actually preserve $H$, and thus coincides with the identity component
$T = G_0^{\u\fk}$ of $N_{G_0}T$.
An explicit computation of the algebraic Cartan connection shows that
the harmonic curvature of the geometry defined by $\phi(s,t)$ is proportional
to $s$, whence $s=0$ is the unique point on $\RR\PP^1 = (M_{\u\fk}/G_0^{\u\fk,0}) \setminus \{0\}$
corresponding to a flat Monge geometry.
Finally then, $M''_{\u\fk}/G_0{\u\fk,0}$ embeds onto a dense open in $\RR = \RR\PP^1 \setminus \{s=0\}$
parameterised by $t/s$.

\subsection{Continuous family case}
Let us now focus on the family $N^\lambda_{2a}$, $\lambda\in\RR\PP^1$.
Geometrically, one views the family of graded subalgebras $\u\fk_\lambda \subset \fg$
as a sub-bundle of the trivial vector bundle $\RR\PP^1 \times \fg$ over $\RR\PP^1$.
Correspondingly, the cochain complexes $C^\bullet(\u\fk_\lambda,\u\fk_\lambda)$
give a complex of vector bundles over $\RR\PP^1$. The differential and the
Nijenhuis-Richardson bracket become homomorphisms of vector bundles,
and in particular their fibre-wise images and kernels may have non-constant dimension.
In the abstract deformation setting, we then have a bundle of DGLAs $\bigcup_\lambda\fL_\lambda
\to\RR\PP^1$, and we wish to construct Kuranishi families fibrewise.
It is intuitively clear that, since all the data involved are of finite type,
there should be a finite subset $\Lambda \subset \RR\PP^1$ such that
the calculations may be performed:
\begin{itemize}
\item separately for each $\lambda\in\Lambda$, and
\item working over the field $\RR(\lambda)$ for the complement $\RR\PP^1\setminus\Lambda$. 
\end{itemize}
Indeed, the following provides the proper way to deal with $N_{2a}^\lambda$.
\begin{lem}\label{lem:family}
Identify $\RR\PP^1$ with the projectivization of $\fa=\langle H-5E,E'\rangle$, so that
$\lambda = (\lambda_0,\lambda_1)$ corresponds to the line spanned by
$\lambda_0(H-5E)+\lambda_1 E'$. 
Set $\fn = \fg_- \oplus \langle X \rangle$, so that  
$\u\fk_\lambda = \fn \oplus \lambda$ is the graded subalgebra of $\fg$
corresponding to $\lambda\in\RR\PP^1$. Let $\fL_\lambda = \fL(\u\fk_{\lambda})$ 
and let $\fL_\lambda^\fa \subset \fL_\lambda$
denote the sub-DGLA annihilated by $\lambda \subset \u\fk_\lambda$. Recall that
$\fL_\lambda^\fa$ and $\fL_\lambda$ are equipped with an additional grading
induced by that on $\u\fk_\lambda$.

Let $\Sigma \subset \fa^* \setminus \{0\}$ be the set
of nonzero weights of the $U(\fa)$-module $C^{\bullet,1}(\fn,\fn) \oplus C^{\bullet,1}(\fn)$,
and let $\Lambda \subset \RR\PP^1$
be the union of the zero-loci of the linear forms in $\Sigma$.
Then:\begin{enumerate}
\item 
$\bigcup_{\lambda\in\RR\PP^1\setminus\Lambda} \fL^\fa_\lambda$
forms a bundle of DGLA over $\RR\PP^1\setminus\Lambda$;
\item there is a DGLA
$\fM$ with additional grading, and an 
isomorphism
$$
\bigcup_{\lambda\in\RR\PP^1\setminus\Lambda} \fL^\fa_\lambda
\simeq
\fM \times (\RR\PP^1\setminus\Lambda)
$$
of bundles of DGLA over $\RR\PP^1\setminus\Lambda$,
compatible with the additional gradings.
\end{enumerate}
\end{lem}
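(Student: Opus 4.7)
The plan is to exploit the $\fa$-weight decomposition of the cochain spaces and show that, on $\fa$-invariants, the $\lambda$-dependence of the DGLA structure disappears up to twists by the tautological line bundle on $\RR\PP^1 = \PP(\fa)$.

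First, I would split $\u\fk_\lambda = \fn \oplus \lambda$ as an $\fa$-module: since $\fa$ is abelian and $\lambda \subset \fa$, the summand $\lambda$ is $\fa$-trivial, while $\fn$ decomposes into weight spaces whose nonzero weights (together with the induced weights on cochains) account for $\Sigma$. Propagating to cochains,
\begin{equation*}
C^{\bullet,1}(\u\fk_\lambda,\u\fk_\lambda) = C^{\bullet,1}(\fn,\fn) \oplus \bigl(C^{\bullet,1}(\fn)\otimes\lambda\bigr) \oplus \bigl(C^{\bullet-1,1}(\fn,\fn)\otimes\lambda^*\bigr) \oplus C^{\bullet-1,1}(\fn),
\end{equation*}
with $\fa$-weights determined entirely by the $\fn$-tensor factors. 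By construction of $\Lambda$, the hypothesis $\lambda\notin\Lambda$ is equivalent to: no nonzero $\mu\in\Sigma$ vanishes on a generator of $\lambda$; hence $\lambda$-annihilation in each summand coincides with $\fa$-invariance (zero weight). This identifies the underlying graded vector space of $\fL^\fa_\lambda$ as the sum of four $\fa$-invariant pieces, two of which come with a canonical $\lambda$, respectively $\lambda^*$, tensor factor.

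The central technical point is to verify that, upon restriction to these $\fa$-invariants, both the Chevalley-Eilenberg differential and the Nijenhuis-Richardson bracket are independent of $\lambda$. The only $\lambda$-sensitive component of the structure constants of $\u\fk_\lambda$ is $\mathrm{ad}_{Z_\lambda}:\fn\to\fn$ for a generator $Z_\lambda$ of $\lambda$, which acts by the weight function $\mu\mapsto\mu(Z_\lambda)$. Inserted into the differential it contributes two kinds of term, $[Z_\lambda,\phi(\ldots)]$ and $\phi(\ldots,[Z_\lambda,Y_i],\ldots)$, which on a weight-zero cochain cancel identically by the balance $\nu=\sum_i\mu_i$ between value and argument weights; an analogous cancellation takes place in the Nijenhuis-Richardson bracket. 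Hence both operations descend to $\lambda$-independent structure maps on the $\fa$-invariant summands, entirely determined by the intrinsic DGLA of $\fn$ together with its abstract extension by a trivial $\fa$-invariant line.

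Finally, to globalise, I would observe that as $\lambda$ varies the two twisted summands assemble into vector bundles over $\RR\PP^1=\PP(\fa)$ twisted by the tautological line bundle, respectively its dual, while the other two summands form trivial bundles. Since $\Sigma\neq\emptyset$ (already the $\fn$-grading yields nonzero $\fa$-weights), $\Lambda$ is a nonempty finite subset of $\RR\PP^1$, so the complement is a disjoint union of open intervals over which both line bundles admit nowhere-vanishing algebraic sections. Choosing such sections trivialises the bundle of graded vector spaces, and by the cancellation established above this simultaneously trivialises the DGLA structure, yielding the desired fixed DGLA $\fM$. The main obstacle is the weight-balance cancellation for both the differential and the bracket; once in hand, the remaining globalisation is a bookkeeping step.
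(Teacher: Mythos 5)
Your overall strategy mirrors the paper's (identify $\lambda$-annihilation with full $\fa$-invariance away from $\Lambda$, trivialise the tautological twists, then argue the DGLA structure is constant), but the central step is argued incorrectly and leaves a genuine gap. The two kinds of $\lambda$-dependent terms you list, $[Z_\lambda,\phi(\ldots)]$ and $\phi(\ldots,[Z_\lambda,Y_i],\ldots)$, are exactly the terms in which $Z_\lambda$ enters as an \emph{argument}; together they form the Lie derivative $L_{Z_\lambda}\phi$, and their cancellation is just the definition of the invariant subcomplex. What your enumeration misses is the appearance of $Z_\lambda$ on the \emph{value} side: for a cochain with a component in $C^{\bullet,1}(\fn)\otimes\lambda$ (or in $\lambda^*\otimes C^{\bullet-1,1}(\fn)\otimes\lambda$), the Chevalley--Eilenberg differential contains terms $[Y_i,\gamma(\ldots)Z_\lambda]=-\gamma(\ldots)\,\ad_{Z_\lambda}(Y_i)$ with $Y_i\in\fn$, i.e.\ a component of $d$ of the form $\gamma\otimes Z_\lambda\mapsto\pm\,\gamma\wedge\bigl(\ad_{Z_\lambda}|_\fn\bigr)$ landing in $C^{\bullet+1,1}(\fn,\fn)$. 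This is not part of any Lie derivative, it is manifestly linear in $\lambda$ in the chosen trivialisation, and no weight balance kills it: the operator $\ad_{Z_\lambda}|_\fn$ has $\fa$-weight zero, so it maps invariants to invariants. (A toy check: for $\fn$ abelian with basis $u,v,w$ of $\fa$-weights $(1,1),(-1,-1),(0,5)$, the invariant cochain $u^*\wedge v^*\otimes Z_\lambda$ has $\gamma\wedge\ad_{Z_\lambda}$ nonzero and genuinely $\lambda$-dependent.) So the asserted $\lambda$-independence of the differential on invariants does not follow from the cancellation you describe.

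The lemma is nevertheless true for the specific $\fn=\fg_-\oplus\langle X\rangle$ and $\fa=\langle H-5E,E'\rangle$, because in this case the problematic summands are absent from the invariant part: every nonzero scalar cochain in $C^{p}(\fn)$ with $p\ge 1$ has strictly negative $(H-5E)$-weight (all $\ad_{H-5E}$-eigenvalues on $\fn$ are strictly positive), hence $(C^{p,1}(\fn))^{\fa}=0$ for $p\ge1$, so an $\fa$-invariant element of $\fL_\lambda$ of positive graded degree has no $\lambda$-valued components at all, and the $\ad_{Z_\lambda}$-coupling terms never arise. You need to add this (or an equivalent) case-specific verification; the paper's proof likewise rests on an ``inspection'' of the structure of $\sL^\fa$ at this point, not on a general cancellation. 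A smaller remark: no cancellation at all is needed for the Nijenhuis--Richardson bracket, since it depends only on the underlying graded vector space; once the bundle of graded vector spaces is trivialised, only the differential $[\mu_\lambda,-]$ is $\lambda$-sensitive. Your globalisation step (triviality of the tautological bundle and its dual over the complement of the nonempty finite set $\Lambda$) is fine and agrees with the paper.
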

\begin{proof}
The result is more naturally cast in (real) algebro-geometric terms,
where the family $(\u\fk_\lambda)_\lambda$ is identified
with a coherent sheaf $\u\sK$ of graded Lie algebras,
and $(\fL_\lambda)_\lambda$ with a coherent sheaf $\sL$ of DGLA on $\RR\PP^1$.
Both are locally free as sheaves of $\OO_{\RR\PP^1}$-modules,
in particular $$\u\sK \simeq \fn\otimes\OO_{\RR\PP^1} \oplus \OO_{\RR\PP^1}(-1)$$
with the second summand in graded degree zero.
Now, $(\fL^\fa_\lambda)_\lambda$ corresponds to the kernel in the
exact sequence $$ 0 \to \sL^\fa \to \sL \to \sL \otimes \OO_{\RR\PP^1}(1) = \sL(1)$$
where the rightmost map is defined by the action
of $\OO_{\RR\PP^1}(-1) \subset \u\sK$ on $\sL$. Restricting over
$\RR\PP^1\setminus\Lambda$, the inclusion $\sL^\fa \to \sL$ becomes split,
so that in particular $\sL^\fa$ is the sheaf of sections of a vector sub-bundle
$\bigcup_{\lambda\notin\Lambda}\fL^\fa_\lambda \subset \bigcup_{\lambda\notin\Lambda}\fL_\lambda$.
Let us choose a local trivialization
$$
\u\sK|_{\RR\PP^1\setminus\Lambda} \simeq (\fn \oplus \RR) \otimes \OO_{\RR\PP^1\setminus\Lambda}
$$
as graded locally free sheaves. There is an induced
trivialization 
$$
\sL|_{\RR\PP^1\setminus\Lambda} \simeq \tilde\fM
\otimes \OO_{\RR\PP^1\setminus\Lambda},\quad
\tilde\fM=
C^{\bullet,1}(\fn,\fn) \oplus C^{\bullet-1,1}(\fn,\fn) \oplus 
C^{\bullet,1}(\fn)  \oplus C^{\bullet-1,1}(\fn)
$$
as doubly graded locally free sheaves. We claim that
there is a graded subspace $\fM \subset \tilde\fM$ equipped with a compatible
structure of a DGLA, 
such that the above restricts to an isomorphism
$$
\sL^\fa_{\RR\PP^1\setminus\Lambda} \simeq \fM \otimes \OO_{\RR\PP^1\setminus\Lambda}
$$
of DGLA with additional grading. In fact $\fM$ is precisely the subspace
$\tilde\fM^\fa \subset \tilde\fM$ annihilated by $\fa$. Indeed, it is clear that
$\tilde\fM^\fa \otimes \OO_{\RR\PP^1\setminus\Lambda}$ 
is contained in the image of $\sL^\fa_{\RR\PP^1\setminus\Lambda}$. Furthermore,
the top row in the diagram
$$\begin{CD}
0 @>>>
\tilde\fM^\fa \otimes \OO_{\RR\PP^1\setminus\Lambda} @>>>
\tilde\fM \otimes \OO_{\RR\PP^1\setminus\Lambda} @>>> 
\tilde\fM \otimes \OO_{\RR\PP^1\setminus\Lambda}(1) \\
@. @VVV @| @| \\
0 @>>> \sL^\fa|_{\RR\PP^1\setminus\Lambda} @>>> \sL|_{\RR\PP^1\setminus\Lambda} @>>> \sL(1)|_{\RR\PP^1\setminus\Lambda} 
\end{CD}$$
is exact, since the only elements in $\tilde\fM$ annihilated by
a subspace of $\fa$ corresponding to a point of $\RR\PP^1\setminus\Lambda$
are those annihilated by entire $\fa$. Hence, the leftmost vertical arrow is
an isomorphism. Finally, an inspection of the DGLA structure on $\sL^\fa$
shows that the induced family of DGLA structures on $\fM = \tilde\fM^\fa$ is constant.
\end{proof}

As a consequence, it is enough to compute the Kuranishi family 
for the finitely many deformations described by points of $\Lambda$,
and for the generic point of $\RR\PP^1$.
The set $\Lambda$ includes $\infty = (0:1)$, so that its
complement is an open subset of the affine line parameterised by $\lambda=\lambda_1/\lambda_0$.
The computations for
$N_{2a}^\lambda$, $\lambda\in\Lambda$ are performed as previously for the
discrete classes. Then, for generic $\lambda$, we compute the Kuranishi
family $M_\fM$ for the DGLA $\fM$ as in Lemma \ref{lem:family}. Considering
$\bigcup_\lambda C^{2,1}(\u\fk_\lambda,\u\fk_\lambda)$ as a vector bundle
over $\RR\PP^1$, the isomorphism of Lemma \ref{lem:family} gives the representing
section
$$
\Phi : M_\fM \times (\RR\PP^1\setminus\Lambda) \to \bigcup_{\lambda\notin\Lambda} C^{2,1}(\u\fk_\lambda,
\u\fk_\lambda)
$$
as an algebraic homomorphism of vector bundles over $\RR\PP^1\setminus\Lambda$. In fact,
$M_\fM$ is an affine line; we parameterise it with a coordinate $t$.
Since $\RR\PP^1\setminus\Lambda$ is contained in the affine line $\RR\PP^1\setminus\{\infty\}$,
the bundle $\bigcup_{\lambda\notin\Lambda} \u\fk_\lambda$ is trivial as
a vector bundle, and isomorphic to $(\fn\oplus\RR) \times (\RR\PP^1\setminus\Lambda)$. 
Using this trivialization, one checks that
the representing map, written as
$$
\Phi : \RR \times (\RR\PP^1\setminus\Lambda) \to (\Lambda^2(\fn\oplus\RR)^* \otimes (\fn\oplus\RR))
\times (\RR\PP^1\setminus\Lambda)
$$
is simply the constant lift of
$$ \phi : \RR \to \Lambda^2\fn^* \otimes\fn,\quad \phi(t) = t\upsilon^N. $$
Still, the Lie algebra structure in the fibres of $\bigcup_{\lambda\notin\Lambda}\u\fk_\lambda$
depends algebraically on $\lambda$, and so does the deformed structure. One then
computes a regular, normal algebraic Cartan connection as well as the symmetry
dimension for the \emph{generic} member in terms of linear algebra over $\RR(t,\lambda)$.
It turns out that the generic symmetry algebra is $11$-dimensional. Since
the dimension function 
$ d : \RR \times (\RR\PP^1\setminus\Lambda) \to \ZZ$
is Zariski upper-semicontinuous, it follows that $d\ge 11$ for all members of $N_{2a}^\lambda$, $\lambda\notin\Lambda$, whence the entire family may be discarded.

Turning to the finitely many models parameterised by $\Lambda$,
it turns out that the only value of $\lambda$ with 
nonempty $M_{\u\fk}''$ is $\lambda=\infty$. We thus omit the calculations for the other
elements of $\Lambda$, and focus on $N_{2a}^\infty$. The underlying graded subalgebra
of $\fg$ is $ \u\fk = \fg_- \oplus \langle X,E' \rangle$. 
The interesting `Betti numbers' read $b^2_1 = 3$, $b^2_2 = 1$, $b^2_i = 0$ for $i\ge 3$.
We introduce coordinates $t_1,t_2,t_3$ on $H^2_1(\u\fk,\u\fk)$ and
$s$ on $H^2_2(\u\fk,\u\fk)$.
It turns out that this time $H^{3,2}(\u\fk,\u\fk)$ is non-trivial and
the Kuranishi family $M_{\u\fk} \subset H^{2,1}(\u\fk,\u\fk)$ is the union
of two irreducible components: the hyperplane $t_3=0$ and
the line $t_1=t_2=s=0$. Computing the algebraic Cartan connection and
harmonic curvature for the generic points of the two components,
one checks that deformations parameterised by the one-dimensional component 
give rise to flat Monge geometries. On the other hand, the value of the
dimension function $d$ at the generic point of the three-dimensional component
is $d=10$. We also have $M'_{\u\fk}=M_{\u\fk}$ so that $M''_{\u\fk}$ is a dense
open subset of the three-dimensional irreducible component parameterised by $t_1,t_2,s$.
The curvature of our representative connection is harmonic and proportional to $t_1$,
so that in particular $M''_{\u\fk}$ is contained in the subset $t_1\neq0$.

We now need to pass to the quotient by the action of $G_0^{\u\fk}$. As before,
one easily describes $G_0^{\u\fk,0} = T \times \exp\langle X\rangle$. The
Kuranishi family and representing map are $T$-equivariant, and the weights
of $t_1,t_2,s$ under the action of $T$ are given by:
\begin{eqnarray*}
e^{aH} (t_1,t_2,s) &=& (e^{5a} t_1, e^a t_2, e^{2a}s)\\
e^{aE} (t_1,t_2,s) &=& (e^a t_1, e^a t_2, e^{2a}s)\\
e^{aE'} (t_1,t_2,s) &=& (t_1,t_2,s).
\end{eqnarray*}
One checks by explicit calculation that the remaining unipotent factor $\exp \langle X \rangle$
acts trivially on the three-dimensional component of $M_{\u\fk}$, i.e.
that the action of $X$ on the cochain parameterised by $t_1,t_2,s$
corresponds to the gauge action of a suitable element of $C^{1,1}(\u\fk,\u\fk)$.
Now, $M''_{\u\fk}$ is contained in the 
complement of $t_1=0$, whose image in $\RR^3/G_0^{\u\fk,0}$ may
be described by normalising $t_1=1$ and taking the quotient
of $\RR^2$, parameterised by $t_2,s$, by the rank one sub-torus $\RR^\times \subset T$
generated by $H-5E$. This latter torus acts on $t_2$ with weight $4$, and on
$s$ with weight $8$.
It follows that $M''_{\u\fk}/G_0^{\u\fk,0}$ is an open subset
of $\RR^2/\RR^{\times} \simeq \{*\} \cup (-\infty,\infty)\times\{\pm1\} \cup \{\pm\infty\}$,
where $*$ is the singleton orbit of $t_2=s=0$,
the endpoints $\pm\infty$ parameterise half-lines $t_2=0$, $s>0$ and
$t_2=0$, $s<0$, while $(-\infty,\infty)\times\{\pm1\}$ 
is the set of half-parabolas $s =\alpha t_2^2$, $t_2 = \epsilon$ with 
$\alpha\in\RR$, $\epsilon=\pm1$.
The point $*$,
corresponding to $t_1=1$, $t_2=s=0$, gives rise to a geometry with
$11$-dimensional symmetry of type $N_3$, thus $M''_{\u\fk}/G_0^{\u\fk,0} \subset 
(-\infty,\infty) \times \{\pm1\} \cup \{\pm\infty\}$. It is convenient to visualise
the latter space as two parallel copies of the real line, sharing the endpoints $\pm\infty$.
Finally, it is easy to observe that the full subgroup of $\SL(2,\RR)$ preserving
both $H$ and $\langle X\rangle$ is connected, and so is thus $G_0^{\u\fk}$. Hence,
$M''_{\u\fk}/G_0^{\u\fk}$ is a dense open subset of $(-\infty,\infty) \times \{\pm1\}
 \cup \{\pm\infty\}$, parameterised by $s/t_2^2$ and the sign of $t_2$.
An explicit computation of the symmetry algebra dimension for the endpoints $\pm\infty$
shows that they do belong to $M''_{\u\fk}/G_0^{\u\fk}$.

\subsection{Final result}
We thus arrive at the final list of homogeneous models. We express the structure
equations in terms of the exterior derivatives of the dual basis of left-invariant forms on a Lie group 
$K$ integrating $\fk$. The vertical distribution on $K \to K/L$ is the annihilator of
$\theta_1,\dots,\theta_8$, while the preimage in $TK$ of the Monge distribution is the annihilator
of $\theta_1,\dots,\theta_5$.
\begin{itemize}
\item Type $N_3$.
 \begin{eqnarray*}
 d\theta^{1} & = &   -2 \theta^{1}\wedge\theta^{9}  -2 \theta^{1}\wedge\theta^{11}  -2 \theta^{4}\wedge\theta^{6}\\
 d\theta^{2} & = &   - \theta^{1}\wedge\theta^{10}  -2 \theta^{2}\wedge\theta^{11}  - \theta^{4}\wedge\theta^{7}  - \theta^{5}\wedge\theta^{6}\\
 d\theta^{3} & = &   - \theta^{1}\wedge\theta^{6}  -2 \theta^{2}\wedge\theta^{10} + 2 \theta^{3}\wedge\theta^{9}  -2 \theta^{3}\wedge\theta^{11}  -2 \theta^{5}\wedge\theta^{7}\\
 d\theta^{4} & = &   -6 \theta^{4}\wedge\theta^{9}  -2 \theta^{4}\wedge\theta^{11} +  \theta^{6}\wedge\theta^{8}\\
 d\theta^{5} & = &   - \theta^{4}\wedge\theta^{10}  -4 \theta^{5}\wedge\theta^{9}  -2 \theta^{5}\wedge\theta^{11} +  \theta^{7}\wedge\theta^{8}\\
 d\theta^{6} & = &   4 \theta^{6}\wedge\theta^{9}\\
 d\theta^{7} & = &   - \theta^{6}\wedge\theta^{10} + 6 \theta^{7}\wedge\theta^{9}\\
 d\theta^{8} & = &   -10 \theta^{8}\wedge\theta^{9}  -2 \theta^{8}\wedge\theta^{11}\\
 d\theta^{9} & = & 0\\
 d\theta^{10} & = &   -2 \theta^{9}\wedge\theta^{10}\\
 d\theta^{11} & = & 0
\end{eqnarray*}
\item Type $N_{2a}^\infty$, $\epsilon=\pm1$, $\alpha\in\RR$ generic (interior).
 \begin{eqnarray*}
 d\theta^{1} & = &    \epsilon\theta^{1}\wedge\theta^{6}  -2 \theta^{1}\wedge\theta^{10}  -2 \theta^{4}\wedge\theta^{6}\\
 d\theta^{2} & = &   - \theta^{1}\wedge\theta^{9}  -2 \theta^{2}\wedge\theta^{10}  - \theta^{4}\wedge\theta^{7}  - \theta^{5}\wedge\theta^{6}\\
 d\theta^{3} & = &    \theta^{1}\wedge\theta^{6}  -2 \theta^{2}\wedge\theta^{9}  -\epsilon \theta^{3}\wedge\theta^{6}  -2 \theta^{3}\wedge\theta^{10}  -2 \theta^{5}\wedge\theta^{7}\\
 d\theta^{4} & = &   \alpha \theta^{1}\wedge\theta^{6}  -2 \theta^{4}\wedge\theta^{10} +  \theta^{6}\wedge\theta^{8}\\
 d\theta^{5} & = &   \alpha \theta^{2}\wedge\theta^{6}  - \theta^{4}\wedge\theta^{9}  - \epsilon\theta^{5}\wedge\theta^{6}  -2 \theta^{5}\wedge\theta^{10} +  \theta^{7}\wedge\theta^{8}\\
 d\theta^{6} & = & 0\\
 d\theta^{7} & = &   - \theta^{6}\wedge\theta^{9}\\
 d\theta^{8} & = &   2\alpha \theta^{4}\wedge\theta^{6} +  \epsilon\theta^{6}\wedge\theta^{8}  -2 \theta^{8}\wedge\theta^{10}\\
 d\theta^{9} & = &   \alpha \theta^{6}\wedge\theta^{7} +  \epsilon\theta^{6}\wedge\theta^{9}\\
 d\theta^{10} & = & 0
\end{eqnarray*}
\item Type $N_{2a}^\infty$, $\sigma=\pm1$ (boundary).
 \begin{eqnarray*}
 d\theta^{1} & = &   -2 \theta^{1}\wedge\theta^{10}  -2 \theta^{4}\wedge\theta^{6}\\
 d\theta^{2} & = &   - \theta^{1}\wedge\theta^{9}  -2 \theta^{2}\wedge\theta^{10}  - \theta^{4}\wedge\theta^{7}  - \theta^{5}\wedge\theta^{6}\\
 d\theta^{3} & = &    \theta^{1}\wedge\theta^{6}  -2 \theta^{2}\wedge\theta^{9}  -2 \theta^{3}\wedge\theta^{10}  -2 \theta^{5}\wedge\theta^{7}\\
 d\theta^{4} & = &   \sigma \theta^{1}\wedge\theta^{6}  -2 \theta^{4}\wedge\theta^{10} +  \theta^{6}\wedge\theta^{8}\\
 d\theta^{5} & = &   \sigma \theta^{2}\wedge\theta^{6}  - \theta^{4}\wedge\theta^{9}  -2 \theta^{5}\wedge\theta^{10} +  \theta^{7}\wedge\theta^{8}\\
 d\theta^{6} & = & 0\\
 d\theta^{7} & = &   - \theta^{6}\wedge\theta^{9}\\
 d\theta^{8} & = &   2\sigma \theta^{4}\wedge\theta^{6}  -2 \theta^{8}\wedge\theta^{10}\\
 d\theta^{9} & = &   \sigma \theta^{6}\wedge\theta^{7}\\
 d\theta^{10} & = & 0
\end{eqnarray*}
\item Type $IV_2$, $\alpha\in\RR$ generic.
 \begin{eqnarray*}
 d\theta^{1} & = &   -2 \theta^{1}\wedge\theta^{9}  -2 \theta^{1}\wedge\theta^{10}  -2 \theta^{4}\wedge\theta^{6}\\
 d\theta^{2} & = &   (2+2\alpha) \theta^{1}\wedge\theta^{6}  -2 \theta^{2}\wedge\theta^{10}  - \theta^{4}\wedge\theta^{7}  - \theta^{5}\wedge\theta^{6}\\
 d\theta^{3} & = &   2\alpha \theta^{2}\wedge\theta^{6} + 2 \theta^{3}\wedge\theta^{9}  -2 \theta^{3}\wedge\theta^{10}  -2 \theta^{5}\wedge\theta^{7}\\
 d\theta^{4} & = &   -4 \theta^{4}\wedge\theta^{9}  -2 \theta^{4}\wedge\theta^{10} +  \theta^{6}\wedge\theta^{8}\\
 d\theta^{5} & = &   \alpha \theta^{4}\wedge\theta^{6}  -2 \theta^{5}\wedge\theta^{9}  -2 \theta^{5}\wedge\theta^{10} +  \theta^{7}\wedge\theta^{8}\\
 d\theta^{6} & = &   2 \theta^{6}\wedge\theta^{9}\\
 d\theta^{7} & = &   4 \theta^{7}\wedge\theta^{9}\\
 d\theta^{8} & = &   -6 \theta^{8}\wedge\theta^{9}  -2 \theta^{8}\wedge\theta^{10}\\
 d\theta^{9} & = & 0\\
 d\theta^{10} & = & 0
\end{eqnarray*}
\item Type $F_2$, $\alpha\in\RR$ generic.
 \begin{eqnarray*}
 d\theta^{1} & = &   (1+2\alpha) \theta^{1}\wedge\theta^{6}  -2 \theta^{1}\wedge\theta^{9}  -2 \theta^{1}\wedge\theta^{10}  -2 \theta^{4}\wedge\theta^{6}\\
 d\theta^{2} & = &   \alpha \theta^{1}\wedge\theta^{7}  -2 \theta^{2}\wedge\theta^{10}  - \theta^{4}\wedge\theta^{7}  - \theta^{5}\wedge\theta^{6}\\
 d\theta^{3} & = &   2 \theta^{3}\wedge\theta^{9}  -2 \theta^{3}\wedge\theta^{10}  -2 \theta^{5}\wedge\theta^{7}\\
 d\theta^{4} & = &   \alpha \theta^{1}\wedge\theta^{6}  -2 \theta^{4}\wedge\theta^{9}  -2 \theta^{4}\wedge\theta^{10} +  \theta^{6}\wedge\theta^{8}\\
 d\theta^{5} & = &   2\alpha \theta^{5}\wedge\theta^{6}  -2 \theta^{5}\wedge\theta^{10} +  \theta^{7}\wedge\theta^{8}\\
 d\theta^{6} & = & 0\\
 d\theta^{7} & = &   2\alpha \theta^{6}\wedge\theta^{7} + 2 \theta^{7}\wedge\theta^{9}\\
 d\theta^{8} & = &   -4\alpha \theta^{6}\wedge\theta^{8}  -2 \theta^{8}\wedge\theta^{9}  -2 \theta^{8}\wedge\theta^{10}\\
 d\theta^{9} & = & 0\\
 d\theta^{10} & = & 0
\end{eqnarray*}
 \end{itemize}

\section{Comparison to the work of Anderson and Nurowski}
\subsection{}
We may now compare our results to those of Anderson and  Nurowski in~\cite{AN}. As we have
already remarked, the geometries they consider are restricted to type $N$,
but the classification is carried through all the way down to simply-transitive
models. Hence, their list intersects ours in the $11$-dimensional
model of type $N_3$, as well as in the one-parameter family of 
$10$-dimensional models $N^\infty_{2a}$. It is indeed possible to
find an explicit equivalence between the realisations written down
in the two papers.

\subsection{} Naturally, the possibility of such comparison prompts the question about
the advantages and disadvantages of each of the two methods, i.e. deformation
theory in our case and Cartan reduction in~\cite{AN}. We shall only contrast the
two techniques in a couple of aspects. First, we note that Cartan reduction,
if properly performed, gives a much more efficient algorithm in the sense
that it necessarily produces a complete list of inequivalent models. Furthermore,
since assumptions on the possible normalisations of structure functions are
built into the very core of the procedure, it is straightforward to restrict
the classification to a given curvature type (e.g. type $N$, as Anderson and Nurowski
have done), with a guarantee that no examples of other types are produced. 

Our method
gives no such control over the curvature; while we do use assumptions on the curvature
type to produce a list of classes of isotropy subalgebras $\u\fk_0 \subset \fg_0$,
it may well happen that a particular component of the Kuranishi space deforms the
flat algebra into a different curvature type that intended (this had been observed
for some of the $1$-transitive models). We are thus in general forced to filter our 
raw list of models. Likewise, when parameterising deformations belonging to a given
component of a Kuranishi space for some $\u\fk$, we ought to carefully identify (1) intersections
with other components, and (2) points that should be glued to a Kuranishi space
for a larger $\u\fk' \supset \u\fk$. For instance, comparing our family $N_{2a}^\infty$
to the one found by Anderson and Nurowski, we see that certain special values of
the parameter $\alpha$ should be excluded as defining models with a larger symmetry algebra
(recall that we compute the dimension of the symmetry algebra only at the generic point
of each irreducible component). 

By design, these problems do not arise in the
approach based on Cartan reduction. However, a careful reading
of~\cite{AN} shows that achieving such efficiency requires a great deal
of experience and technical skill. This is in contrast to the
deformation-theoretic approach, which comes with the advantage of
using standard cohomological tools susceptible to far-reaching automation,
and may be applied without a deep understanding of the invariants of a geometry
in question. In future applications one might perhaps use the deformation-theoretic
method algorithmically to produce an early survey of the terrain, only to be elaborated by means
of Cartan reduction.

\subsection{}
Let us finally
remark that our method may be in principle applied also to produce $1$-transitive homogeneous
models of the $C_3$ Monge geometry. The computations in that
case become too large to handle without first developing more subtle algorithms.
To give the reader a taste of the situation, let us mention that
the graded subalgebras $\u\fk \subset \fg$
in this case include the \emph{nilpotent} algebra $\fg_- \oplus \langle X\rangle$,
admitting a $10$-dimensional $H^{2,1}(\u\fk,\u\fk)$ and a Kuranishi family
with 27 irreducible components. Interestingly enough, all those components, and indeed
all the irreducible components of all Kuranishi spaces we have encountered in this
project, turn out to be rational.

\bibliographystyle{plain}
\bibliography{c3}

\begin{thebibliography}{1}

\bibitem{ANN}
Ian Anderson, Zhaohu Nie, and Pawel Nurowski.
\newblock Non-rigid parabolic geometries of {M}onge type.
\newblock {\em Advances in Mathematics}, 277:24--55, 2015.

\bibitem{AN}
Ian Anderson and Pawel Nurowski.
\newblock $\mathbf{Sp}(3,\mathbb{R})$ {M}onge geometries in dimension 8.
\newblock {\em preprint arXiv:1606.08675}, 2016.

\bibitem{Fialowski}
Alice Fialowski.
\newblock Deformations of {L}ie algebras.
\newblock {\em Matematicheskii Sbornik}, 169(4):476--482, 1985.

\bibitem{Kruglikov}
Boris Kruglikov.
\newblock Finite-dimensionality in {T}anaka theory.
\newblock In {\em Annales de l'Institut Henri Poincare (C) Non Linear
  Analysis}, volume~28, pages 75--90. Elsevier, 2011.

\bibitem{Manetti}
Marco Manetti.
\newblock Deformation theory via differential graded {L}ie algebras.
\newblock {\em arXiv preprint math/0507284}, 2005.

\bibitem{CS}
Andreas \v{C}ap and Jan Slov{\'a}k.
\newblock {\em Parabolic Geometries: Background and general theory}.
\newblock Number 154. American Mathematical Soc., 2009.

\end{thebibliography}

\end{document}